\newtheorem{theorem}{Theorem}[section]
\newtheorem{lemma}[theorem]{Lemma}
\newtheorem{proposition}[theorem]{Proposition}
\newtheorem{corollary}[theorem]{Corollary}
\newtheorem{prop-and-def}[theorem]{Proposition and Definition}
\theoremstyle{definition}
\newtheorem{definition}[theorem]{Definition}
\newtheorem{notation}[theorem]{Notation}
\newtheorem{remark}[theorem]{Remark}
\newtheorem{defrem}[theorem]{Definition and Remark}
\newtheorem{remnotation}[theorem]{Remark and Notation}
\newtheorem{ad-hoc}[theorem]{ }
\numberwithin{equation}{section}
\newcommand{\cA}{ {\mathcal A} }
\newcommand{\Dalg}{ {\mathcal D}_{\mathrm{alg}} }
\newcommand{\cM}{ {\mathcal M} }
\newcommand{\cP}{ {\mathcal P} }
\newcommand{\cS}{ {\mathcal S} }
\newcommand{\cV}{ {\mathcal V} }
\newcommand{\bC}{ {\mathbb C} }
\newcommand{\bN}{ {\mathbb N} }
\newcommand{\bR}{ {\mathbb R} }
\newcommand{\bZ}{ {\mathbb Z} }
\newcommand{\fM}{ {\mathfrak M} }
\newcommand{\fMcV}{ \fM_{\cV} }
\newcommand{\uBeta}{ \underline{\beta} }
\newcommand{\uKappa}{ \underline{\kappa} }
\newcommand{\uChi}{ \underline{\chi} }
\newcommand{\uPhi}{ \underline{\varphi} }
\newcommand{\uLambda}{ \underline{\lambda} }
\newcommand{\uPsi}{ \underline{\psi} }
\newcommand{\Abs}{ \mbox{Abs} }
\newcommand{\Cat}{ \mathrm{Cat} }
\newcommand{\Int}{ \mbox{Int} }
\newcommand{\Moeb}{ \mbox{M\"ob} }
\newcommand{\Out}{ \mbox{Out} }
\newcommand{\term}{ \mbox{term} }
\newcommand{\piodd}{ \pi^{(\mathrm{odd})} }
\newcommand{\sigmaeven}{ \sigma^{(\mathrm{even})} }
\newcommand{\Deltastar}{ \Delta^{*} }
\newcommand{\NCBopp}{ NC^{(B-opp)} }
\newcommand{\NCB}{ NC^{(B)} }
\newcommand{\boxplusc}{ \boxplus_{c} }
\newcommand{\boxplusB}{ \boxplus_{B} }
\newcommand{\freeprodc}{ \star_{c} }
\newcommand{\freeprodB}{ \star_{B} }
\newcommand{\ecdef}{ \stackrel{def}{\Longleftrightarrow} }
\newcommand{\cut}{\operatorname{cut}}
\newcommand{\attach}{\operatorname{attach}}
\title{A construction which relates c-freeness to infinitesimal freeness}
\author[M. Fevrier]{Maxime Fevrier}
\address{Maxime Fevrier: Laboratoire de Math\'ematiques d'Orsay,
Universit\'e Paris Sud, CNRS, 
Universit\'e Paris-Saclay, 91405 Orsay, France.}
\email{maxime.fevrier@u-psud.fr}
\author[M. Mastnak]{Mitja Mastnak}
\address{Mitja Mastnak: Department of Mathematics \& Computing Science, 
        Saint Mary's University, Halifax, Nova Scotia B3H 3C3, Canada.}
\email{mmastnak@cs.smu.ca}
\author[A. Nica]{Alexandru Nica}
\thanks{MM, AN: research supported by a Discovery Grant from 
	NSERC, Canada.}
\address{Alexandru Nica: Department of Pure Mathematics, 
	University of Waterloo, Ontario, Canada.}
\email{anica@uwaterloo.ca}
\author[K. Szpojankowski]{Kamil Szpojankowski}
\thanks{KSz: research partially suported by NCN grant 2016/23/D/ST1/01077}
\address{Kamil Szpojankowski:
	Faculty of Mathematics and Information Science,
	Warsaw University of Technology, Poland.}
\email{k.szpojankowski@mini.pw.edu.pl}
\begin{document}

\begin{abstract}
We consider two extensions of free probability that have been
studied in the research literature, and are based on the notions of
{\em c-freeness} and respectively of {\em infinitesimal freeness} for
noncommutative random variables.  In a 2012 paper, Belinschi and 
Shlyakhtenko pointed out a connection between these two frameworks, at
the level of their operations of 1-dimensional free additive 
convolution.  Motivated by that, we propose a construction which 
produces a multi-variate version of the Belinschi-Shlyakhtenko result, 
together with a result concerning free products of multi-variate 
noncommutative distributions. 
Our arguments are based on the combinatorics of the specific types
of cumulants used in c-free and in infinitesimal free probability.
They work in a rather general setting, where the initial data 
consists of a vector space $\cV$ given together with a linear map 
$\Delta : \cV \to \cV \otimes \cV$.  In this setting, all the needed 
brands of cumulants live in the guise of families of multilinear 
functionals on $\cV$, and our main result concerns a certain 
transformation $\Deltastar$ on such families of multilinear functionals.
\end{abstract}

\maketitle

\section{Introduction}

In this paper we study the relation between two extensions of the 
framework of free probability which have been considered in the research 
literature, and are based on the notions of {\em c-free independence} 
and respectively of {\em infinitesimal free independence} for
noncommutative random variables.  

$\ $

\subsection{Description of framework.}

$\ $

\noindent
We consider the plain algebraic framework of a 
``noncommutative probability space'', or {\em ncps} for short,
by which we simply understand a pair 
$( \cA , \varphi )$ with $\cA$ a unital algebra over $\bC$ and 
$\varphi : \cA \to \bC$ a linear functional having
$\varphi ( 1_{{ }_{\cA}} ) = 1$.  The fundamental concept of interest 
for us in this framework is the one of {\em free independence} for 
a family of unital subalgebras $\cA_1, \ldots , \cA_k \subseteq \cA$.  
A handy tool for the combinatorial study of free independence
is a sequence of multilinear functionals
$( \kappa_n : \cA^n \to \bC )_{n=1}^{\infty}$, called the 
{\em free cumulant functionals} associated to $( \cA, \varphi )$; 
indeed, the free independence of $\cA_1, \ldots , \cA_k$ can be 
conveniently re-phrased as a vanishing condition of the ``mixed'' 
free cumulants with entries in these algebras.  For a basic 
exposition of how free cumulants are used in the study of free 
independence, see e.g. Lecture 11 of \cite{NiSp2006}.

In this paper we consider two extensions of the free probability 
framework.

One of the extensions is to the framework of {\em c-free} 
(shorthand for {\em ``conditionally free''}) independence, which was 
initiated in \cite{BoSp1991, BoLeSp1996}, and has accumulated 
a fairly large amount of work since then (see e.g. 
\cite{BoBr2009, PoWa2011} and the references indicated there).  
Here one works with triples $( \cA , \varphi , \chi )$
where $( \cA , \varphi )$ is an ncps (as above) and 
$\chi : \cA \to \bC$ is an additional linear functional with
$\chi ( 1_{{ }_{\cA}} ) = 1$.  We will refer to such a triple 
$( \cA , \varphi , \chi )$ by calling it a {\em C-ncps}.  When 
dealing with a C-ncps, the fundamental concept of interest is
the one of {\em c-free independence} (with respect to $\varphi$ 
and $\chi$)  for a family of unital subalgebras 
$\cA_1, \ldots , \cA_k \subseteq \cA$; this amounts to asking 
that $\cA_1, \ldots , \cA_k$ are freely independent in the usual 
sense with respect to $\varphi$, and in addition, that a 
multiplicativity condition on $\chi$ is fulfilled for certain 
special products formed with elements from 
$\cA_1, \ldots , \cA_k$.  The paper \cite{BoLeSp1996} also 
introduced a recipe for how to define a family
$( \kappa^{(c)}_n : \cA^n \to {\mathbb C} )_{n=1}^{\infty}$ 
of {\em c-free cumulant functionals} associated to 
$( \cA , \varphi, \chi )$.  In terms of cumulants, the c-free 
independence of a family of unital subalgebras 
$\cA_1, \ldots , \cA_k \subseteq \cA$ is equivalent to: usual 
free independence of $\cA_1 , \ldots , \cA_k$ with respect to 
$\varphi$, plus a vanishing condition on the mixed 
$\kappa^{(c)}_n$ cumulants.  

The other extension we want to consider is to the framework of 
{\em infinitesimal free independence}.  This was introduced in 
\cite{BeSh2012}, with some earlier combinatorial ideas around
this topic appearing in \cite{BiGoNi2003}.  The literature on 
infinitesimal free probability is not extensive, but this is 
nevertheless a promising direction of research, e.g. due to 
its connections to random matrix theory (\cite{Sh2018}, see also
the discussion in \cite{Mi2018}).
In order to study infinitesimal free independence, one works 
with triples $( \cA , \varphi , \varphi ' )$
where $( \cA , \varphi )$ is an ncps (as above) and 
$\varphi ' : \cA \to \bC$ is an additional linear functional
such that $\varphi ' ( 1_{{ }_{\cA}} ) = 0$.  We will refer to 
such a triple $( \cA , \varphi , \varphi ' )$ by calling it 
an {\em I-ncps}.  In relation to an I-ncps, the fundamental 
concept of interest is the one of 
{\em infinitesimal free independence} (with respect to $\varphi$ 
and $\varphi '$) for a family of unital subalgebras 
$\cA_1, \ldots , \cA_k \subseteq \cA$; this amounts to asking that 
$\cA_1, \ldots , \cA_k$ are freely independent in the usual sense 
with respect to $\varphi$, and in addition, that a derivation-type 
condition on $\varphi '$ is fulfilled for certain 
special products formed with elements from $\cA_1, \ldots , \cA_k$.  
One has, as found in \cite{FeNi2010}, a natural way of defining 
a family $( \kappa_n ' : \cA^n \to \bC )_{n=1}^{\infty}$ of 
{\em infinitesimal free cumulant functionals} associated to 
$( \cA , \varphi, \varphi ' )$, obtained essentially by ``taking a 
formal derivative with respect to $\varphi$'' in the formulas 
describing the free cumulants of $\varphi$.  In terms of cumulants, 
the infinitesimal free independence of a family of unital subalgebras 
$\cA_1, \ldots , \cA_k \subseteq \cA$ is equivalent to: usual 
free independence of $\cA_1 , \ldots , \cA_k$ with respect to 
$\varphi$, plus a vanishing condition on the mixed 
$\kappa_n '$ cumulants.  

$\ $

\subsection{The map $\Psi$ from \cite{BeSh2012}, and its 
multivariate extension.}

$\ $

\noindent
The main objective of the present paper is to look for connections 
between the frameworks of c-free independence and of infinitesimal
free independence.  Our starting point is a result of Belinschi 
and Shlyakhtenko, Theorem 22 in \cite{BeSh2012}, which goes at 
the level of {\em distributions}.  
For our purposes here, distributions will be assumed to 
have finite moments of all orders, and will be viewed as 
linear functionals on the algebra of polynomials $\bC [X]$;  
the C-ncps and I-ncps considered in this context 
are thus of the form 
\begin{equation}   \label{eqn:1a}
( \bC [X], \mu , \nu ) \mbox{ and respectively }
( \bC [X], \mu , \mu ' ), 
\end{equation}
with $\mu, \nu , \mu ' : \bC [X] \to \bC$ linear such that 
$\mu (1) = \nu (1) = 1$, $\mu ' (1) = 0$.  For pairs 
$( \mu , \nu )$ and $( \mu , \mu ' )$ as in (\ref{eqn:1a})
one naturally defines operations of 
{\em free additive convolution}, denoted by $\boxplusc$ 
and 
\footnote{ The second of the two notations comes from
the fact that $\boxplusB$ is also known as 
``free additive convolution of type B''.}
respectively 
$\boxplusB$, which reflect the operation of adding c-free 
(respectively infinitesimally free) 
elements in a general C-ncps (respectively I-ncps).  

Theorem 22 of \cite{BeSh2012} gives a connection between the 
operations of free additive convolution $\boxplusc$ and 
$\boxplusB$.  It is phrased in terms of a certain map
\[
\Psi : \cM \to \cM '
\]
(introduced in the same paper \cite{BeSh2012}), 
where $\cM$ is a space of probability distributions on 
$\bR$ and $\cM '$ is a space of signed measures on $\bR$, 
with $\mu ' ( \bR ) = 0$, $\forall \, \mu ' \in \cM '$.  
For a $\nu \in \cM$ with compact support, the definition 
of $\mu ' := \Psi ( \nu ) \in \cM '$ can be given in the 
guise of a formula describing the moments of $\mu '$, namely:
\begin{equation}   \label{eqn:1b}
\int_{\bR} t^n \ d \mu ' (t) = n \beta_{n+1; \nu}, 
\ \ n \in \bN,
\end{equation}
where $( \beta_{n; \nu} )_{n=1}^{\infty}$ is the 
sequence of {\em Boolean cumulants} of $\nu$ (some 
siblings of free cumulants, which come from the parallel 
world of so-called ``Boolean probability''). 
Referring to the map $\Psi$, Theorem 22 of \cite{BeSh2012} 
can be stated as follows: if we consider probability measures 
$\mu_1, \nu_1, \mu_2, \nu_2 \in \cM$ and we put 
\begin{equation}   \label{eqn:1c}
( \mu_1 , \nu_1 ) \boxplusc 
( \mu_2 , \nu_2 ) = ( \mu , \nu ) ,
\end{equation}
then it follows that
\begin{equation}   \label{eqn:1d}
( \mu_1 , \Psi ( \nu_1 ) ) \boxplusB 
( \mu_2 , \Psi ( \nu_2 ) ) = ( \mu , \Psi ( \nu ) ) 
\in \cM \times \cM ' .
\end{equation}

Clearly, one can consider a plain algebraic version of the 
implication ``(\ref{eqn:1c}) $\Rightarrow$ (\ref{eqn:1d})'',
where $\cM$ and $\cM '$ are replaced with spaces of linear 
functionals on $\bC [X]$.  In the present paper we show that, 
in this algebraic version, the map $\Psi$ and the said 
implication 

\noindent
``(\ref{eqn:1c}) $\Rightarrow$ (\ref{eqn:1d})''
can be generalized to a multivariate framework where instead 
of $\bC [X]$ we use the algebra 
$\bC \langle X_1, \ldots , X_k \rangle$ of polynomials 
in non-commuting indeterminates $X_1, \ldots , X_k$.
More precisely: instead of $\cM$ and $\cM '$ we will use the 
spaces of ``distributions'' (in plain algebraic sense)
$\Dalg (k)$ and $\Dalg ' (k)$ defined by
\begin{equation}   \label{eqn:1e}
\left\{   \begin{array}{lll}
\Dalg (k)  & := & 
      \{ \mu : \bC \langle X_1, \ldots , X_k \rangle
      : \mu \mbox{ is linear, } \mu (1) = 1 \},       \\
           &    &                                     \\
\Dalg '(k) & := & 
      \{ \mu ' : \bC \langle X_1, \ldots , X_k \rangle
      : \mu ' \mbox{ is linear, } \mu ' (1) = 0 \},
\end{array}  \right.
\end{equation}
and we will use a map
\[
\Psi_k : \Dalg (k) \to \Dalg ' (k)
\]
defined as follows.

\begin{definition}   \label{def:11}
Let $k$ be a positive integer and let $\nu$ be in $\Dalg (k)$.
Then $\Psi_k ( \nu )$ is the linear functional 
$\mu ' \in \Dalg ' (k)$ determined by the requirement that
\begin{equation}  \label{eqn:11a}
\mu ' ( X_{i_1} \cdots X_{i_n} ) =
\sum_{m=1}^n \beta_{n+1; \nu} 
( X_{i_m}, \ldots , X_{i_n}, X_{i_1}, \ldots , X_{i_m} ),
\end{equation}
holding for every $n \in \bN$ and 
$i_1, \ldots , i_n \in \{ 1, \ldots , k \}$, and where 
$\beta_{n+1; \nu}$ denotes the $(n+1)$-th Boolean cumulant 
functional of $\nu$ (a multilinear functional on 
$\bC \langle X_1, \ldots , X_k \rangle^{n+1}$ -- the
precise definition of Boolean cumulants is reviewed in 
Section 3 below).
\end{definition}

It is immediate that in the case $k=1$, Equation (\ref{eqn:11a})
boils down to (\ref{eqn:1b}).

For general $k \in \bN$, the link between the $k$-variate 
versions of the operations $\boxplusc$ and $\boxplusB$ is
stated in the same way as in the univariate case, with the detail 
that the first functional ``$\mu$'' is assumed to be tracial 
(that is, it has the property that $\mu (PQ) = \mu (QP)$ for 
all $P,Q \in \bC \langle X_1, \ldots , X_k \rangle$).

\begin{theorem}   \label{thm:12}
Let $k$ be a positive integer and let 
$\mu_1, \nu_1, \mu_2, \nu_2 \in \Dalg (k)$, 
where $\mu_1$ and $\mu_2$ are tracial.  If we denote
\[ 
( \mu_1 , \nu_1 ) \boxplusc 
( \mu_2 , \nu_2 ) = ( \mu , \nu ) 
\in \Dalg (k) \times \Dalg (k),
\]
then it follows that
\[
( \mu_1 , \Psi_k ( \nu_1 ) ) \boxplusB 
( \mu_2 , \Psi_k ( \nu_2 ) ) 
= ( \mu , \Psi_k ( \nu ) ) 
\in \Dalg (k) \times \Dalg ' (k).
\]
\end{theorem}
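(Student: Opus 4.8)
The plan is to deduce the theorem from a single combinatorial identity relating the infinitesimal free cumulants produced by $\Psi_k$ to the c-free cumulants of the original pair; I will refer to it as the \emph{Key Lemma}. It should assert that for every \emph{tracial} $\mu \in \Dalg(k)$, every $\nu \in \Dalg(k)$, every $n \geq 1$ and all $i_1, \ldots, i_n \in \{1, \ldots, k\}$,
\[
\kappa_n'[\mu, \Psi_k(\nu)](X_{i_1}, \ldots, X_{i_n})
= \sum_{m=1}^n \kappa^{(c)}_{n+1}[\mu, \nu](X_{i_m}, \ldots, X_{i_n}, X_{i_1}, \ldots, X_{i_m}),
\]
where $\kappa_n'[\cdot,\cdot]$ and $\kappa^{(c)}_n[\cdot,\cdot]$ denote the infinitesimal free and the c-free cumulant functionals. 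In words: the infinitesimal free cumulants of $(\mu, \Psi_k(\nu))$ are obtained from the c-free cumulants of $(\mu, \nu)$ by the very same ``cut and re-attach'' prescription by which, in Definition~\ref{def:11}, $\Psi_k(\nu)$ is built out of the Boolean cumulants of $\nu$. Granting the Key Lemma, the theorem follows quickly: recall that $(\mu_1, \nu_1) \boxplusc (\mu_2, \nu_2) = (\mu, \nu)$ is equivalent to additivity of the ordinary free cumulants, $\kappa_n[\mu] = \kappa_n[\mu_1] + \kappa_n[\mu_2]$, together with additivity of the c-free cumulants, $\kappa^{(c)}_n[\mu, \nu] = \kappa^{(c)}_n[\mu_1, \nu_1] + \kappa^{(c)}_n[\mu_2, \nu_2]$; likewise $(\mu_1, \Psi_k(\nu_1)) \boxplusB (\mu_2, \Psi_k(\nu_2)) = (\mu, \Psi_k(\nu))$ is equivalent to the same additivity of the $\kappa_n$ together with additivity of the $\kappa_n'$. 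The free-cumulant condition is common to both and is part of the hypothesis, and $\mu = \mu_1 \boxplus \mu_2$ is automatically tracial since $\mu_1, \mu_2$ are; hence the Key Lemma applies to each of $(\mu_1, \nu_1)$, $(\mu_2, \nu_2)$, $(\mu, \nu)$, and since the operation $F \mapsto \sum_{m=1}^n F(X_{i_m}, \ldots, X_{i_m})$ on its right-hand side is linear, the additivity of $\kappa^{(c)}_{n+1}$ transfers to the additivity of $\kappa_n'$, which is exactly what is needed.

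To prove the Key Lemma I would proceed via moment--cumulant inversion. Write $\kappa_m := \kappa_m[\mu]$ for the ordinary free cumulant functionals of $\mu$ and recall the two moment--cumulant formulas involved. The c-free cumulants $\rho_m := \kappa^{(c)}_m[\mu, \nu]$ are determined by
\[
\nu(X_{i_1} \cdots X_{i_n}) = \sum_{\pi \in NC(n)} \
\prod_{\substack{V \in \pi \\ V \text{ outer}}} \rho_{|V|}(X_{i_V})
\prod_{\substack{V \in \pi \\ V \text{ inner}}} \kappa_{|V|}(X_{i_V})
\]
($X_{i_V}$ denoting the tuple of variables at the positions of $V$, listed increasingly, and ``outer'' meaning the block is enclosed by no other block of $\pi$), while the infinitesimal free cumulants $\kappa_m'[\mu, \mu']$ of a pair $(\mu, \mu')$ are determined by
\[
\mu'(X_{i_1} \cdots X_{i_n}) = \sum_{\pi \in NC(n)} \sum_{W \in \pi}
\kappa_{|W|}'(X_{i_W}) \prod_{\substack{V \in \pi \\ V \neq W}} \kappa_{|V|}(X_{i_V}).
\]
Both formulas recover the cumulants from the moments recursively in $n$; hence, by uniqueness, the Key Lemma is equivalent to the moment identity obtained by feeding the candidate family $\widetilde{\kappa}_n(X_{i_1}, \ldots, X_{i_n}) := \sum_{m=1}^n \rho_{n+1}(X_{i_m}, \ldots, X_{i_n}, X_{i_1}, \ldots, X_{i_m})$ into the second formula and demanding that the output be $\Psi_k(\nu)$, i.e. to the identity
\[
\sum_{m=1}^n \beta_{n+1; \nu}(X_{i_m}, \ldots, X_{i_n}, X_{i_1}, \ldots, X_{i_m})
= \sum_{\pi \in NC(n)} \sum_{W \in \pi}
\widetilde{\kappa}_{|W|}(X_{i_W}) \prod_{\substack{V \in \pi \\ V \neq W}} \kappa_{|V|}(X_{i_V}).
\]

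I would establish this by expanding both sides as sums over decorated non-crossing partitions and constructing an explicit bijection. On the left, first expand $\beta_{n+1; \nu}$ over interval partitions of $\{1, \ldots, n+1\}$ (the Boolean moment--cumulant formula) and then expand each resulting $\nu$-moment by the c-free formula; on the right, expand $\widetilde{\kappa}_{|W|}$ and re-expand the $\rho$'s that appear. A c-free non-crossing partition of the cyclically shifted $(n+1)$-letter word carrying a distinguished outer block through the repeated letter $X_{i_m}$ should then correspond to a non-crossing partition of $\{1, \ldots, n\}$ with a marked block $W$ together with the cyclic-shift index $m$; ``cutting'' the word at the repeated letter is precisely the operation that turns the marked block into an outer block. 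Traciality of $\mu$ enters at exactly one point: under this bijection the free-cumulant factors $\kappa_{|V|}(X_{i_V})$ attached to the inner blocks have their arguments cyclically permuted, and their product is unchanged only because the free cumulants of a tracial functional are invariant under cyclic permutation of their arguments.

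The main obstacle is this bijection and its bookkeeping: one must track simultaneously the cut position $m$, the nesting of the outer blocks in the c-free expansion (which has to ``unfold'' into a single marked block under $\widetilde{\kappa}$), and the interval partition coming from the Boolean cumulant, and then check that all multiplicities match on the nose. A more conceptual way to organise the same computation --- and the one suggested by the abstract, which promises a transformation $\Deltastar$ acting on families of multilinear functionals --- is to work over a vector space $\cV$ equipped with the ``diagonal'' map $\Delta(v) = v \otimes v$ (this is what accounts for the repeated letter $X_{i_m}$), to realise both $\Psi_k$ and the passage $\kappa^{(c)} \mapsto \kappa'$ as instances of one and the same transformation $\Deltastar$, and to prove once and for all that $\Deltastar$ intertwines the c-free and the infinitesimal moment--cumulant correspondences; Theorem~\ref{thm:12} then drops out by combining that intertwining with the additivity statements recalled in the first paragraph. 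In either presentation the combinatorial heart is the same cut-and-re-attach bijection, and verifying its compatibility with the cyclic symmetry provided by traciality is where the real work lies.
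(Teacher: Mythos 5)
Your proposal is correct and follows essentially the same route as the paper: your ``Key Lemma'' is precisely the paper's Theorem \ref{thm:14}, and your deduction of Theorem \ref{thm:12} from it --- additivity of the c-free cumulants under $\boxplusc$ transferred to additivity of the infinitesimal free cumulants via the linear cyclic-sum operation, plus the observation that $\mu_1\boxplus\mu_2$ inherits traciality --- is exactly the argument in Section 8.2. Your sketch of the Key Lemma's proof (moment--cumulant inversion organised through the diagonal map $\Delta(v)=v\otimes v$ and a transformation $\Deltastar$ intertwining the two cumulant correspondences) also matches the strategy the paper carries out in Sections 4--7.
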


Closely related to Theorem \ref{thm:12}, one has a result about 
the actual concepts of independence that are under discussion.  
At the level of multi-variate distributions, 
the statement of this result concerns the suitable versions of 
{\em free products} of algebraic distributions -- more precisely, 
one calls on the free product operations ``$\freeprodc$'' and 
``$\freeprodB$'' which correspond to the operations of 
concatenating c-free tuples (and respectively infinitesimally 
free tuples) in a general C-ncps (respectively I-ncps).  
The result about how the maps $\Psi_k$ link these free product 
operations is stated as follows.

\begin{theorem}   \label{thm:13}
Let $k, \ell$ be positive integers and let 
$\mu_1, \nu_1  \in \Dalg (k)$,
$\mu_2, \nu_2 \in \Dalg ( \ell )$, where 
$\mu_1, \mu_2$ are tracial.  Consider the free product
\[
( \mu_1 , \nu_1 ) \freeprodc ( \mu_2 , \nu_2 ) 
= ( \widetilde{\mu} , \widetilde{\nu} )
\in \Dalg ( k + \ell ) \times \Dalg ( k + \ell ).
\]
 Then one has 
\[
( \mu_1 , \Psi_k ( \nu_1 ) ) \freeprodB
( \mu_2 , \Psi_{\ell} ( \nu_2 ) ) = 
( \widetilde{\mu} , \Psi_{k + \ell} ( \widetilde{\nu} ) ) 
\in \Dalg ( k + \ell ) \times \Dalg ' (k + \ell ).
\]
\end{theorem}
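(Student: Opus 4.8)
The plan is to reduce Theorem \ref{thm:13} to Theorem \ref{thm:12} by a \emph{padding-by-null-variables} device, which converts the free-product operations $\freeprodc,\freeprodB$ into the additive-convolution operations $\boxplusc,\boxplusB$ in $k+\ell$ variables. For $\rho\in\Dalg(k)$ let $\iota_1(\rho)\in\Dalg(k+\ell)$ be the functional agreeing with $\rho$ on words in $X_1,\dots,X_k$ and vanishing on every word that contains one of $X_{k+1},\dots,X_{k+\ell}$; symmetrically, for $\rho\in\Dalg(\ell)$ let $\iota_2(\rho)\in\Dalg(k+\ell)$ be supported, in the same sense, on the last $\ell$ letters. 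The identical formulas define $\iota_1,\iota_2$ on the spaces $\Dalg '$. Conceptually, $\iota_1(\rho)$ is the joint distribution of $(a_1,\dots,a_k,0,\dots,0)$ when $(a_1,\dots,a_k)$ has distribution $\rho$, which accounts for facts (1) and (2) below.

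I would first record three facts. \textbf{(1)} \emph{Padding intertwines free product with additive convolution:} since c-freeness and infinitesimal freeness pass to subalgebras generated by sub-tuples, and a $(k+\ell)$-tuple is the coordinatewise sum of its two null-padded blocks, one has $(\mu_1,\nu_1)\freeprodc(\mu_2,\nu_2)=\iota_1(\mu_1,\nu_1)\boxplusc\iota_2(\mu_2,\nu_2)$, and likewise $(\mu_1,\mu_1')\freeprodB(\mu_2,\mu_2')=\iota_1(\mu_1,\mu_1')\boxplusB\iota_2(\mu_2,\mu_2')$ for $\mu_i'\in\Dalg '$ (here $\iota_j$ is applied to a pair componentwise). \textbf{(2)} \emph{Padding commutes with $\Psi$:} $\Psi_{k+\ell}(\iota_1(\rho))=\iota_1(\Psi_k(\rho))$ and $\Psi_{k+\ell}(\iota_2(\rho))=\iota_2(\Psi_\ell(\rho))$. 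Via Definition \ref{def:11} this reduces to two sub-claims about $\beta_{n+1;\iota_1(\rho)}$: it vanishes as soon as one of its $n+1$ entries lies among $X_{k+1},\dots,X_{k+\ell}$ -- an immediate induction on $n$ from the Boolean moment-cumulant formula, all relevant moments being zero -- and on entries drawn from $\bC\langle X_1,\dots,X_k\rangle$ it agrees with $\beta_{n+1;\rho}$, because Boolean cumulants depend only on the restriction of the functional to the subalgebra generated by their entries. \textbf{(3)} \emph{Padding preserves traciality:} cyclic rotation of a word preserves whether or not it lies in $\bC\langle X_1,\dots,X_k\rangle$, on which $\iota_1(\mu)$ is just $\mu$; similarly for $\iota_2$.

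Granting (1)--(3), the theorem is a chain of identities. Set $(\widetilde\mu,\widetilde\nu)=(\mu_1,\nu_1)\freeprodc(\mu_2,\nu_2)=\iota_1(\mu_1,\nu_1)\boxplusc\iota_2(\mu_2,\nu_2)$ by (1). By (3), $\iota_1(\mu_1)$ and $\iota_2(\mu_2)$ are tracial, so Theorem \ref{thm:12} with $k$ replaced by $k+\ell$ gives
\[
\bigl(\iota_1(\mu_1),\Psi_{k+\ell}(\iota_1(\nu_1))\bigr)\boxplusB\bigl(\iota_2(\mu_2),\Psi_{k+\ell}(\iota_2(\nu_2))\bigr)=\bigl(\widetilde\mu,\Psi_{k+\ell}(\widetilde\nu)\bigr).
\]
Rewriting the left side by (2) turns it into $\iota_1(\mu_1,\Psi_k(\nu_1))\boxplusB\iota_2(\mu_2,\Psi_\ell(\nu_2))$, which by the $\boxplusB$-half of (1) equals $(\mu_1,\Psi_k(\nu_1))\freeprodB(\mu_2,\Psi_\ell(\nu_2))$. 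Comparison of the two sides is the assertion.

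The step I expect to be the main obstacle is fact (1): one must check, against the precise definitions of $\freeprodc/\boxplusc$ and $\freeprodB/\boxplusB$ fixed in the earlier sections, that concatenating two c-free (resp.\ infinitesimally free) tuples yields the same distribution as adding the two tuples after padding each by null variables -- which rests on stability of c-freeness and infinitesimal freeness under passing to subalgebras generated by sub-tuples, and on the trivial remark that a tuple is the coordinatewise sum of its null-padded blocks. These points are routine but must be stated with care, particularly on the infinitesimal side where first-order parts are also being tracked. As a self-contained alternative, more in the spirit of the rest of the paper, one could argue directly: encode $\freeprodc$ and $\freeprodB$ as the vanishing of the mixed c-free, resp.\ infinitesimal free, cumulant functionals whose entries are split between the two blocks of variables, invoke the $\Deltastar$-description of the effect of $\Psi_k$ on the relevant families of cumulant functionals, and observe that $\Deltastar$ operates entry-by-entry, hence carries a family of blockwise-vanishing cumulants to a family of blockwise-vanishing cumulants.
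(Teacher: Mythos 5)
Your proof is correct, but it follows a genuinely different route from the paper's. The paper deduces Theorem \ref{thm:13} directly from Theorem \ref{thm:14}: it verifies the defining cumulant identity of $\freeprodB$ for the pair $( \widetilde{\mu} , \Psi_{k+\ell} ( \widetilde{\nu} ) )$ on generator tuples $(X_{i_1},\ldots,X_{i_n})$ by a three-case analysis (all indices in the first block, all in the second, mixed), applying Theorem \ref{thm:14} twice in each of the first two cases and noting that all cyclic rotations of a mixed tuple stay mixed, so everything vanishes in the third. Your argument instead reduces Theorem \ref{thm:13} to Theorem \ref{thm:12} by padding with null variables; this is legitimate and non-circular, since the paper's proof of Theorem \ref{thm:12} does not invoke Theorem \ref{thm:13} (be aware, though, that the existence half of Proposition \ref{prop:86}, which defines $\boxplusc$ and $\boxplusB$, is itself obtained from the free products of Proposition \ref{prop:82}, so your logical order inverts the paper's). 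Your facts (2) and (3) are immediate, as you say. Fact (1) is where the content sits: in the paper's purely cumulant-theoretic formulation it amounts to showing, for each of the relevant cumulant brands, that the cumulants of a padded pair restrict to those of the original pair on tuples from its own block and vanish on any tuple containing a foreign letter -- the vanishing following by induction from the moment--cumulant recursions, since every moment of a padded functional involving a foreign letter is zero, and then invoking the uniqueness statements of Lemma \ref{lem:81}. Unwound this way, fact (1) contains essentially the same blockwise bookkeeping that the paper's direct proof performs, so the reduction buys a clean conceptual identity (free product equals additive convolution of null-padded blocks) and reuse of Theorem \ref{thm:12}, rather than a net saving of work. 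Your closing alternative -- mixed-cumulant vanishing combined with the entry-by-entry, cyclic-sum action of $\Psi$ on cumulants -- is in substance the paper's actual argument.
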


Theorems \ref{thm:12} and \ref{thm:13} are, in turn,
consequences of the next result about how $c$-free cumulants and 
infinitesimal free cumulants get to be related in connection 
to $\Psi_k$.

\begin{theorem}  \label{thm:14}
Let $k$ be a positive integer, let $\mu, \nu$ be in $\Dalg (k)$ 
such that $\mu$ is tracial, and let 
$\mu' := \Psi_k ( \nu ) \in \Dalg ' (k)$.  Then the 
infinitesimal free cumulants $( \kappa_n ' )_{n=1}^{\infty}$
of $( \mu , \mu' )$ are related to the $c$-free cumulants 
$( \kappa_n^{(c)} )_{n=1}^{\infty}$ of $( \mu , \nu )$
by the formula 
\[
\kappa_n ' ( X_{i_1}, \ldots , X_{i_n} ) 
= \sum_{m=1}^n
\kappa_{n+1}^{(c)} 
( X_{i_m}, \ldots , X_{i_n}, X_{i_1}, \ldots , X_{i_m} ),
\] 
holding for every $n \in \bN$ and 
$i_1, \ldots , i_n \in \{ 1, \ldots , k \}$.
\end{theorem}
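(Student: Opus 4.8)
The plan is to prove the identity by showing that its right-hand side \emph{is} the sequence of infinitesimal free cumulants of $(\mu,\mu')$, using the standard description of the latter through a ``differentiated'' moment-cumulant relation. Write $(\kappa_n)_{n=1}^{\infty}$ for the ordinary free cumulant functionals of $\mu$; since $\mu$ is tracial they are invariant under cyclic permutation of their arguments. Recall (from \cite{BoLeSp1996}, reviewed in Section~3) that the $c$-free cumulants of $(\mu,\nu)$ satisfy
\[
\nu(X_{i_1}\cdots X_{i_n})
= \sum_{\pi\in NC(n)}\ \prod_{V\text{ outer}}\kappa^{(c)}_{|V|}(X|_V)\ \prod_{V\text{ inner}}\kappa_{|V|}(X|_V),
\]
where $X|_V$ is the subword of $X_{i_1}\cdots X_{i_n}$ indexed by $V$ and ``outer''/``inner'' refers to nesting of blocks inside $\pi$; and recall (from \cite{FeNi2010}) that the infinitesimal free cumulants $(\kappa_n')_{n=1}^{\infty}$ of $(\mu,\mu')$ are the unique multilinear functionals with
\[
\mu'(X_{i_1}\cdots X_{i_n})
= \sum_{\sigma\in NC(n)}\ \sum_{W\in\sigma}\kappa_{|W|}'(X|_W)\prod_{V\in\sigma,\,V\neq W}\kappa_{|V|}(X|_V)
\]
for every word (this is obtained by formally differentiating the free moment-cumulant relations; uniqueness holds by induction on $n$, since the one-block partition contributes the term $\kappa_n'$ and every other term involves only lower-order primed cumulants). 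Hence it suffices to prove that the multilinear functionals $\widetilde{\kappa}_n$ determined by $\widetilde{\kappa}_n(X_{i_1},\dots,X_{i_n}):=\sum_{m=1}^{n}\kappa^{(c)}_{n+1}(X_{i_m},\dots,X_{i_n},X_{i_1},\dots,X_{i_m})$ satisfy this last relation when $\mu'=\Psi_k(\nu)$.

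As an intermediate step I would establish the expansion of the Boolean cumulants of $\nu$ in terms of the $c$-free cumulants of $(\mu,\nu)$: for all $N$ and all words of length $N$,
\[
\beta_{N;\nu}(X_{j_1},\dots,X_{j_N})
= \sum_{\substack{\pi\in NC(N)\\ 1\sim_\pi N}}\kappa^{(c)}_{|V_\pi|}(X|_{V_\pi})\prod_{W\in\pi,\,W\neq V_\pi}\kappa_{|W|}(X|_W),
\]
where $V_\pi$ is the block of $\pi$ containing $1$; note that $1\sim_\pi N$ puts $N$ into $V_\pi$ as well, so $V_\pi$, reaching from $1$ to $N$, is the \emph{unique} outer block of $\pi$. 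This is proved by inserting the $c$-free moment-cumulant formula above into the Boolean moment-cumulant relation $\nu(X_{j_1}\cdots X_{j_N})=\sum_{\sigma\in\Int(N)}\prod_{W\in\sigma}\beta_{|W|;\nu}(X|_W)$, and using the bijection that cuts a partition $\pi\in NC(N)$ at its return points $\{i:\{1,\dots,i\}\text{ is a union of blocks of }\pi\}$ into an interval partition together with irreducible non-crossing partitions of the intervals; under this bijection the outer/inner labels decompose block by block, so the right-hand side above obeys the recursion defining Boolean cumulants, hence equals $\beta_{N;\nu}$. (This identity is most likely part of the Boolean-cumulant material of Section~3.)

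The core of the proof is then a bijective reorganization. Substituting the previous display into the definition (\ref{eqn:11a}) of $\mu'=\Psi_k(\nu)$ produces a double sum over $m\in\{1,\dots,n\}$ and $\pi\in NC(n+1)$ with $1\sim_\pi(n+1)$, in which the $n+1$ entries fed to $\beta_{n+1;\nu}$ form the cyclically rotated word $X_{i_m},X_{i_{m+1}},\dots,X_{i_n},X_{i_1},\dots,X_{i_{m-1}},X_{i_m}$, positions $1$ and $n+1$ both carrying $X_{i_m}$. Given such a pair $(m,\pi)$, I would transport the restriction of $\pi$ to $\{1,\dots,n\}$ onto $\{1,\dots,n\}$ along the cyclic shift $p\mapsto((m-1+p-1)\bmod n)+1$ and then glue the two $\pi$-equivalent endpoints $1$ and $n+1$; since $NC(n)$ is stable under cyclic rotation this yields a genuine $\sigma\in NC(n)$, in which the glued image $W$ of $V_\pi$ is a distinguished block with $|W|=|V_\pi|-1$ and $m\in W$. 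Conversely, a triple $(\sigma,W,m)$ with $W\in\sigma$ and $m\in W$ is recovered by cutting the cycle at $m$ and un-gluing $W$ there, and the two maps are mutually inverse. Under this correspondence every block $V\neq W$ of $\sigma$ comes from an inner block of $\pi$ whose string of arguments is a cyclic rotation of $X|_V$, so it contributes the \emph{same} value $\kappa_{|V|}(X|_V)$ --- here, and only here, traciality of $\mu$ is used --- while the outer block $V_\pi$ contributes $\kappa^{(c)}_{|W|+1}$ evaluated on the rotation of $X|_W$ that begins at the slot occupied by $m$. Summing over the $|W|$ choices of $m\in W$ therefore produces exactly $\widetilde{\kappa}_{|W|}(X|_W)$, and summing over all $(\sigma,W)$ reproduces the right-hand side of the differentiated moment-cumulant formula; by its uniqueness $\widetilde{\kappa}_n=\kappa_n'$, which is the assertion.

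The main obstacle, I expect, is precisely this ``cut-and-glue'' bijection: it must be set up carefully, keeping simultaneous track of the distinguished block and of its cyclic base point, and calling at the right moments on the cyclic invariance of $NC(n)$ and --- for the undistinguished blocks only --- on the cyclic invariance of the free cumulants of the tracial $\mu$. The $c$-free cumulants sitting on the distinguished block are \emph{not} cyclically invariant, but the summation over base points absorbs this, which is exactly why the statement features a cyclic sum rather than a single term. The Boolean-to-$c$-free identity of the second paragraph is comparatively routine and may simply be quoted from Section~3. (Alternatively, one could run the whole argument in the abstract framework of a pair $(\cV,\Delta)$ as in the abstract, with $\Psi_k$ encoded via $\beta_\nu$ and a fixed linear map $\Delta:\cV\to\cV\otimes\cV$, and the theorem recast as a property of the induced transformation $\Deltastar$ of cumulant families; the combinatorial skeleton is still the one above.)
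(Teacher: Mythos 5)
Your argument is correct, and it reaches the paper's central combinatorial step by a genuinely different route. The paper derives Theorem \ref{thm:14} from Theorem \ref{thm:17}, whose proof matches two \emph{M\"obius-inverted} expansions term by term: the formula (\ref{eqn:34b}) for $\kappa_n'$ in terms of $\mu'$ and moments of $\mu$, against the explicit formula of Proposition \ref{prop:41} for $\kappa^{(c)}_{n+1}$ in terms of $\beta_{\uChi}$ and moments of $\mu$ (whose derivation occupies Section 4 and calls on the duality between $\sqsubseteq$ and $\ll$ under Kreweras complementation, Lemma \ref{lem:210}), and then checks that the M\"obius coefficients agree under the cut-and-glue bijection (Equation (\ref{eqn:66b})). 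You instead stay entirely at the level of the \emph{defining} moment--cumulant relations: your intermediate identity expresses $\beta_{N;\nu}$ as a sign-free sum over $\{\pi \ll 1_N\}$ of one $\kappa^{(c)}$ on the outer block times free cumulants of $\mu$ on the inner blocks (the irreducible/interval decomposition of $NC(N)$, which is correct and is essentially the M\"obius inverse of Proposition \ref{prop:41}), you substitute it into Definition \ref{def:11}, and you verify that the cyclic sums $\widetilde{\kappa}_n$ satisfy the relation (\ref{eqn:34a}) characterizing the infinitesimal free cumulants, concluding by the (easy, inductive) uniqueness of that characterization. The pivot of both proofs is identical --- the bijection between $\{\pi\in NC(n+1):\pi\ll 1_{n+1}\}$ and $NC(n)$ given by cyclic rotation by $m$ followed by gluing/merging the two endpoints (the paper's $F_n^{(m)}$ of Notation \ref{rem:66}), with traciality of $\mu$ invoked exactly and only for the undistinguished blocks, whose argument strings get cyclically permuted. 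What your route buys is economy: no M\"obius functions, no Kreweras-complement duality, and no need for the explicit formula of Proposition \ref{prop:41}. What the paper's route buys is that explicit formula itself (of independent interest, and the engine of its Section 5 discussion) and the cleaner abstract statement of Theorem \ref{thm:17} for a general pair $(\cV,\Delta)$, of which Theorem \ref{thm:14} is the specialization $\Delta e_i = e_i\otimes e_i$ --- a generalization your argument would also support, as you note, with the same combinatorial skeleton.
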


$\ $

\subsection{A further generalization of the map $\Psi_k$ and 
of Theorem \ref{thm:14}.}

$\ $

\noindent
A careful look at the proof of Theorem \ref{thm:14} 
reveals that the arguments needed there can be presented 
(and become in fact more transparent) in a framework 
where one simply focuses on relations between 
families of multilinear functionals on a vector space 
$\cV$, without assuming that $\cV$ has a multiplicative 
structure.  In the present subsection we explain how this 
goes.  It will come in handy to use the following notation.

\begin{notation}   \label{def:15}
Let $\cV$ be a vector space over $\bC$.  We denote
\begin{equation}   \label{eqn:15a}
\fMcV := \{ \uPhi : \uPhi = ( \varphi_n )_{n=1}^{\infty}, 
\mbox{ with $\varphi_n : \cV^n \to \bC$ multilinear for 
       every }   n \in \bN \} .
\end{equation}
A family $\uPhi = ( \varphi_n )_{n=1}^{\infty} \in \fMcV$
is said to be {\em tracial} when it has the property that 
\begin{equation}   \label{eqn:15b}
\varphi_n ( x_2, \ldots , x_n, x_1)
= \varphi_n (x_1, \ldots , x_n),
\ \ \forall \, n \geq 2 \mbox{ and } 
x_1, \ldots , x_n \in \cV.
\end{equation}
\end{notation}

Throughout a substantial part of this paper we will work with
\begin{equation}   \label{eqn:1f}
\left\{  \begin{array}{l}
\mbox{ couples $( \cV , \uPhi )$ where 
                 $\uPhi \in \fMcV$, and }             \\
\mbox{ triples $( \cV , \uPhi, \uPsi )$ where 
                 $\uPhi, \uPsi \in \fMcV$. }
\end{array}  \right.
\end{equation}
The motivating example for these structures is the one 
where $\cV$ is a {\em unital algebra} and where 
$\uPhi = ( \varphi_n )_{n=1}^{\infty}$ is completely 
determined by the linear functional $\varphi_1 : \cV \to \bC$
via the formula
\[
\varphi_n (x_1, \ldots , x_n) = \varphi_1 (x_1 \cdots x_n), 
\ \ \forall \, n \geq 1 \mbox{ and } 
x_1, \ldots , x_n \in \cV
\]
(same for $\uPsi = ( \psi_n )_{n=1}^{\infty}$ being completely 
determined by $\psi_1 : \cV \to \bC$ in the case of a triple
$( \cV , \uPhi , \uPsi )$).  If in the motivating example 
we require that $\varphi_1 ( 1_{{ }_{\cV}} ) = 1$, then 
looking at the couple $( \cV , \uPhi )$ is pretty much the 
same as looking at the ncps $( \cV , \varphi_1 )$.  Likewise, 
considering a triple $( \cV , \uPhi , \uPsi )$ in the 
motivating example boils down to looking at
$( \cV , \varphi_1, \psi_1 )$ -- the latter can be either
a C-ncps or an I-ncps, upon making the requirements that 
$\varphi_1 ( 1_{{ }_{\cV}} ) 
= \psi_1 ( 1_{{ }_{\cV}} ) = 1$, and respectively that 
$\varphi_1 ( 1_{{ }_{\cV}} ) = 1$,  
$\psi_1 ( 1_{{ }_{\cV}} ) = 0$.

There were four brands of cumulants appearing in the 
discussion of Sections 1.1 and 1.2:

\vspace{10pt}

\begin{tabular}{ll}
$\bullet$ &
free cumulants $( \kappa_n )_{n=1}^{\infty}$ 
associated to a ncps $( \cA , \varphi )$;       \\

$\bullet$ &
Boolean cumulants $( \beta_n )_{n=1}^{\infty}$ 
associated to a ncps $( \cA , \varphi )$;       \\

$\bullet$ &
c-free cumulants 
$( \kappa_n^{(c)} )_{n=1}^{\infty}$ associated 
to a C-ncps $( \cA , \varphi, \chi )$;           \\

$\bullet$ &
infinitesimal free cumulants 
$( \kappa_n ' )_{n=1}^{\infty}$ associated to
an I-ncps $( \cA , \varphi, \varphi ' )$.
\end{tabular}

\vspace{10pt}

\noindent
The definitions of all these four brands of cumulants go
through, without any change, to the situation where instead 
of an ncps $( \cA , \varphi )$ we consider a couple 
$( \cV , \uPhi )$ as on the first line of (\ref{eqn:1f}), 
and where instead of a C-ncps $( \cA , \varphi, \chi )$ or 
I-ncps $( \cA , \varphi , \varphi ' )$ we consider a triple
$( \cV , \uPhi, \uPsi )$ as on the second line of (\ref{eqn:1f}). 
The precise formulas for all these cumulants are reviewed in 
Section 3 below.  A pleasing feature arising in this more 
general framework is that the resulting families of cumulants 
belong to the same space $\fMcV$ that $\uPhi$ and $\uPsi$ 
were picked from.

We now proceed to explain how the construction of the map 
$\Psi$ of Belinschi-Shlyakhtenko extends to the framework of
(\ref{eqn:1f}), and to present the generalization of 
Theorem \ref{thm:14} to this situation.  We will take as 
input (besides the vector space $\cV$) a linear map 
$\Delta : \cV \to \cV \otimes \cV$, and we will use a 
transformation $\Deltastar$ of the space $\fMcV$ which is 
constructed from $\Delta$ in the way described as follows.

\begin{notation}   \label{def:16}
{\em (The transformation $\Deltastar$.)}

Let $\cV$ be a vector space over $\bC$ and let 
$\Delta : \cV \to \cV \otimes \cV$ be a linear map.

\vspace{6pt}

(1) For every $n \in \bN$ and $m \in \{1,\ldots,n\}$ we let
$\Delta_n^{(m)}: \cV^{\otimes n} \to \cV^{\otimes(n+1)}$ be
the linear map determined by the requirement that 
\[
\Delta_n^{(m)}(x_1\otimes\cdots\otimes x_n)
=x_1\otimes\cdots\otimes x_{m-1} \otimes 
\bigl( \Delta x_m \bigr)
\otimes x_{m+1} \otimes \cdots \otimes x_n,
\]
for all $x_1, \ldots , x_n \in \cV$. 
Note that in particular one has $\Delta = \Delta_1^{(1)}$.

\vspace{6pt}

(2) For every $n\in\bN$ we let
$\Gamma_n : \cV^{\otimes n} \to \cV^{\otimes n}$ 
be the linear map determined by the requirement that 
\[
\Gamma_n (x_1\otimes\cdots\otimes x_n)
= x_2\otimes\cdots\otimes x_n \otimes x_1, 
\mbox{ for all } x_1, \ldots , x_n \in \cV. 
\]
In particular $\Gamma_1$ is the identity map on $\cV$ and 
$\Gamma_2$ is the so-called flip map, 
$\Gamma_2 (x_1 \otimes x_2) = x_2 \otimes x_1$.

\vspace{6pt}

(3) For every $n\in\bN$ we let
$\widetilde{\Delta}_n : \cV^{\otimes n} \to \cV^{\otimes (n+1)}$ 
be the linear map defined by 
\[
\widetilde{\Delta}_n 
= \sum_{m=1}^n \Gamma_{n+1}^m \circ \Delta_n^{(m)}.
\]
[For a concrete illustration, suppose that 
$x_1, \ldots , x_n \in \cV$ are such that every $\Delta x_i$
is a simple tensor $x_i ' \otimes x_i ''$, 
$1 \leq i \leq n$.  Then 
$\widetilde{\Delta}_n (x_1 \otimes \cdots \otimes x_n)$
comes out as 
\[
\sum_{m=1}^n x_m '' \otimes x_{m+1} \otimes \cdots \otimes x_n 
\otimes x_1 \otimes \cdots \otimes x_{m-1} \otimes x_m ' .]
\]

\vspace{6pt}

(4) Let $\fMcV$ be the space of families of multilinear functionals
from Notation \ref{def:15}.  We define a transformation 
$\Deltastar : \fMcV \to \fMcV$ as follows: given 
$\uPhi = ( \varphi_n )_{n=1}^{\infty} \in \fMcV$, we put
\begin{equation}   \label{eqn:16a}
\Deltastar ( \uPhi ) := \uPsi = ( \psi_n )_{n=1}^{\infty}
\end{equation}
where 
\begin{equation}   \label{eqn:16b}
\psi_n := \varphi_{n+1} \circ \widetilde{\Delta}_n, 
\ \ \forall \, n \in \bN .
\end{equation}
Equation (\ref{eqn:16b}) tacitly uses some natural 
identifications: first, $\varphi_{n+1}$ is viewed 
as a linear map $\cV^{\otimes (n+1)} \to \bC$ (rather 
than a multilinear map $\cV^{n+1} \to \bC$); this makes 
$\varphi_{n+1} \circ \widetilde{\Delta}_n$ be defined as
a linear map $\cV^{\otimes n} \to \bC$, which is then
identified with a multilinear map $\cV^n \to \bC$, as
$\psi_n$ is required to be.
\end{notation}

\begin{theorem}   \label{thm:17}
Let $\cV$ be a vector space over $\bC$ and let 
$\Delta : \cV \to \cV \otimes \cV$ be a linear map.
Let $\uChi \in \fMcV$, and let
\begin{equation}   \label{eqn:17a} 
\uPhi ' := \Deltastar ( \uBeta_{\uChi} ), 
\end{equation}
with $\Deltastar$ as above and where $\uBeta_{\uChi} \in \fMcV$
is the family of Boolean cumulants associated to 
$( \cV , \uChi )$.  Then for any tracial 
$\uPhi \in \fMcV$, the following happens: denoting the 
c-free cumulants of $( \cV, \uPhi, \uChi )$ by 
$\uKappa^{(c)}$ and denoting the infinitesimal free cumulants 
of $( \cV, \uPhi, \uPhi ')$  by $\uKappa '$, one has the 
relation
\begin{equation}   \label{eqn:17b} 
\uKappa ' = \Deltastar ( \uKappa^{(c)} \, ).
\end{equation}
\end{theorem}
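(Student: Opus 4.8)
The plan is to reduce everything to a single combinatorial identity relating the various cumulants via the moment–cumulant formulas indexed by non-crossing partitions, and then to recognize the transformation $\Deltastar$ inside that identity. I will work throughout with the abstract framework of a couple $(\cV,\uPhi)$ (and a triple $(\cV,\uPhi,\uChi)$), exploiting the fact — emphasized in the excerpt — that all four brands of cumulants are defined by the same non-crossing-partition sums as in the ncps case, but now living in $\fMcV$. The key structural observation to set up first is the following: for a family $\uChi=(\chi_n)_{n=1}^\infty\in\fMcV$ and a linear map $\Delta:\cV\to\cV\otimes\cV$, applying $\Deltastar$ and then reading off the $n$-th component amounts to inserting $\Delta$ into one chosen argument $x_m$ of $\chi_{n+1}$ and cyclically rotating so that the two halves of $\Delta x_m$ end up at the two ends, summed over $m=1,\dots,n$. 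This is exactly the structure visible in Definition~\ref{def:11} (take $\cV=\bC\langle X_1,\dots,X_k\rangle$ and $\Delta$ the coproduct $X_i\mapsto X_i\otimes 1 + 1\otimes X_i$, after which $\widetilde\Delta_n$ reproduces the cyclic sum in~(\ref{eqn:11a})), so Theorem~\ref{thm:17} genuinely contains Theorem~\ref{thm:14} as the special case $\uPhi'=\Psi_k(\nu)$, $\uChi=\uBeta_\nu$. I would state this reduction explicitly as a lemma before touching the main argument.

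Next I would recall the three relevant moment–cumulant relations in $\fMcV$: (i) $\uChi\leftrightarrow\uBeta_\uChi$ via \emph{interval} partitions; (ii) $\uPhi\leftrightarrow\uKappa$ and the pair $(\uPhi,\uChi)\leftrightarrow\uKappa^{(c)}$ via the c-free recursion over $NC$, where $\uChi$-cumulants are read on the "outer" blocks and $\uPhi$-cumulants on the "inner" blocks; (iii) the pair $(\uPhi,\uPhi')\leftrightarrow(\uKappa,\uKappa')$ where $\uKappa'$ is obtained by the Leibniz/derivation rule applied to the $NC$ moment–cumulant formula for $\uPhi$ — concretely, $\varphi'_n = \sum_{\pi\in NC(n)}\sum_{V\in\pi}\kappa'_{|V|}[\,\text{entries of }V\,]\prod_{W\neq V}\kappa_{|W|}[\,\cdots\,]$, i.e. differentiating one block at a time. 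The heart of the proof is then to show: if $\uPhi'$ is defined by $(\ref{eqn:17a})$, then the unique $\uKappa'$ solving the derivation-type relation (iii) with the given $\uPhi,\uKappa$ equals $\Deltastar(\uKappa^{(c)})$. I would do this by substituting the candidate $\uKappa':=\Deltastar(\uKappa^{(c)})$ into the right-hand side of (iii), and showing the result equals $\varphi'_n = (\,\Deltastar\uBeta_\uChi\,)_n$.

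The computation splits as follows. On one side, $\varphi'_n = \sum_{m=1}^n (\varphi_{n+1}\circ\Gamma_{n+1}^m\circ\Delta_n^{(m)})$; expanding $\varphi_{n+1}$ over $NC(n+1)$ by the c-free formula, inserting $\Delta x_m$ into slot $m$, and using traciality of $\uPhi$ to cyclically rotate each $\varphi$-factor, I expect the terms to reorganize according to whether the block of $NC(n+1)$ containing the two "new" arguments $x_m',x_m''$ is the outer block or not. On the other side, $\sum_{V\in\pi}(\Deltastar\uKappa^{(c)})_{|V|}[V]\prod_{W\neq V}\kappa_{|W|}[W]$, again summed over $\pi\in NC(n)$, unfolds $\Deltastar$ on the distinguished block $V$, inserting $\Delta$ into one entry of $\kappa^{(c)}_{|V|+1}$. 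The main obstacle — and the step I would budget the most care for — is the bijective matching between these two families of indexed terms: one must check that splitting a block of a non-crossing partition of $\{1,\dots,n+1\}$ at the position of the inserted $\Delta$ corresponds precisely to choosing a non-crossing partition of $\{1,\dots,n\}$ together with a distinguished block, and that under this correspondence the outer/inner bookkeeping (which decides whether a factor is a $\chi$-cumulant, a $\varphi$-cumulant, a c-free cumulant $\kappa^{(c)}$, or an ordinary $\kappa$) matches on both sides. Traciality of $\uPhi$ is what makes the cyclic rotations legitimate and is presumably where the hypothesis is genuinely used; I would flag that the analogous statement fails without it. Once this bijection is in hand, the identity is a term-by-term comparison, and uniqueness of the solution $\uKappa'$ to (iii) (which is immediate, since $\varphi'_n$ determines $\kappa'_n$ recursively) upgrades the computation to the asserted equality $\uKappa'=\Deltastar(\uKappa^{(c)})$.
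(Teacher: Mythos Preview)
Your outline has the right intuition --- the proof is indeed a term-by-term bijective matching, with traciality of $\uPhi$ absorbing cyclic rotations --- but there are concrete errors in the setup and a missing structural ingredient.

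First, two specific mistakes.  The formula you write, $\varphi_n' = \sum_{m=1}^n \varphi_{n+1}\circ\Gamma_{n+1}^m\circ\Delta_n^{(m)}$, is wrong: the hypothesis is $\uPhi' = \Deltastar(\uBeta_{\uChi})$, so $\varphi_n' = \sum_{m=1}^n \beta_{n+1;\uChi}\circ\Gamma_{n+1}^m\circ\Delta_n^{(m)}$, with a \emph{Boolean cumulant of $\uChi$} in place of $\varphi_{n+1}$.  There is accordingly no ``c-free formula expanding $\varphi_{n+1}$'' to invoke.  Second, the specialization recovering Theorem~\ref{thm:14} uses $\cV=\bC^k$ with $\Delta e_i = e_i\otimes e_i$ (group-like), not the primitive coproduct $X_i\mapsto X_i\otimes 1 + 1\otimes X_i$; the latter does not reproduce Equation~(\ref{eqn:11a}).

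More seriously, the paper's argument hinges on an ingredient your plan does not have: an explicit formula (Proposition~\ref{prop:41}) expressing $\kappa_n^{(c)}$ as a sum over $\{\pi\in NC(n):\pi\ll 1_n\}$, with a M\"obius weight, a factor $\beta_{|V_o(\pi)|;\uChi}$ on the unique outer block $V_o(\pi)$, and factors $\varphi_{|V|}$ on the inner blocks.  The point is that this has exactly the same architecture as the explicit (M\"obius-inverted) formula~(\ref{eqn:34b}) for $\kappa_n'$: a sum over $NC(n)$ with one distinguished block carrying the ``special'' functional and the rest carrying $\varphi$'s.  The paper therefore works with the \emph{explicit} formulas on both sides (not the implicit moment--cumulant relation~(\ref{eqn:34a}) you propose to substitute into), and the bijection it uses is $F_n^{(m)}:\{\rho\in NC(n+1):\rho\ll 1_{n+1}\}\to NC(n)$, cyclic shift by $m$ followed by merging positions $m$ and $m+1$; under it the outer block of $\rho$ becomes the block of $\pi$ containing $m$, the M\"obius weights agree, and the $\varphi$-factors match up to a cyclic permutation of arguments --- this last point is exactly where traciality of $\uPhi$ is spent.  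Without Proposition~\ref{prop:41} (or the equivalent identity $\beta_{n;\uChi}=\sum_{\rho\ll 1_n}\kappa^{(c)}_{|V_o(\rho)|}\prod_{V\ \mathrm{inner}}\kappa_{|V|;\uPhi}$), the ``outer/inner bookkeeping'' you describe will not line up: the c-free moment--cumulant formula~(\ref{eqn:33a}) places $\kappa^{(c)}$ on \emph{every} outer block of $\pi$, whereas the infinitesimal-derivation formula singles out exactly \emph{one} block, so the two index sets are structurally different until this lemma is in place.
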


\vspace{10pt}

\begin{remark}   \label{rem:18}
(1) The generalization ``$\Psi_{ { }_{\Delta} }$'' of the 
map $\Psi_k$ from Section 1.2 is implicitly captured by 
Equation (\ref{eqn:17a}) in Theorem \ref{thm:17}.  The 
explicit formula for $\Psi_{ { }_{\Delta} }$ would simply be
\[
\Psi_{ { }_{\Delta} } ( \uChi ) 
= \Deltastar ( \uBeta_{\uChi} ), 
\ \ \uChi \in \fMcV .
\]

(2) In order to derive Theorem \ref{thm:14} out of 
Theorem \ref{thm:17}, one makes $\cV = \bC^k$ and lets 
$\Delta$ be the linear map defined by the prescription that 
\[
\Delta e_i = e_i \otimes e_i, \ \ 1 \leq i \leq k,
\]
where $e_1, \ldots , e_k$ is a fixed basis in $\bC^k$.  
This prescription leads precisely to the statement of 
Theorem \ref{thm:14}, via the natural identification of 
$\bC \langle X_1 , \ldots , X_k \rangle$ as the tensor algebra 
of $\bC^k$.  The details of how this happens are given in 
Section 7 below.
\end{remark}

$\ $

\subsection{Organization of the paper and some related remarks.}

$\ $

\noindent
We conclude this introduction by explaining how the paper is 
organized, and by giving a few highlights on the content of 
the various sections.  

\vspace{6pt}

Section 2 describes the combinatorial background used 
throughout the paper, which revolves around the lattices 
$NC(n)$ of non-crossing partitions.  We make a brief review 
of some standard facts about the partial order given on 
$NC(n)$ by reverse refinement, and we also discuss two 
other partial order relations on $NC(n)$ (denoted as
``$\ll$'' and ``$\, \sqsubseteq$'') which were studied in 
the more recent research literature, and are relevant for 
the considerations of the present paper.  

\vspace{6pt}

The rest of the paper is essentially divided into two parts.  
The first part is set in the more general framework outlined 
in Section 1.3 above, and consists of Sections 3-6:

-- Section 3 reviews the types of cumulants we will 
work with. 

-- In Section 4 we derive an explicit formula for 
c-free cumulants, which is useful for the proof of 
Theorem \ref{thm:17}.

-- Section 5 is devoted to a parallel discussion of how 
certain lattices of non-crossing partitions with symmetries 
($\NCB (n)$ and $\NCBopp (n)$) can be used in order to approach 
the cumulant functionals relevant to infinitesimally free 
and respectively to c-free probability.  In this section 
we observe a simple formula, which seems to have been overlooked 
up to now, and helps clarifying the connection between c-free 
cumulants and the lattices $\NCBopp (n)$.

-- In Section 6 we obtain the proof of Theorem \ref{thm:17}.

\vspace{6pt}

The final part of the paper consists of Sections 7 and 8,
which go in the framework of multi-variable distributions
considered in Section 1.2 above.  In Section 7 we connect to 
the setting of Section 6 and we observe how Theorem \ref{thm:14} 
can be derived from Theorem \ref{thm:17}.  Then in Section 8 we 
show how, on the other hand, Theorem \ref{thm:14} implies 
Theorems \ref{thm:12} and \ref{thm:13}.

\vspace{1cm}

\section{Combinatorial background}

\subsection{Review of some basic \boldmath{$NC(n)$} combinatorics.}

$\ $

\noindent
The workhorse for the combinatorial study of free independence is 
the family of {\em lattices of non-crossing partitions} $NC(n)$.
We review here some basic terminology related to this, which will 
be used throughout the paper.  For a more detailed introduction to 
the $NC(n)$'s, one can for instance consult Lectures 9 and 10 
of \cite{NiSp2006}. 

$\ $

\begin{definition}   \label{def:21}
(1) Let $n$ be a positive integer and let 
$\pi = \{ V_1 , \ldots , V_k \}$ be a partition of 
$\{ 1, \ldots ,n \}$; that is, $V_1 , \ldots , V_k$ are pairwise 
disjoint non-void sets (called the {\em blocks} of $\pi$) with
$V_1 \cup \cdots \cup V_k$ = $\{ 1, \ldots , n \}$.  
The number $k$ of blocks of $\pi$ will be denoted as $| \pi |$,
and we will occasionally use the notation ``$V \in \pi$'' to mean
that $V$ is one of $V_1, \ldots , V_k$.

We say that $\pi$ is {\em non-crossing} to mean that for every 
$1 \leq i_1 < i_2 < i_3 < i_4 \leq n$ such that $i_1$ is in the 
same block with $i_3$ and $i_2$ is in the same block with $i_4$, 
it necessarily follows that all of $i_1, \ldots , i_4$ are in 
the same block of $\pi$.  

Non-crossing partitions can be naturally depicted with the 
numbers $1, \ldots , n$ drawn either along a line or around 
a circle, as illustrated in Figure 1 below.
\vspace{6pt}

(2) For every $n \in \bN$, we denote by $NC(n)$ the set of 
all non-crossing partitions of $\{ 1, \ldots , n \}$.  This is one
of the many combinatorial structures counted by Catalan numbers:
\begin{equation}   \label{eqn:21a}
| NC(n) | = \Cat_n := \frac{ (2n)! }{ n! (n+1)!}, \ \ \forall 
\, n \in \bN .
\end{equation}   
\end{definition}

$\ $

\begin{definition}  \label{def:22}
Let $n$ be a positive integer.  On $NC(n)$ we consider the partial 
order by {\em reverse refinement}, where for $\pi, \rho \in NC(n)$ 
we put
\begin{equation}   \label{eqn:22a}
( \pi \leq \rho )  \ \ecdef \ \Bigl( \mbox{
every block of $\rho$ is a union of blocks of $\pi$} \Bigr).  
\end{equation}
The partially ordered set $( NC(n), \leq )$ turns out to be a 
lattice.  That is, every $\pi_1 , \pi_2 \in NC(n)$ have a least 
common upper bound, denoted as $\pi_1 \vee \pi_2$, and have a 
greatest common lower bound, denoted as $\pi_1 \wedge \pi_2$.  

We will use the notation $0_n$ for the partition of 
$\{ 1, \ldots , n \}$ into $n$ singleton blocks and the notation 
$1_n$ for the partition of $\{ 1, \ldots , n \}$ into one block.
It is immediate that $0_n, 1_n \in NC(n)$ and that 
$0_n \leq \pi \leq 1_n$ for all $\pi \in NC(n)$.
\end{definition}

\vspace{6pt}

\begin{center}
  \setlength{\unitlength}{0.5cm}
  \begin{picture}(10,3)
  \thicklines
  \put(1,3){\line(0,1){3}}
  \put(1,3){\line(1,0){5}}
  \put(2,4){\line(0,1){2}}
  \put(2,4){\line(1,0){2}}
  \put(3,5){\line(0,1){1}}
  \put(4,4){\line(0,1){2}}
  \put(5,3){\line(0,1){3}}
  \put(6,3){\line(0,1){3}}
  \put(7,3){\line(0,1){3}}
  \put(8,3){\line(0,1){3}}
  \put(8,3){\line(1,0){2}}
  \put(9,4){\line(0,1){2}}
  \put(10,3){\line(0,1){3}}
  \put(0.8,6.2){1}
  \put(1.8,6.2){2}
  \put(2.8,6.2){3}
  \put(3.8,6.2){4}
  \put(4.8,6.2){5}
  \put(5.8,6.2){6}
  \put(6.8,6.2){7}
  \put(7.8,6.2){8}
  \put(8.8,6.2){9}
  \put(9.8,6.2){10}
  \end{picture}
\hspace{0.8cm}
\begin{tikzpicture}[scale=2.2,cap=round,>=latex]
		
		\draw[thick] (0cm,0cm) circle(1cm);
		
		\foreach \x in {18,54,...,342} {
			\filldraw[black] (\x:1cm) circle(0.4pt);
		}
		\draw[black] (90:1cm)--(270:1cm);
		\draw[black] (90:1cm)--(306:1cm);
		\draw[black] (306:1cm)to[out=120,in=90](270:1cm);
		\draw[black] (126:1cm)--(198:1cm);
		\draw[black] (342:1cm)--(54:1cm);

		
		\foreach \x/\xtext in {
			18/3,
			54/2,
			90/1,
			126/10,
			162/9,
			198/8,
			234/7,
			270/6,
			306/5,
			342/4
		}
		\draw (\x:1.25cm) node[] {$\xtext$};
		
\end{tikzpicture}

{\bf Figure 1.}  {\em The partition
$\pi = \{ \, \{ 1,5,6 \}, \, \{ 2,4 \}, \, \{ 3 \}, 
\, \{ 7 \}, \, \{ 8,10 \} \, \{ 9 \} \, \}
\in NC(10)$,}

{\em in linear representation (left) and in circular 
representation (right).}
\end{center}

$\ $

\begin{defrem}   \label{def:23}
{\em (Kreweras complementation map.) }

\noindent
One has a very useful order-reversing bijection 
$K_n : NC(n) \to NC(n)$, called the 
{\em Kreweras complementation map}, which provides an 
anti-isomorphism of the lattice $( NC(n), \leq )$.
The pictorial description of $K_n$ is given by using partitions of 
the set $\{ 1, \ldots , 2n \}$, as follows.  

\vspace{6pt}

$\bullet$ For every $\pi, \sigma \in NC(n)$, let us denote by 
$\piodd \sqcup \sigmaeven$ the partition of $\{ 1, \ldots , 2n \}$ 
which is obtained when we make $\pi$ become a partition of 
$\{ 1,3 , \ldots , 2n-1 \}$ and we make $\sigma$ become a partition 
of $\{ 2, 4, \ldots , 2n \}$, in the natural way.  That is, 
$\piodd \sqcup \sigmaeven$ consists of blocks of the form 
$\{ 2v -1 : v \in V \}$ where $V \in \pi$, and of blocks of the 
form $\{ 2w : w \in W \}$ where $W \in \sigma$.

\vspace{6pt}

$\bullet$ For $\pi, \sigma \in NC(n)$ it is not generally true that
$\piodd \sqcup \sigmaeven$ is a non-crossing partition of 
$\{ 1, \ldots , 2n \}$.  If we fix $\pi \in NC(n)$, then the set
\[
\{ \sigma \in NC(n) : \piodd \sqcup \sigmaeven \in NC(2n) \}
\]
turns out to have a largest element $\sigma_{\mathrm{max}}$ with 
respect to reverse refinement order.  The Kreweras complement 
$K_n ( \pi )$ is, by definition,
this special partition $\sigma_{\mathrm{max}}$.

\vspace{6pt}

\noindent
[As a concrete example: for the partition $\pi$ depicted in 
Figure 1, one finds that $K_{10} ( \pi )$ is the partition 
$\bigl\{ \, \{ 1,4 \}, \, \{ 2,3 \}, \, \{ 5 \}, \, 
\{ 6,7,10 \}, \, \{ 8,9 \}, \, \bigr\} \in NC(10)$.

\vspace{6pt}

It is easily verified (see e.g. pages 146-148 in Lecture 9 
of \cite{NiSp2006}) that the map $\pi \mapsto K_n ( \pi )$ 
described above gives indeed an anti-isomorphism from 
$( NC(n) , \leq )$ to itself.
\end{defrem}

$\ $

\begin{defrem}   \label{def:24}
{\em (M\"obius function.) }

\noindent
For every $n \in \bN$, we will use the notation
$\Moeb_n : \{  ( \pi , \rho ) \in NC(n)^2 : \pi \leq \rho \} \to \bZ$
for the M\"obius function of the lattice $(NC(n) , \leq )$, defined 
via the general M\"obius function machinery used for any finite 
partially ordered set (see e.g. Chapter 3 of the monograph
\cite{St1997}).  One can explicitly write $\Moeb_n ( \pi , \rho )$ as a 
product of signed Catalan numbers (see e.g. \cite{NiSp2006}, pp. 162-167 
in Lecture 10); it is useful for what follows to record that one has in 
particular the formula
\begin{equation}  \label{eqn:24a}
\Moeb_n ( \pi , 1_n) = \Moeb_n ( 0_n , K_n ( \pi ))
=(-1)^{| \pi |-1} \prod_{V \in K_n ( \pi )} \Cat_{|V|-1},
\end{equation}
holding for every $\pi \in NC(n)$ (where the Catalan numbers 
$( \Cat_m )_{m=1}^{\infty}$ are as reviewed in Definition 
\ref{def:21}, and we also make the convention to put $\Cat_0 := 1$).
\end{defrem}

$\ $

For the subsequent considerations related to the framework of 
c-free independence, it is also useful to record the following 
facts about how blocks of a non-crossing partitions are nested 
inside each other.

\begin{defrem}   \label{def:25}
Let $n$ be a positive integer and let $\pi$ be in $NC(n)$.  

\vspace{6pt}

(1) Let $V,W$ be two blocks of $\pi$.  We say that $V$ is 
{\em nested} inside $W$ to mean that one has
\begin{equation}   \label{eqn:25a}
\min (W) < \min (V) \leq \max (V) < \max (W).
\end{equation}
Due to the non-crossing property of $\pi$, it is immediate 
that the condition (\ref{eqn:25a}) is equivalent to the 
apparently weaker requirement that:
\begin{equation}   \label{eqn:25b}
V \neq W, \mbox{ and } \exists \, v \in V \mbox{ such that }
\min (W) < v < \max (W).
\end{equation}

\vspace{6pt}

(2) A block $V$ of $\pi$ is said to be {\em inner} if there 
exists a block $W$ such that $V$ is nested inside $W$.  In 
the opposite case, $V$ is said to be an {\em outer} block
of $\pi$.

\vspace{6pt}

(3) Let $V$ be an inner block of $\pi$.  It is easy to check
(see e.g. Proposition 2.10 in \cite{BeNi2008}) that there exists 
a block $W$ of $\pi$, uniquely determined, with the properties 
that: 

\noindent
(i) $V$ is nested inside $W$, and

\noindent
(ii)  there is no $W' \in \pi$ such that $V$ is nested 
inside $W'$ and $W'$ is nested inside $W$.

\noindent 
We will refer to this $W$ as
the {\em parent-block} of $V$ in $\pi$.
\end{defrem}

$\ $

\subsection{The partial order relations \boldmath{$\ll$} 
and \boldmath{$\sqsubseteq$}.}

$\ $

\noindent
In this paper we also make use of two other partial order 
relations on $NC(n)$, both of them coarser than reverse refinement, 
which are denoted as ``$\ll$'' and as ``$\sqsubseteq$''.  The 
partial order $\ll$ has been used for some time in free 
probability (starting with \cite{BeNi2008}), in the description 
of relations between free and Boolean cumulants.  The partial 
order $\sqsubseteq$ is in a certain sense dual to $\ll$; it was 
introduced in \cite{JV2015}, and its role was thoroughly investigated 
in the recent paper \cite{BiJV2018}, in a more general setting which 
refers to the Bruhat order on Coxeter groups.

$\ $

\begin{defrem}      \label{def:26}
{\em (The partial order ``$\ll$''.)}

\noindent
(1) For $\pi , \rho \in NC(n)$, we will write $\pi \ll \rho$ to mean 
that $\pi \leq \rho$ and that, in addition, for every block $W$ of 
$\rho$ there exists a block $V$ of $\pi$ such that $\min (W), \max (W) \in V$.  

\vspace{6pt}

\noindent
(2) Since in this paper we have a lot of occurrences of the special case
``$\pi \ll 1_n$'', let us record the obvious fact that this simply amounts 
to requiring $\pi$ to have a unique outer block $W$, with $1,n \in W$.
Another immediate fact which is relevant for what follows is that the 
Kreweras complementation map $K_n$ maps the set 
$\{ \pi \in NC(n) : \pi \ll 1_n \}$ onto 
$\{ \sigma \in NC(n) : \{ n \} \mbox{ is a singleton block of $\sigma$} \}$.

\vspace{6pt}

\noindent
(3) In the discussion around $\ll$, a special role is played by 
interval partitions.  A partition $\pi \in NC(n)$ is said to be
an {\em interval partition} when every block $V$ of $\pi$ is of the form
$V = [i,j] \cap \bN$ for some $1 \leq i \leq j \leq n$.  The set of all 
interval partitions of $\{ 1, \ldots , n \}$ will be denoted as 
$\Int (n)$.  It is immediate that $\Int (n)$ is precisely equal to the 
set of maximal elements of the poset $( NC(n), \ll )$.
\end{defrem}

$\ $

\begin{remark}   \label{rem:27}
It will be of relevance for what follows to have some information
about the structure of lower and of upper ideals of the poset 
$( NC(n) , \ll )$.  We review this here, following \cite{BeNi2008}. 

\vspace{6pt}

$\bullet$ {\em Lower ideals.}
Let $\rho$ be a fixed partition in $NC(n)$.  For every block $W \in \rho$
such that $|W| \geq 3$, let us split $W$ into the doubleton block
$\{ \min (W), \max (W) \}$ and $| W | - 2$ singleton blocks; when 
doing this, we obtain a partition $\rho_0 \leq \rho$ in $NC(n)$, 
such that all the blocks of $\rho_0$ have either 1 or 2 elements. From 
Definition \ref{def:26} it is immediate that for $\pi \in NC(n)$ we 
have: $\pi \ll \rho \ \Leftrightarrow \ \rho_0 \leq \pi \leq \rho$.
Thus the lower ideal $\{ \pi \in NC(n) \ : \ \pi \ll \rho \}$
is just the interval $[ \rho_0 , \rho ]$ with respect to reverse
refinement order, for which one has a nice structure 
theorem (as presented for instance in \cite{NiSp2006}, pages 148-150 in 
Lecture 9); in particular, one can explicitly write the cardinality of 
this lower ideal, which is
\begin{equation}   \label{eqn:27a}
| \, \{ \pi \in NC(n) \ : \ \pi \ll \rho \} \, | = 
\prod_{W\in \rho}  \Cat_{|W|-1}.
\end{equation}

\vspace{6pt}

$\bullet$ {\em Upper ideals.}
For a fixed $\pi \in NC(n)$, it turns out that the upper ideal 
$\{ \rho \in NC(n): \pi \ll \rho \}$ can be identified to a Boolean 
lattice, with the rank of any $\rho$ in this lattice being equal 
to $|\pi| - | \rho |$ (see Proposition 2.13 and Remark 2.14 in 
\cite{BeNi2008}).  Clearly, the Boolean lattice 
$\bigl( \, \{ \rho \in NC(n): \pi \ll \rho \} \, , \, \ll \, \bigr)$ 
is trivial if and only if $\pi $ is maximal with respect to $\ll$
in $NC(n)$, that is, if and only if $\pi \in \Int (n)$.
We record here that, as a consequence of the above, one has
\begin{equation}   \label{eqn:27b}
\sum_{  \begin{array}{c}
{\scriptstyle \rho \in NC(n) \ such}   \\
{\scriptstyle that \ \pi \ll \rho}  
\end{array}  } \, (-1)^{| \pi | - | \rho |} 
= \left\{
\begin{array}{ll}
1, &  \text{ if } \pi \in \Int(n) \\
0, &  \text{ otherwise}.
\end{array}\right.
\end{equation} 
\end{remark}

$\ $

\begin{defrem}   \label{def:28}
{\em (The partial order ``$\, \sqsubseteq$''.)}

\noindent
The interval refinement order $\sqsubseteq $ was introduced by 
Josuat-Verg\`es \cite{JV2015}: for $\pi , \rho \in NC(n)$, one writes
$\pi \sqsubseteq \rho$ when $\pi \leq \rho$ and when, in addition, 
the partition induced by $\pi $ on every block of $\rho$ is an interval 
partition. 

One has a form of duality between the partial orders $\sqsubseteq$ 
and $\ll$, implemented by the Kreweras complementation map.   This was 
thoroughly investigated in a recent paper by Biane and Josuat-Verg\'es 
\cite{BiJV2018}, in a more general setting involving the Bruhat order on 
Coxeter groups.  (See Section 4 of \cite{BiJV2018}, particularly 
Propositions 4.1 and 4.9.)  For the reader's convenience, we provide in 
Lemma \ref{lem:210} below a self-contained proof of an instance of this 
duality which will be needed in the Section 4 of the present paper.  In 
the proof of Lemma \ref{lem:210} we will use some natural operations 
``cut/attach'' on non-crossing partitions, defined as follows.
\end{defrem}

\begin{definition}\label{def:29}
Let $\pi\in NC(n)$ and let $i \in \{1,\ldots, n\}$ belong to an inner 
block $P$ of $\pi$.  

\vspace{6pt}

(1) If $i$ is not the maximal element of $P$, then we define 
$\cut(\pi,i)\in NC(n)$ to be the partition obtained from $\pi$ 
when we cut the block $P$ into two interval pieces in order to 
get that 
\[
Q\in \cut(\pi,i)\iff (Q\in\pi\mbox{ and }Q\not=P)\mbox{ or } 
(Q=\{j\in P: j\le i\})\mbox{ or } (Q=\{j\in P: j>i\}).
\]

\vspace{6pt}

(2) We define $\attach(\pi,i)\in NC(n)$ to be the partition obtained from 
$\pi$ when we attach the block $P$ to its parent-block $R$ (defined in the 
way described in Definition \ref{def:25}).  That is, we have 
\[
Q\in \attach (\pi,i) 
\iff  (Q\in\pi\mbox{ and }Q\not\in\{P,R\})\mbox{ or } (Q=P\cup R).
\] 
\end{definition}

\begin{lemma} \label{lem:210} 
The restriction of the Kreweras complementation map $K_n$
gives an anti-isomorphism between the posets
\begin{equation}   \label{eqn:210a}
\left( \left\{\pi\in NC(n) : \pi\ll 1_n\right \} \, , 
\, \sqsubseteq \right)
\mbox{ and } 
\left( \left\{\sigma\in NC(n): \{n\}\in\sigma\right\} \, ,
\, \ll \right).
\end{equation}
\end{lemma}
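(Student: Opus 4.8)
The plan is to verify the three required properties of the restriction of $K_n$ directly, using the dictionary between the combinatorial operations on partitions and their Kreweras complements. First I would check that $K_n$ maps the set $\{\pi \in NC(n) : \pi \ll 1_n\}$ bijectively onto $\{\sigma \in NC(n) : \{n\} \in \sigma\}$; this is exactly the fact already recorded in Remark~\ref{def:26}(2), so no new work is needed for the underlying bijection of sets, and $K_n$ is an involution up to the standard shift, so it is its own inverse in the relevant sense. The real content is that this bijection is order-reversing from $(\,\cdot\,,\sqsubseteq)$ to $(\,\cdot\,,\ll)$, equivalently that for $\pi,\pi' \ll 1_n$ one has $\pi \sqsubseteq \pi' \iff K_n(\pi') \ll K_n(\pi)$.

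The key step is to translate both orders into a common language. I would use the operations $\cut$ and $\attach$ from Definition~\ref{def:29}: the idea is that $\pi \sqsubseteq \pi'$ holds iff $\pi$ can be obtained from $\pi'$ by a sequence of $\cut$ operations (cutting inner blocks into interval pieces), while $\rho \ll \rho'$ holds iff $\rho$ is obtained from $\rho'$ by a sequence of $\attach$ operations — more precisely, by the description in Remark~\ref{rem:27}, $\rho \ll \rho'$ means $\rho'_0 \leq \rho \leq \rho'$ where $\rho'_0$ splits each block into its extreme doubleton plus singletons, so passing from $\rho'$ down toward $\rho'_0$ is exactly repeatedly cutting off interior singletons, and going up is attaching. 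The crucial lemma to isolate and prove is the \emph{local} compatibility statement: for a single elementary move, $K_n$ sends a $\cut$ to an $\attach$ and vice versa. Concretely, I would show that if $i$ belongs to an inner block $P$ of $\pi$ and is not maximal in $P$, then $K_n(\cut(\pi,i)) = \attach(K_n(\pi), j)$ for an appropriate index $j$ determined by $i$ and $P$ (the index $j$ being read off from the position of $i$ inside the Kreweras picture, where the odd-labelled points carry $\pi$ and the even-labelled points carry its complement). This is a finite, pictorial verification using the definition of $K_n$ via $\piodd \sqcup \sigmaeven \in NC(2n)$: cutting an inner block of $\pi$ at $i$ merges two adjacent blocks of the complement across the even point sitting between the two halves.

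Once the local statement is in hand, the global equivalence follows by an induction on the number of elementary moves, together with the observation that the whole chain stays inside the relevant subsets (cutting an inner block of some $\pi \ll 1_n$ keeps $\pi \ll 1_n$ because it does not touch the unique outer block containing $1$ and $n$; correspondingly attaching keeps $\{n\}$ a singleton of the complement, by the second half of Remark~\ref{def:26}(2)). I would then assemble: $\pi \sqsubseteq \pi' \iff \pi$ reached from $\pi'$ by $\cut$'s $\iff K_n(\pi)$ reached from $K_n(\pi')$ by $\attach$'s $\iff K_n(\pi') \ll K_n(\pi)$, which is the desired anti-isomorphism. For completeness I should also note that $K_n$ restricted to either finite set is injective (it is globally a bijection on $NC(n)$) and surjective onto the claimed target by Remark~\ref{def:26}(2), so it is a genuine poset anti-isomorphism and not merely an order-reversing injection.

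The main obstacle I anticipate is the bookkeeping in the local compatibility step — getting the exact correspondence $i \leftrightarrow j$ right, and in particular verifying that the parent-block relation used by $\attach$ on the complement side matches the two-halves-of-$P$ data on the $\cut$ side. This is where one genuinely has to draw the $2n$-point picture: the point $i$ and the next element $i'$ of $P$ after $i$ have, between them on the circle, a unique even point, and that even point lies in the block $W$ of $K_n(\pi)$ that one must identify as the parent-block after attaching. A clean way to control this, rather than arguing from pictures, is to use the characterization of nesting from Remark~\ref{def:25} together with the explicit formula $\Moeb_n(\pi,1_n) = \Moeb_n(0_n,K_n(\pi))$ only as a sanity check on cardinalities; the substance, though, remains the direct identification of blocks, which I would carry out carefully but which presents no conceptual difficulty beyond attention to indices.
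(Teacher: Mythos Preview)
Your approach is essentially the same as the paper's: both reduce the anti-isomorphism to the local compatibility $K_n(\cut(\pi,i)) = \attach(K_n(\pi),j)$ and then propagate to the global order. Two small simplifications the paper makes that resolve your anticipated obstacle: the index is simply $j=i$ (the same $i$, now sitting in an inner block of $K_n(\pi)$), and rather than induct on chains of moves the paper phrases the global step as ``$K_n$ reverses the edges of the Hasse diagrams,'' which is the standard shortcut for showing a bijection of finite posets is an anti-isomorphism.
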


\begin{proof}  
Consider the Hasse diagrams of the two posets indicated in the lemma
(where recall that for any finite poset $( \cP , \prec )$, the corresponding 
Hasse diagram is the graph with vertex-set $\cP$ and with a ``down-edge''
going from $p$ to $p'$ if and only if $p,p'$ are distinct elements of $\cP$
such that $p' \prec p$ and such that there is no element 
$q \in \cP \setminus \{ p, p' \}$ with $p' \prec q \prec p$).
It is easy to verify that the down-edges in these two Hasse diagrams are 
described as follows.

\vspace{6pt}

$\bullet$ In the Hasse diagram of 
$\displaystyle{\left(\left\{\pi\in NC(n) : \pi \ll 1_n\right \}, 
\sqsubseteq \right)}$, 
there is a down edge from $\pi$ to $\pi'$ if and only if for some $i$ 
in an inner block of $\pi$ and not maximal in that block we have that 
$\pi'= \cut(\pi,i)$. 

\vspace{6pt}

$\bullet$ In the Hasse diagram of 
$\displaystyle{\left(\left\{\sigma\in NC(n): \{n\}\in\sigma\right\},
\ll \right)}$, 
there is a down edge from $\rho$ to $\rho'$ if and only if for some $j$ 
in an inner block of $\rho '$ we have $\rho = \attach( \rho',j)$. 

\vspace{6pt}

We have already noticed in Remark \ref{def:26}(2) that $K_n$ provides a 
bijection between the underlying sets of the two posets indicated in 
(\ref{eqn:210a}).  In addition to that, let us make the following 
elementary observation, which holds for any partition $\pi \in NC(n)$ 
with unique outer block: if a number $i \in \{ 1, \ldots , n \}$ belongs 
to an inner block of $\pi$ and is not the maximal element of that block, 
then $i$ belongs to an inner block of $K_n (\pi)$ and one has 
\begin{equation}   \label{eqn:210b}
K_n (\cut(\pi, i)) = \attach( K_n (\pi),i).
\end{equation}
Upon combining (\ref{eqn:210b}) with the explicit descriptions recorded 
above for the edges of the two Hasse diagrams, one finds that $K_n$
reverses the edges in these Hasse diagrams; this implies that $K_n$ is 
indeed a poset anti-isomorphism, as required. 
\end{proof}

$\ $

\section{Review of four types of cumulant functionals}

Throughout this section we fix a vector space $\cV$ over $\bC$, 
and we will work with the space $\fMcV$ of families of multilinear 
functionals introduced in Notation \ref{def:15}.  The goal 
of the section is to review the definitions of four types of 
cumulant functionals which we use later in the paper,
in the framework of a couple $( \cV , \uPhi )$ with 
$\uPhi \in \fMcV$ or in the framework of a triple 
$( \cV , \uPhi, \uPsi )$ with $\uPhi , \uPsi \in \fMcV$.  
All the formulas we will use for definitions are very familiar
to the people working in the area; these definitions are usually 
stated in the case (mentioned as ``motivating example'' in 
Section 1.3) when $\cV$ is a unital algebra -- however, nothing 
changes when we move to the somewhat more general framework chosen 
here.

Before starting, we record a customary notation which will 
appear in the formulas for all four types of cumulants:
given an $n \in \bN$, a tuple
$( x_1, \ldots , x_n ) \in \cV^n$, and a non-empty subset
$M = \{ i_1, \ldots , i_m \} \subseteq \{ 1, \ldots , n \}$
with $i_1 < \cdots < i_m$, we denote
\begin{equation}  \label{eqn:3a}
( x_1, \ldots , x_n ) \mid M 
:= ( x_{i_1}, \ldots , x_{i_m} ) \in \cV^m.
\end{equation}

$\ $

\subsection{Review of free cumulants associated 
to a couple \boldmath{$( \cV , \uPhi )$}.}

$\ $

\noindent
Consider a couple $( \cV , \uPhi )$, where 
$\uPhi = ( \varphi_n )_{n=1}^{\infty} \in \fMcV$.  The 
{\em free cumulants} associated to $( \cV , \uPhi )$ are
the family of multilinear functionals 
$\uKappa = ( \kappa_n )_{n=1}^{\infty} \in \fMcV$ which 
is uniquely determined by the requirement that
\begin{equation}   \label{eqn:31a}
\varphi_n (x_1, \ldots , x_n)  = 
\sum_{\pi \in NC(n)} \prod_{V \in \pi} 
\kappa_{|V|} ( \, (x_1, \ldots , x_n) \mid V \, ), 
\end{equation}
holding for all $n \in \bN$ and $x_1, \ldots , x_n \in \cV$.
The family of Equations (\ref{eqn:31a}) can be solved in 
order to give explicit formulas for the $\kappa_n$'s, which 
come out as follows:
\begin{equation}   \label{eqn:31b}
\kappa_n (x_1, \ldots , x_n) =
\sum_{\pi \in NC(n)} \prod_{V \in \pi} 
\Moeb_n ( \pi , 1_n )
\, \varphi_{|V|} ( \, (x_1, \ldots , x_n) \mid V \, ),  
\end{equation}
for all $n \in \bN$ and $x_1, \ldots , x_n \in \cV$,
where $\Moeb_n ( \pi , 1_n )$ is the value of the M\"obius 
function of $NC(n)$ which was reviewed in Remark \ref{def:24}.

$\ $

\subsection{Review of Boolean cumulants associated
to a couple \boldmath{$( \cV , \uPhi )$}.}

$\ $

\noindent
Consider a couple $( \cV , \uPhi )$, where 
$\uPhi = ( \varphi_n )_{n=1}^{\infty} \in \fMcV$.  The 
{\em Boolean cumulants} associated to $( \cV , \uPhi )$ 
are the family of multilinear functionals 
$\uBeta = ( \beta_n )_{n=1}^{\infty} \in \fMcV$ which 
is uniquely determined by the requirement that
\begin{equation}   \label{eqn:32a}
\varphi_n (x_1, \ldots , x_n) = 
\sum_{\pi \in \Int (n)} \prod_{V \in \pi} 
\beta_{|V|} ( \, (x_1, \ldots , x_n) \mid V \, ),   
\end{equation}
holding for all $n \in \bN$ and $x_1, \ldots , x_n \in \cV$,
where $\Int (n)$ is the set of interval partitions
reviewed in Definition \ref{def:26}(3).
The family of Equations (\ref{eqn:32a}) can be solved 
in order to give explicit formulas for the $\beta_n$'s, 
which come out as follows:
\begin{equation}   \label{eqn:32b}
\beta_n (x_1, \ldots , x_n) = 
\sum_{\pi \in \Int (n)} \prod_{V \in \pi} 
(-1)^{ | \pi | + 1 }
\varphi_{|V|} ( \, (x_1, \ldots , x_n) \mid V \, ),   
\end{equation}
for all $n \in \bN$ and $x_1, \ldots , x_n \in \cV$.
This is analogous to how (\ref{eqn:31a}) was solved
in order to obtain (\ref{eqn:31b}), with an additional 
simplification due to the fact that the M\"obius function 
of $( \Int (n), \leq )$ only takes the values $\pm 1$. 

$\ $

\subsection{Review of infinitesimal free cumulants
associated to a triple \boldmath{$( \cV , \uPhi, \uPhi ')$}.}

$\ $

\noindent
Consider a triple $( \cV , \uPhi, \uPhi ' )$, where 
$\uPhi = ( \varphi_n )_{n=1}^{\infty}$ and 
$\uPhi ' = ( \varphi_n ')_{n=1}^{\infty}$ are in 
$\fMcV$.  The {\em infinitesimal free cumulants} 
associated to $( \cV , \uPhi, \uPhi ' )$ are the 
family of multilinear functionals 
$\uKappa ' = ( \kappa_n ' )_{n=1}^{\infty}$ 
which is uniquely determined by the requirement that
\begin{equation}   \label{eqn:34a}
\varphi_n ' (x_1 , \ldots , x_n)  = 
\end{equation}
\[
\sum_{\pi \in NC(n)} \sum_{V_o \in \pi} 
\kappa_{ | V_o | } '
\bigl( \, (x_1, \ldots , x_n) \mid V_o \bigr)
\cdot  \prod_{  \begin{array}{c}
{\scriptstyle V \in \pi}  \\
{\scriptstyle V \neq V_o}
\end{array} } 
\ \kappa_{ |V|; \uPhi } ( \, (x_1, \ldots , x_n) \mid V ), \\
\]                              
holding for all $n \in \bN$ and $x_1, \ldots , x_n \in \cV$,
where $\uKappa_{\uPhi} =
( \kappa_{n ; \uPhi} )_{n=1}^{\infty}$ 
are the free cumulants associated to $( \cV , \uPhi )$.
The family of Equations (\ref{eqn:34a}) can be solved in order 
to give an explicit formula for $\kappa_n ' (x_1, \ldots , x_n)$, 
which comes out as follows:
\begin{equation}    \label{eqn:34b}
\kappa_n ' (x_1 , \ldots , x_n) = 
\end{equation}
\[
\sum_{\pi \in NC(n)}  \sum_{V_o \in \pi}
\ \Moeb_n ( \pi , 1_n)
\cdot \varphi_{ | V_o | } '
\bigl( \, (x_1, \ldots , x_n) \mid V_o \bigr)
\cdot  \prod_{  \begin{array}{c}
{\scriptstyle V \in \pi}  \\
{\scriptstyle V \neq V_o}
\end{array} } 
\ \varphi_{ |V| } ( \, (x_1, \ldots , x_n) \mid V \bigr),
\]
for all $n \in \bN$ and $x_1, \ldots , x_n \in \cV$.
Here $\Moeb_n ( \pi , 1_n )$ is the same value of the M\"obius 
function on $NC(n)$ which appeared in Equation (\ref{eqn:31b})
above.  

A way to remember the formulas (\ref{eqn:34a}) and 
(\ref{eqn:34b}) is by noting that they are precisely what 
comes out when one performs a ``formal derivative with 
respect to $\varphi$'' in the formulas (\ref{eqn:31a}) and 
(\ref{eqn:31b}) concerning the free cumulant functionals of 
$( \cV , \uPhi )$.  Another way of thinking about Equations
(\ref{eqn:34a}) and (\ref{eqn:34b}) goes by treating the 
double sums on their right-hand sides as single sums over 
certain sets of non-crossing partitions ``of type B'' (cf.
\cite{FeNi2010}, also \cite{Fe2012}; a brief review of this 
point of view is shown in Section 5.1 below).

$\ $

\subsection{Review of c-free cumulants
associated to a triple \boldmath{$( \cV , \uPhi, \uChi )$}.}

$\ $

\noindent
Consider a triple $( \cV , \uPhi, \uChi )$, where 
$\uPhi = ( \varphi_n )_{n=1}^{\infty}$ and 
$\uChi = ( \chi_n )_{n=1}^{\infty}$ are in $\fMcV$.  
The {\em c-free cumulants} associated to 
$( \cV , \uPhi, \uChi )$ are the family of 
multilinear functionals 
$\uKappa^{(c)} = ( \kappa^{(c)}_n )_{n=1}^{\infty}$ 
which is uniquely determined by the requirement that
\begin{equation}   \label{eqn:33a}
\chi_n (x_1 , \ldots , x_n) = 
\end{equation}
\[
\sum_{\pi \in NC(n)}\prod_{  \begin{array}{c}
{\scriptstyle V \in \pi}  \\
{\scriptstyle V \, \text{inner}}
\end{array} } \kappa_{ |V| ; \uPhi } 
\bigl( \, (x_1, \ldots , x_n) \mid V \bigr)
\prod_{  \begin{array}{c}
{\scriptstyle W \in \pi}  \\
{\scriptstyle W \, \text{outer}}
\end{array} } \kappa^{(c)}_{ |W| } 
\bigl( \, (x_1, \ldots , x_n) \mid W \bigr), 
\]                         
holding for all $n \in \bN$ and 
$x_1, \ldots , x_n \in \cV$, where 
$\uKappa_{\varphi} = ( \kappa_{n ; \uPhi} )_{n=1}^{\infty}$ 
are the free cumulants associated to $( \cV , \uPhi )$.

The fact that $\uKappa^{(c)}$ can indeed be defined 
by using Equation (\ref{eqn:33a}) is easily seen when
one isolates the term $\kappa_n^{(c)} (x_1, \ldots , x_n)$
indexed by the partition $1_n \in NC(n)$ on the right-hand 
side of (\ref{eqn:33a}); a recursive argument then shows 
that all the values $\kappa_n^{(c)} (x_1, \ldots , x_n)$ 
are uniquely determined in terms the functionals in
$\uChi$ and $\uKappa_{\uPhi}$ (where the latter ones 
can be calculated from knowing $\uPhi$).  This way of 
defining c-free cumulants goes back to \cite{BoLeSp1996}, 
and is the one commonly used in the research literature 
on c-freeness.

It is not so straightforward to find some nicely structured 
formulas which give the $\kappa_n^{(c)}$'s explicitly, in 
terms of $\uPhi$ and $\uChi$ (analogously to what we had in 
Equations (\ref{eqn:31b}), (\ref{eqn:32b}) and (\ref{eqn:34b}) 
of the preceding subsections).  An interesting aspect of the 
c-free theory is that one can still develop an approach based 
on certain lattices of partitions ``of type $B-opp$'', 
found in \cite{Ca2000}, only that
the cumulant functionals resulting from that approach don't 
coincide with the customary $\kappa_n^{(c)}$'s from 
(\ref{eqn:33a}).  We will elaborate on this point in 
Section 5 below.  Before that, in Section 4 we will present 
a direct derivation of an explicit formula relevant for the 
proof of our Theorem \ref{thm:17}, which expresses 
$\kappa_n^{(c)}$ in terms of $\uPhi$ and 
$\uBeta_{\uChi}$ ( = the family of Boolean cumulants
associated to $( \cV , \uChi )$).

$\ $

\section{An explicit formula for c-free cumulants}

Throughout this section we fix a vector space $\cV$
and two families of functionals 
$\uPhi = ( \varphi_n )_{n=1}^{\infty}$ and
$\uChi = ( \chi_n )_{n=1}^{\infty}$
picked from the space $\fMcV$ of Notation \ref{def:15}.
We consider the family of c-free cumulants 
$\uKappa^{(c)} = ( \kappa_n^{(c)} )_{n=1}^{\infty}
\in \fMcV$ associated to the triple 
$( \cV , \uPhi, \uChi )$ in the way described in 
Section 3.4.  In the present section we put into evidence an 
explicit formula for $\kappa_n^{(c)} (x_1, \ldots , x_n)$, 
stated in the next proposition.

\begin{proposition} \label{prop:41}
Consider the notations introduced above, and let us also
consider the family 
$\uBeta_{\uChi} = ( \beta_{n ; \uChi} )_{n=1}^{\infty}$
of Boolean cumulants associated to $( \cV , \uChi )$.
For every $n \in \bN$ and $x_1, \ldots , x_n \in \cV$ 
one has:
\begin{equation}    \label{eqn:41a}
\kappa_n^{(c)} (x_1, \ldots , x_n)  = 
\end{equation}
\[
\sum_{ \begin{array}{c}
{\scriptstyle \pi \in NC(n),}  \\
{\scriptstyle \pi \ll 1_n}
\end{array} } \ \Moeb_n ( \pi , 1_n)
\cdot \beta_{ | V_o ( \pi ) | ; \uChi }
\bigl( \, (x_1, \ldots , x_n) \mid V_o ( \pi ) \bigr)
\cdot  \prod_{  \begin{array}{c}
{\scriptstyle V \in \pi}  \\
{\scriptstyle V \neq V_o ( \pi )}
\end{array} } \ \varphi_{ |V| } 
\bigl( \, (x_1, \ldots , x_n) \mid V \bigr),
\]
where (following the notations from Section 2) we write 
``$\pi \ll 1_n$'' to mean that $\pi$ has a unique outer 
block, and where for $\pi \ll 1_n$ we denote the unique 
outer block of $\pi$ as $V_o ( \pi )$.
\end{proposition}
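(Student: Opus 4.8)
The plan is to reduce Proposition~\ref{prop:41} to a single combinatorial cancellation, after first isolating an elementary recursion that links $\uBeta_{\uChi}$, the family $\uKappa^{(c)}$, and the free cumulants of $(\cV,\uPhi)$.

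\emph{Step 1 (an auxiliary ``$\ll 1_n$'' identity).} I would begin from the standard fact that, for any $\pi\in NC(n)$, the intervals $[\min W,\max W]$ attached to the outer blocks $W$ of $\pi$ tile $\{1,\dots,n\}$ into consecutive pieces; thus $NC(n)$ is in bijection with the data of an interval partition $\sigma\in\Int(n)$ together with, for each $I\in\sigma$, a non-crossing partition of $I$ having a \emph{unique} outer block spanning $I$. Feeding this decomposition into~(\ref{eqn:33a}), noting that every inner block of such a constituent lies inside a maximal gap of its outer block, and collapsing the non-crossing sum over each such gap $G$ by means of $\varphi_{|G|}=\sum_{\tau\in NC(G)}\prod_{V\in\tau}\kappa_{|V|;\uPhi}$ (read on the appropriate sub-tuples; this is~(\ref{eqn:31a})), one rewrites the right side of~(\ref{eqn:33a}) as $\sum_{\sigma\in\Int(n)}\prod_{I\in\sigma}\Theta_{|I|}\bigl((x_1,\dots,x_n)\mid I\bigr)$, where
\[
\Theta_n(x_1,\dots,x_n)=\sum_{\substack{W\subseteq\{1,\dots,n\}\\ 1,\,n\,\in\,W}}\ \kappa^{(c)}_{|W|}\bigl((x_1,\dots,x_n)\mid W\bigr)\prod_{G\ \text{maximal gap of}\ W}\varphi_{|G|}\bigl((x_1,\dots,x_n)\mid G\bigr).
\]
Comparing with the Boolean moment--cumulant relation~(\ref{eqn:32a}) and using uniqueness of Boolean cumulants forces $\Theta_n=\beta_{n;\uChi}$ for all $n$; call this identity $(\ast)$.

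\emph{Step 2 (reduction to a summation identity).} Since $\uKappa^{(c)}$ is the \emph{unique} family satisfying~(\ref{eqn:33a}), hence also $(\ast)$, it suffices to substitute the right-hand side of~(\ref{eqn:41a}) for $\kappa^{(c)}$ in $(\ast)$ and to check that $\beta_{n;\uChi}$ is recovered. After this substitution, a pair $(W,\rho)$ made of a subset $W\ni 1,n$ and a non-crossing partition $\rho$ of $W$ with a unique outer block reassembles into a single $\pi:=\rho\cup\{\text{maximal gaps of }W\}\in NC(n)$ (one checks non-crossingness), again with a unique outer block, equal to that of $\rho$; the contribution of $(W,\rho)$ is then exactly $\Moeb_{|W|}(\rho,1_W)$ times the ``decoration'' $\beta_{|V_o(\pi)|;\uChi}\bigl((x_1,\dots,x_n)\mid V_o(\pi)\bigr)\prod_{V\in\pi,\,V\neq V_o(\pi)}\varphi_{|V|}\bigl((x_1,\dots,x_n)\mid V\bigr)$ that appears in~(\ref{eqn:41a}). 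Conversely, the pairs producing a fixed $\pi$ are parametrised by a set $\cS$ of non-outer blocks of $\pi$ that are pairwise non-adjacent intervals, via $W=\{1,\dots,n\}\setminus\bigcup\cS$ and $\rho=$ the partition induced by $\pi$ on $W$. Since (as $\uPhi,\uChi$ vary) the decorations attached to distinct $\pi$ are linearly independent, and since $\beta_{n;\uChi}$ is itself the decoration of $\pi=1_n$, identity $(\ast)$ collapses to the assertion that for every $\pi\in NC(n)$ with a unique outer block one has
\[
\sum_{\cS}\ \Moeb_{|W|}\bigl(\rho,1_W\bigr)\ =\ \begin{cases}1,& \pi=1_n,\\[2pt] 0,& \pi\neq 1_n,\end{cases}
\]
the sum being over the admissible families $\cS$ just described, with $W=W(\cS)$ and $\rho=\rho(\cS)$.

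\emph{Step 3 (the main obstacle).} The case $\pi\neq 1_n$ is the heart of the proof and is a genuine cancellation, since each summand is a nonzero signed product of Catalan numbers by~(\ref{eqn:24a}). I expect to dispose of it by passing to the Kreweras complement: via Lemma~\ref{lem:210} and the $\cut/\attach$ formalism of Definition~\ref{def:29}, the family of admissible $\cS$ together with the weights $\Moeb_{|W|}(\rho,1_W)$ is transported to a signed sum over an interval of the poset $\bigl(\{\sigma\in NC(n):\{n\}\in\sigma\},\ll\bigr)$, which factors through the Boolean-lattice upper ideals of $\ll$ recalled in Remark~\ref{rem:27}; the alternating sum~(\ref{eqn:27b}) then yields the vanishing and singles out the surviving index $\pi=1_n$. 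Getting this translation exactly right --- matching ``maximal gaps of $W$'' on one side against the interval-refinement order $\sqsubseteq$ on the other, and tracking the multiplicativity of $\Moeb$ over Kreweras blocks --- is the delicate point; everything before it is bookkeeping, and the base case $n=1$, where $\kappa^{(c)}_1=\chi_1=\beta_{1;\uChi}$, is immediate.
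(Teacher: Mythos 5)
Your Steps 1 and 2 are sound, and they constitute a genuinely different reduction from the paper's. The auxiliary identity $(\ast)$ — that $\beta_{n;\uChi}(x_1,\dots,x_n)$ equals the sum over subsets $W\ni 1,n$ of $\kappa^{(c)}_{|W|}\bigl((x_1,\dots,x_n)\mid W\bigr)$ decorated by moments of $\uPhi$ on the maximal gaps of $W$ — does follow from (\ref{eqn:33a}), (\ref{eqn:31a}) and the uniqueness in (\ref{eqn:32a}); it determines $\uKappa^{(c)}$ recursively; and your correspondence $(W,\rho)\leftrightarrow(\pi,\cS)$ correctly reduces Proposition \ref{prop:41} to the vanishing statement at the end of Step 2. (The paper instead passes through two intermediate formulas: first $\kappa^{(c)}_n$ as a $(-1)^{|\pi|-1}$-signed sum over $\{\pi\ll 1_n\}$ of Boolean cumulants of $\uChi$ and of $\uPhi$, proved via (\ref{eqn:27b}); then a conversion of the $\beta_{\,\cdot\,;\uPhi}$-factors into $\varphi$-factors carrying the M\"obius weights, proved via $\sqsubseteq$, Lemma \ref{lem:210} and (\ref{eqn:27a}).)

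The genuine gap is Step 3, which you yourself flag as unresolved, and which is where the entire content of the proposition now sits. The identity
\[
\sum_{\cS}\ \Moeb_{|W(\cS)|}\bigl(\rho(\cS),1_{W(\cS)}\bigr)\ =\ \delta_{\pi,1_n}
\]
is true (e.g.\ for $\pi=\{\{1,5\},\{2\},\{3\},\{4\}\}$ the five admissible $\cS$ contribute $-5+2+2+2-1=0$), but the mechanism you propose does not apply as stated: Equation (\ref{eqn:27b}) is an alternating sum of $\pm 1$ over an upper ideal of $\ll$, whereas your summands are full M\"obius values $(-1)^{|\rho|-1}\prod_{V\in K(\rho)}\Cat_{|V|-1}$, and the index set of admissible $\cS$ (sets of pairwise non-adjacent interval blocks of $\pi$) is not an upper or lower ideal of $\ll$ or of $\sqsubseteq$ in any visible way. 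To make this work you would first have to expand each weight as a signed count — e.g.\ $\Moeb_{|W|}(\rho,1_W)=(-1)^{|\rho|-1}\bigl|\{\tau:\rho\sqsubseteq\tau\}\bigr|$, which is exactly Lemma \ref{lem:210} combined with (\ref{eqn:27a}) — and then reorganize the resulting double sum so that (\ref{eqn:27b}) can be applied blockwise; that is essentially the content of Lemmas \ref{lem:42}--\ref{lem:44} of the paper, carried out in a different order. As written, the decisive cancellation is asserted, not proved.
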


Proposition \ref{prop:41} solves the implicit equations 
(\ref{eqn:33a}) of the preceding section in a somewhat 
non-canonical way, which is however what is needed for 
going towards the connection with infinitesimal free 
cumulants.  (As the reader may notice, the expression 
on the right-hand side of (\ref{eqn:41a}) bears some 
resemblance with the formula for free infinitesimal 
cumulants reviewed in Equation (\ref{eqn:34b}), with
the Boolean cumulants of $\uChi$ appearing in the place 
where we would want to have occurrences of $\uPhi '$.)

In order to prove Proposition \ref{prop:41}, we will use
several lemmas.

\begin{lemma}   \label{lem:42}
Let $\uKappa_{\uPhi} 
     = ( \kappa_{n ; \uPhi} )_{n=1}^{\infty}$ and 
$\uBeta_{\uPhi} = ( \beta_{n ; \uPhi} )_{n=1}^{\infty}$
denote the free and respectively the Boolean cumulants 
associated to $( \cV , \uPhi )$.  Then for every 
$n \in \bN$ and $x_1, \ldots , x_n \in \cV$ one has
\begin{equation}  \label{eqn:42a}
\kappa_{n; \uPhi} (x_1, \ldots , x_n)
= \sum_{ \begin{array}{c}
{\scriptstyle \pi \in NC(n),}  \\
{\scriptstyle \pi \ll 1_n}
\end{array} } \ (-1)^{  | \pi | - 1 }
\prod_{V \in \pi} \beta_{ |V| ; \uPhi } 
( \, (x_1, \ldots , x_n) \mid V ).
\end{equation}
\end{lemma}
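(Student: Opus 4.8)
The relation (\ref{eqn:42a}) is, in essence, a known identity between free and Boolean cumulants (see \cite{BeNi2008}), and the plan is to reprove it self-containedly in two steps. The first step is to establish the ``reverse'' identity
\[
\beta_{n;\uPhi}(x_1,\ldots,x_n)\ =\ \sum_{\substack{\pi\in NC(n)\\ \pi\ll 1_n}}\ \prod_{V\in\pi}\kappa_{|V|;\uPhi}\bigl((x_1,\ldots,x_n)\mid V\bigr),
\]
which writes $\uBeta_{\uPhi}$ in terms of $\uKappa_{\uPhi}$ through the order $\ll$; the second step is to invert this by M\"obius inversion in the poset $(NC(n),\ll)$, whose relevant structure is recorded in Remark \ref{rem:27}.

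For the first step I would substitute the free moment--cumulant expansion (\ref{eqn:31a}) of each $\varphi_{|W|}$ into the Boolean formula (\ref{eqn:32b}) for $\beta_n$. A choice of an interval partition $\sigma\in\Int(n)$ together with a non-crossing partition of each block of $\sigma$ is the same datum as a single $\rho\in NC(n)$ with $\rho\leq\sigma$; re-summing accordingly gives
\[
\beta_{n;\uPhi}(x_1,\ldots,x_n)\ =\ \sum_{\rho\in NC(n)}\ \Bigl(\prod_{V\in\rho}\kappa_{|V|;\uPhi}\bigl((x_1,\ldots,x_n)\mid V\bigr)\Bigr)\ \sum_{\substack{\sigma\in\Int(n)\\ \sigma\geq\rho}}(-1)^{|\sigma|-1}.
\]
The interval partitions $\sigma\geq\rho$ are exactly the coarsenings of the finest interval partition $\rho^{\mathrm{int}}\geq\rho$, so they form a Boolean lattice (indexed by the ``gaps'' of $\rho^{\mathrm{int}}$) and the inner alternating sum is $1$ when $\rho^{\mathrm{int}}=1_n$ and $0$ otherwise. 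Finally $\rho^{\mathrm{int}}=1_n$ holds precisely when $\rho$ has a single outer block -- which then necessarily contains $1$ and $n$ -- i.e. precisely when $\rho\ll 1_n$; this proves the identity. (Alternatively, one groups $NC(n)$ according to the outer-block structure of $\pi$ -- the intervals spanned by the successive outer blocks of $\pi$ form a $\sigma\in\Int(n)$, and $\pi$ is the block-wise $\ll 1$ partition it induces -- and compares the resulting expansion of $\varphi_n$ with (\ref{eqn:32a}), invoking uniqueness of Boolean cumulants.)

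For the second step, note that $\ll$ is ``local'': by Definition \ref{def:26}(1), $\pi\ll\rho$ iff $\pi\leq\rho$ and $\pi|_W\ll 1_W$ for every block $W$ of $\rho$. Applying the first-step identity inside each block of an arbitrary $\rho\in NC(n)$ and using multiplicativity of $\uBeta_{\uPhi}$ and $\uKappa_{\uPhi}$ over blocks gives
\[
\prod_{W\in\rho}\beta_{|W|;\uPhi}\bigl((x_1,\ldots,x_n)\mid W\bigr)\ =\ \sum_{\substack{\pi\in NC(n)\\ \pi\ll\rho}}\ \prod_{V\in\pi}\kappa_{|V|;\uPhi}\bigl((x_1,\ldots,x_n)\mid V\bigr),\qquad\rho\in NC(n),
\]
so that, for each fixed $(x_1,\ldots,x_n)$, the $NC(n)$-function $\rho\mapsto\prod_{W\in\rho}\beta_{|W|;\uPhi}(\cdots)$ is the zeta-transform, with respect to $\ll$, of $\rho\mapsto\prod_{V\in\rho}\kappa_{|V|;\uPhi}(\cdots)$. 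M\"obius inversion in the locally finite poset $(NC(n),\ll)$, read off at $\rho=1_n$, then expresses $\kappa_{n;\uPhi}(x_1,\ldots,x_n)$ as a sum over $\{\pi\ll 1_n\}$ of the products $\prod_V\beta_{|V|;\uPhi}(\cdots)$, the $\pi$-coefficient being the value at $(\pi,1_n)$ of the M\"obius function of $(NC(n),\ll)$. By Remark \ref{rem:27} the upper ideal $\{\rho:\pi\ll\rho\}$ is a Boolean lattice in which $1_n$ has rank $|\pi|-1$, so that coefficient equals $(-1)^{|\pi|-1}$, and (\ref{eqn:42a}) follows.

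The only genuinely non-routine point is the first step: recognizing that free and Boolean cumulants are interchanged by exactly the partial order $\ll$, and correctly pinning down the index set as $\{\pi\ll 1_n\}$ -- the non-crossing partitions whose outer blocks jointly span all of $\{1,\ldots,n\}$. After that, what remains is the elementary alternating-sum evaluation over interval partitions and the Boolean-lattice description of upper ideals of $(NC(n),\ll)$ already recorded in Remark \ref{rem:27}.
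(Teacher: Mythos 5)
Your proof is correct. The paper does not actually supply an argument for this lemma --- it only points to Proposition 3.9 of \cite{BeNi2008} and asks the reader to adapt it --- and your two-step derivation (first $\beta_{n;\uPhi}=\sum_{\pi\ll 1_n}\prod_{V\in\pi}\kappa_{|V|;\uPhi}$ by substituting (\ref{eqn:31a}) into (\ref{eqn:32b}) and collapsing the alternating sum over interval coarsenings of the outer-block structure, then M\"obius inversion in $(NC(n),\ll)$) is precisely the intended adaptation, carried out in full. Both steps check out: the inner alternating sum in Step 1 is the M\"obius computation in the Boolean lattice of interval partitions above $\rho^{\mathrm{int}}$, and the block-wise gluing in Step 2 is legitimate because $\pi\ll\rho$ is indeed equivalent to $\pi\le\rho$ together with $\pi|_W\ll 1_W$ for every $W\in\rho$. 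The one point worth spelling out in the inversion step is that, for $\pi\ll 1_n$, the interval $[\pi,1_n]$ of $(NC(n),\ll)$ coincides with the whole upper ideal $\{\rho:\pi\ll\rho\}$ (if $\pi\ll\rho$ then the block of $\pi$ containing $1$ and $n$ sits inside a block of $\rho$, so $\rho\ll 1_n$ automatically), whence $1_n$ is the top of the Boolean lattice of Remark \ref{rem:27}, has rank $|\pi|-1$ there, and the coefficient is $(-1)^{|\pi|-1}$ as you claim.
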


\begin{proof}
This connection between free and Boolean cumulants is well-known.
The derivation of Equation (\ref{eqn:42a}) can for instance 
be obtained by an immediate adaptation of the argument proving 
Proposition 3.9 of \cite{BeNi2008}.
\end{proof}

In a somewhat different formulation, 
Equation (\ref{eqn:43a}) of the next lemma was obtained in 

\noindent
\cite[Eqn.(61) in Section 6]{EFPa2018}, as an application 
of the shuffle algebra approach to c-free cumulants 
developed in that paper.  We present here a direct proof
of Equation (\ref{eqn:43a}), relying on some basic 
properties of the partial order $\ll$ on $NC(n)$.

\begin{lemma}     \label{lem:43}
Consider the framework and notations introduced above.  For 
every $n \in \bN$ and $x_1, \ldots , x_n \in \cV$, one has:
\begin{equation}    \label{eqn:43a}
\kappa^{(c)}_n (x_1, \ldots , x_n) =
\end{equation}
\[
\sum_{ \begin{array}{c}
{\scriptstyle \pi \in NC(n),}  \\
{\scriptstyle \pi \ll 1_n}
\end{array} } \ (-1)^{ | \pi | - 1 }
\cdot \beta_{ | V_o ( \pi ) | ; \uChi }
( \, (x_1, \ldots , x_n) \mid V_o )
\cdot  \prod_{  \begin{array}{c}
{\scriptstyle V \in \pi}  \\
{\scriptstyle V \neq V_o ( \pi )}
\end{array} } \ \beta_{ |V| ; \uPhi }
( \, (x_1, \ldots , x_n) \mid V ).
\]
\end{lemma}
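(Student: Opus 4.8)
The plan is to start from the defining recursion (\ref{eqn:33a}) for the c-free cumulants and to convert the two product ranges (inner blocks carrying $\kappa_{\uPhi}$, outer blocks carrying $\kappa^{(c)}$) into sums over $\ll$-small partitions, by expanding each free cumulant in terms of Boolean cumulants via Lemma \ref{lem:42}. Concretely, I would argue by strong induction on $n$. Fix $x_1,\ldots,x_n$ and abbreviate the claimed right-hand side of (\ref{eqn:43a}) as $R_n$. Isolating in (\ref{eqn:33a}) the term indexed by $1_n$ gives
\[
\chi_n(x_1,\ldots,x_n)=\kappa^{(c)}_n(x_1,\ldots,x_n)
+\sum_{\substack{\pi\in NC(n)\\ \pi\neq 1_n}}
\prod_{\substack{V\in\pi\\ V\ \mathrm{inner}}}\kappa_{|V|;\uPhi}
\bigl((x_1,\ldots,x_n)\mid V\bigr)
\prod_{\substack{W\in\pi\\ W\ \mathrm{outer}}}\kappa^{(c)}_{|W|}
\bigl((x_1,\ldots,x_n)\mid W\bigr),
\]
so it suffices to show that $R_n$ satisfies exactly this same relation with $\kappa^{(c)}_n$ replaced by $R_n$; by uniqueness of the solution to the recursion, this forces $R_n=\kappa^{(c)}_n(x_1,\ldots,x_n)$.

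The computational heart is to substitute, on the one hand, the expansion (\ref{eqn:42a}) of $\kappa_{|V|;\uPhi}$ into each inner block $V$ of every $\pi$ appearing in the sum above, and on the other hand the inductive form of (\ref{eqn:43a}) into each outer block $W$ with $|W|<n$; then one reassembles the resulting nested sums of Boolean cumulants into a single sum over partitions $\rho\in NC(n)$ with $\rho\ll 1_n$. The point is that refining each inner block of $\pi$ by a $\ll 1_{|V|}$-small partition, and refining each outer block of $\pi$ by a partition with a unique outer sub-block, produces exactly a partition $\rho\ll 1_n$ together with a distinguished outer block $V_o(\rho)$ (coming from the outer block of $\pi$ that contains both $1$ and $n$ — here I would use Remark \ref{def:26}(2), which identifies $\ll 1_n$ with "unique outer block containing $1$ and $n$", and the parent-block bookkeeping from Definition and Remark \ref{def:25}). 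One then checks that the accompanying sign is $(-1)^{|\rho|-1}$ by multiplying the local signs $(-1)^{|\text{restriction}|-1}$ across the blocks of $\pi$. I would also need the complementary bookkeeping when I plug $R_n$ into the recursion: the term $\chi_n$ on the left is recovered from the $\rho=1_n$ contribution to (\ref{eqn:32a}) (Boolean moment–cumulant formula), and all the other $\rho$ re-group correctly into the sum over $\pi\neq 1_n$.

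The main obstacle I expect is the combinatorial reassembly step: making precise the bijection between
\[
\Bigl\{(\pi,\ \text{a }\ll 1_{|V|}\text{-partition of each inner }V,\ \text{a "unique-outer-block" partition of each outer }W)\Bigr\}
\]
and $\{\rho\in NC(n):\rho\ll 1_n\}$, and verifying that the distinguished outer block tracks correctly and that the signs match. This is essentially a careful induction-compatible version of the structure theory of the intervals $[\rho_0,\rho]$ recalled in Remark \ref{rem:27}, applied blockwise; the non-crossing condition guarantees that refinements inside distinct blocks of $\pi$ never interact, so the global $\rho$ is non-crossing and its outer/inner block structure is the concatenation of the local ones. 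Once this bijection and the sign count are in hand, the identification of $R_n$ with $\kappa^{(c)}_n(x_1,\ldots,x_n)$ via the uniqueness of the recursion is immediate, completing the proof.
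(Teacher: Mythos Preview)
Your overall strategy --- define $R_n$ as the right-hand side of (\ref{eqn:43a}) and verify that the family $(R_n)_{n\ge 1}$ satisfies the defining recursion (\ref{eqn:33a}) --- is exactly the strategy the paper uses. The gap is in your combinatorial reassembly, which is not correct as stated.

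When you expand $\kappa_{|V|;\uPhi}$ on each inner block $V\in\pi$ via Lemma \ref{lem:42}, and expand $R_{|W|}$ on each outer block $W\in\pi$ via its definition, the resulting refinement $\rho$ satisfies $\rho\ll\pi$, \emph{not} $\rho\ll 1_n$. A general $\pi\in NC(n)$ need not have a block containing both $1$ and $n$, so your parenthetical ``the outer block of $\pi$ that contains both $1$ and $n$'' does not make sense in general, and there is no bijection between the data you describe and $\{\rho\in NC(n):\rho\ll 1_n\}$. Relatedly, $\chi_n$ is not recovered from ``the $\rho=1_n$ contribution'': it is a Boolean \emph{moment}, hence a sum over all of $\Int(n)$.

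What actually happens (and what the paper does) is this: the cross-multiplication produces, for each fixed $\pi$, a sum over $\rho\ll\pi$ of $(-1)^{|\rho|-|\pi|}$ times a product that depends only on $\rho$ (outer blocks of $\rho$ carry $\beta_{\uChi}$, inner blocks carry $\beta_{\uPhi}$). Summing over $\pi$ and exchanging the order of summation leaves, for each fixed $\rho$, the alternating sum $\sum_{\pi\gg\rho}(-1)^{|\rho|-|\pi|}$, which by the Boolean-lattice structure of upper $\ll$-ideals (Equation (\ref{eqn:27b})) equals $1$ if $\rho\in\Int(n)$ and $0$ otherwise. Since all blocks of an interval partition are outer, the surviving sum is precisely $\sum_{\rho\in\Int(n)}\prod_{W\in\rho}\beta_{|W|;\uChi}=\chi_n$. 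So the missing ingredient in your plan is this cancellation over $\{\pi:\pi\gg\rho\}$; once you insert it, the induction wrapper becomes unnecessary and the argument is exactly the paper's.
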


\begin{proof}
For every $n \in \bN$ and $x_1, \ldots , x_n \in \cV$, we denote the
right-hand side of \eqref{eqn:43a} as $\lambda_n (x_1, \ldots , x_n)$. 
In this way we define a family of multilinear functionals 
$\uLambda = ( \lambda_n )_{n=1}^{\infty} \in \fMcV$, and the statement of
the lemma amounts to proving that $\uLambda = \uKappa^{(c)}$.  In order to
obtain this equality, it suffices to prove that $\uLambda$ satisfies the 
family of equations (\ref{eqn:33a}) which were used to define $\uKappa^{(c)}$.
That is, we have to verify the family of equalities of the form
\begin{equation}    \label{eqn:43b}
\chi_n (x_1, \ldots , x_n) =
\end{equation}
\[
\sum_{\pi \in NC(n)}  \prod_{  \begin{array}{c}
{\scriptstyle V \in \pi}  \\
{\scriptstyle V \, \text{inner}}
\end{array} } \kappa_{ |V| ; \uPhi } 
( \, (x_1, \ldots , x_n) \mid V )
\prod_{  \begin{array}{c}
{\scriptstyle W \in \pi}  \\
{\scriptstyle W \, \text{outer}}
\end{array} } \lambda_{ |W| } 
( \, (x_1, \ldots , x_n) \mid W ).
\]
For the remaining part of the proof we fix an $n \in \bN$ and some 
$x_1, \ldots x_n \in \cV$, for which we will verify that (\ref{eqn:43b})
holds.  

We start from the sum on the right-hand side of (\ref{eqn:43b}). Let us 
fix for the moment a $\pi \in NC(n)$, and let us focus on the term 
indexed by $\pi$ in the said sum.  This term can be itself written as 
a sum, if we take the following steps:
\begin{equation}   \label{eqn:43c}
\left\{  \begin{array}{l}
\mbox{-- for every inner block $V$ of $\pi$, we replace
      $\kappa_{|V|; \uPhi} ( \, (x_1, \ldots , x_n) \mid V) \, )$ as a}     \\
\mbox{ sum $\sum_V$ indexed by 
      $\{ \rho_V \in NC(|V|) : \rho_V$ has unique outer block$\}$, by }     \\
\mbox{ using Lemma \ref{lem:42};}                                           \\
                                                                            \\
\mbox{-- for every outer block $W$ of $\pi$, we replace
      $\lambda_{|W|} ( \, (x_1, \ldots , x_n) \mid W) \, )$ as}             \\
\mbox{ a sum $\sum_W$ indexed by 
      $\{ \rho_W \in NC(|W|) : \rho_W$ has unique outer block$\}$,}         \\
\mbox{ by using the definition of $\lambda_{|W|}$;}                         \\
                                                                            \\
\mbox{-- we cross-multiply all the sums $\sum_V$ and $\sum_W$ found
       in the preceding }                                               \\
\mbox{ two steps.}
\end{array}  \right.      
\end{equation}
If one chooses a partition $\rho_V$ for every inner block $V \in \pi$ 
and a partition $\rho_W$ for every outer block $W \in \pi$ in the way 
indicated in (\ref{eqn:43c}), then putting all these $\rho_V, \rho_W$ 
together results in a partition $\rho \in NC(n)$ such that 
$\rho \ll \pi$, with ``$\ll$'' being the partial order discussed in 
Remark \ref{def:26}.  We leave it as an exercise to the reader to write down 
the general term of the summation over 
$\{ \rho \in NC(n) : \rho \ll \pi \}$ which is produced by the 
third step of (\ref{eqn:43c}), and to conclude that what one gets is 
the following formula:
\begin{equation}    \label{eqn:43d}
\prod_{  \begin{array}{c}
{\scriptstyle V \in \pi}  \\
{\scriptstyle V \, \text{inner}}
\end{array} } \kappa_{ |V| ; \uPhi } 
( \, (x_1, \ldots , x_n) \mid V )
\prod_{  \begin{array}{c}
{\scriptstyle W \in \pi}  \\
{\scriptstyle W \, \text{outer}}
\end{array} } \lambda_{ |W| } 
( \, (x_1, \ldots , x_n) \mid W )
\end{equation}
\[
= \sum_{\rho \ll \pi}  (-1)^{|\rho|-|\pi|} 
\prod_{  \begin{array}{c}
{\scriptstyle V \in \rho}  \\
{\scriptstyle V \, \text{inner}}
\end{array} } \beta_{ |V|; \uPhi } ( \, (x_1, \ldots , x_n) \mid V )
\prod_{  \begin{array}{c}
{\scriptstyle W \in \rho}  \\
{\scriptstyle W \, \text{outer}}
\end{array} } \beta_{ |W|; \uChi } ( \, (x_1, \ldots , x_n) \mid W ).
\]

We now let $\pi$ run in $NC(n)$.  Returning to the equality to be 
proved, Equation (\ref{eqn:43b}), we see that its right-hand side 
(obtained by summing over $\pi$ in (\ref{eqn:43d})) is equal to
\begin{equation}    \label{eqn:43e}
\sum_{\pi \in NC(n)}  \sum_{\rho \ll \pi}  (-1)^{|\rho|-|\pi|} 
\prod_{  \begin{array}{c}
{\scriptstyle V \in \rho}  \\
{\scriptstyle V \, \text{inner}}
\end{array} } \beta_{ |V|; \uPhi } ( \, (x_1, \ldots , x_n) \mid V )
\cdot \prod_{  \begin{array}{c}
{\scriptstyle W \in \rho}  \\
{\scriptstyle W \, \text{outer}}
\end{array} } \beta_{ |W|; \uChi } ( \, (x_1, \ldots , x_n) \mid W ).
\end{equation}
We must verify that the quantity in
(\ref{eqn:43e}) is equal to $\chi_n (x_1, \ldots , x_n)$.  To this 
end, we exchange the order of summation over $\pi$ and $\rho$, so 
that (\ref{eqn:43e}) becomes:
\[
\sum_{\rho \in NC(n)}
\bigg( \sum_{   \begin{array}{c}
{\scriptstyle \pi \in NC(n), }  \\
{\scriptstyle \pi \gg \rho}
\end{array} }  (-1)^{|\rho|-|\pi|} \bigg)
\prod_{  \begin{array}{c}
{\scriptstyle V \in \rho}  \\
{\scriptstyle V \, \text{inner}}
\end{array} } 
\beta_{ |V|; \uPhi } \bigl( \, (x_1, \ldots , x_n) \mid V \bigr)
\times
\]
\begin{equation}   \label{eqn:43f}
\mbox{$\ $} \hspace{3cm}
\times  \prod_{  \begin{array}{c}
{\scriptstyle W \in \rho}  \\
{\scriptstyle W \, \text{outer}}
\end{array} } 
\beta_{ |W|; \uChi } \bigl( \, (x_1, \ldots , x_n) \mid W \bigr).
\end{equation}
It was however noticed in Section 2 that for a fixed $\rho \in NC(n)$ 
one has:
\[
\sum_{   \begin{array}{c}
{\scriptstyle \pi \in NC(n), }  \\
{\scriptstyle \pi \gg \rho}
\end{array} }  (-1)^{|\rho|-|\pi|} 
= \left\{    \begin{array}{ll}
1 &  \text{ if } \rho \in \Int(n),\\
0 &  \text{ otherwise. }
\end{array}  \right.
\]
Therefore the summation in (\ref{eqn:43f}) reduces to just
$\sum_{\rho \in \Int(n)}  \ \prod_{W \in \rho}
\beta_{ |W| ; \uChi } 
\bigl( \, (x_1, \ldots , x_n) \mid W \bigr)$
(where we also took into account that all blocks of an interval 
partition are outer blocks).  The latter sum is indeed equal to
$\chi_n (x_1, \ldots , x_n)$, as required, by the definition 
of Boolean cumulants.
\end{proof}

$\ $

\begin{lemma} \label{lem:44}
Let $n \geq 2$ be an integer, let $V_o$ be a subset of 
$\{ 1, \ldots , n \}$ such that $V_o \ni 1,n$, and let 
$x_1, \ldots , x_n$ be in $\cV$.  One has:
\begin{equation}  \label{eqn:44a}
\sum_{ \begin{array}{c}
	{\scriptstyle \pi \in NC(n) \ such}  \\
	{\scriptstyle that \ V_o \in \pi}
	\end{array} } \ (-1)^{1 + | \pi |} 
\ \prod_{  \begin{array}{c}
           {\scriptstyle V \in \pi,}   \\
           {\scriptstyle V \neq V_o}
           \end{array}  }
\ \beta_{ |V|; \uPhi }  \bigl( \, (x_1, \ldots , x_n) \mid V \bigr)
\end{equation}
\[
= \sum_{ \begin{array}{c}
	{\scriptstyle \rho \in NC(n) \ such}  \\
	{\scriptstyle that \ V_o \in \rho}
	\end{array} } \ \Moeb_n ( \rho , 1_n)
\cdot  \prod_{  \begin{array}{c}
	{\scriptstyle V \in \rho}  \\
	{\scriptstyle V \neq V_o}
	\end{array} } 
\ \varphi_{ |V| } \bigl( \, (x_1, \ldots , x_n) \mid V \bigr).
\]   
\end{lemma}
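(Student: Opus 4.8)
The plan is to prove \eqref{eqn:44a} by re-expressing the moments $\varphi_{|V|}$ on its right-hand side through the Boolean cumulants of $\uPhi$, and then comparing coefficients with the left-hand side. Concretely, in the right-hand side of \eqref{eqn:44a} one replaces, for each block $V$ of $\rho$ with $V\ne V_o$, the factor $\varphi_{|V|}\bigl(\,(x_1,\dots,x_n)\mid V\,\bigr)$ by its expansion $\sum_{\sigma_V\in\Int(V)}\prod_{W\in\sigma_V}\beta_{|W|;\uPhi}\bigl(\,(x_1,\dots,x_n)\mid W\,\bigr)$ from \eqref{eqn:32a}, leaves the block $V_o$ untouched, and cross-multiplies. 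A choice of $\sigma_V\in\Int(V)$ for every $V\in\rho$ with $V\ne V_o$, glued together with $V_o$, yields a partition $\pi\in NC(n)$ with $V_o\in\pi$, refining $\rho$, and inducing an interval partition on every block of $\rho$ (the one induced on $V_o$ being $\{V_o\}$); that is, $\pi\sqsubseteq\rho$. Conversely every pair $(\rho,\pi)$ with $V_o\in\rho$, $V_o\in\pi$ and $\pi\sqsubseteq\rho$ arises this way. The point to notice is that, because $1,n\in V_o$, the condition $V_o\in\rho$ is automatic once $V_o\in\pi$ and $\pi\sqsubseteq\rho$: if $W$ is the block of $\rho$ containing $V_o$, then $V_o$ must be a block of the interval partition $\pi|_W$, hence an interval of $W$, and since $\min V_o=1$, $\max V_o=n$ this interval equals $W$. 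Reorganizing the resulting double sum, the right-hand side of \eqref{eqn:44a} becomes
\[
\sum_{\substack{\pi\in NC(n)\\ V_o\in\pi}}\;\Bigl(\;\sum_{\substack{\rho\in NC(n)\\ \pi\sqsubseteq\rho}}\Moeb_n(\rho,1_n)\;\Bigr)\;\prod_{\substack{V\in\pi\\ V\ne V_o}}\beta_{|V|;\uPhi}\bigl(\,(x_1,\dots,x_n)\mid V\,\bigr),
\]
so, comparing with the left-hand side of \eqref{eqn:44a}, it suffices to prove that $\sum_{\rho\in NC(n),\,\pi\sqsubseteq\rho}\Moeb_n(\rho,1_n)=(-1)^{1+|\pi|}$ for every $\pi\in NC(n)$ with $V_o\in\pi$.

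For this last identity I would pass to the Kreweras complement. Since $1,n\in V_o\in\pi$ we have $\pi\ll 1_n$, and every $\rho$ with $\pi\sqsubseteq\rho$ likewise has $V_o$ as a block, hence $\rho\ll 1_n$; so Lemma \ref{lem:210} gives that $\rho\mapsto K_n(\rho)$ is an anti-isomorphism from $(\{\rho:\pi\sqsubseteq\rho\},\sqsubseteq)$ onto $(\{\sigma: \sigma\ll K_n(\pi)\},\ll)$. Using $\Moeb_n(\rho,1_n)=\Moeb_n(0_n,K_n(\rho))$ from \eqref{eqn:24a}, and $|K_n(\pi)|=n+1-|\pi|$ (so that $(-1)^{1+|\pi|}=(-1)^{n+|K_n(\pi)|}$), the identity to be proved takes the form $\sum_{\sigma\ll\tau}\Moeb_n(0_n,\sigma)=(-1)^{n+|\tau|}$ with $\tau:=K_n(\pi)$; in fact I would prove this for arbitrary $\tau\in NC(n)$. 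Both sides are multiplicative over the blocks of $\tau$: by Remark \ref{rem:27} the set $\{\sigma:\sigma\ll\tau\}$ is the interval $[\tau_0,\tau]$, which decomposes block by block as $\prod_{W\in\tau}[\tau_0|_W,1_W]$, while $\Moeb_n(0_n,\sigma)=\prod_{W\in\tau}\Moeb_{|W|}(0_{|W|},\sigma|_W)$ and $(-1)^{n+|\tau|}=\prod_{W\in\tau}(-1)^{|W|+1}$. Hence everything reduces to the case $\tau=1_k$, i.e.\ to showing $\sum_{\sigma\in NC(k),\,\sigma\ll 1_k}\Moeb_k(0_k,\sigma)=(-1)^{k+1}$ for every $k\ge 1$.

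To finish, decompose a $\sigma\ll 1_k$ according to its unique outer block $B=\{1\}\cup S\cup\{k\}$ (where $S\subseteq\{2,\dots,k-1\}$) together with an arbitrary non-crossing partition of each gap of $B$. Since $\Moeb_k(0_k,\sigma)=\Moeb_{|B|}(0_{|B|},1_{|B|})\cdot\prod_{\text{gaps }g}\Moeb_{|g|}(0_{|g|},\sigma|_g)$ and $\sum_{\sigma'\in NC(m)}\Moeb_m(0_m,\sigma')$ equals $1$ for $m\le 1$ and $0$ for $m\ge 2$, only those $S$ for which every gap of $B$ has at most one element contribute (equivalently, $\{2,\dots,k-1\}\setminus S$ contains no two consecutive integers), and for those the gap product equals $1$. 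Counting such $S$ by $j=|S|$ (there are $\binom{j+1}{k-2-j}$ of them) and using $\Moeb_{j+2}(0_{j+2},1_{j+2})=(-1)^{j+1}\Cat_{j+1}$ gives $\sum_{\sigma\ll 1_k}\Moeb_k(0_k,\sigma)=\sum_{j}\binom{j+1}{k-2-j}(-1)^{j+1}\Cat_{j+1}$, and the generating-function evaluation $\sum_{i\ge 0}\Cat_i\bigl(-y(1+y)\bigr)^i=\frac{1-\sqrt{1+4y(1+y)}}{-2y(1+y)}=\frac{1}{1+y}$ shows that this sum equals $(-1)^{k+1}$, as wanted. (Equivalently, the $\tau=1_k$ case says that the numerical sequence whose free cumulants are $\kappa_m=(-1)^{m-1}\Cat_{m-1}$ has moment sequence $1,1,0,0,\dots$ and hence Boolean cumulants $(-1)^{k+1}$.) The only genuinely computational point is this Catalan-number identity; everything preceding it is a reorganization of sums together with the $\sqsubseteq/\ll$ Kreweras duality of Lemma \ref{lem:210} and the block-multiplicativity of the M\"obius function of $NC(n)$.
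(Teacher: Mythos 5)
Your proof is correct, and it is essentially the paper's argument run in reverse. The paper starts from the left-hand side of \eqref{eqn:44a} and expands each Boolean cumulant $\beta_{|V|;\uPhi}$ into moments via \eqref{eqn:32b}; after exchanging the order of summation, the coefficient of a fixed product of moments indexed by $\rho$ is $(-1)^{|\rho|-1}\,\bigl|\{\pi : \rho\sqsubseteq\pi\}\bigr|$, and Lemma \ref{lem:210} together with the cardinality formula \eqref{eqn:27a} identifies this with $\Moeb_n(\rho,1_n)$ --- an unsigned count already recorded in Section 2. You instead start from the right-hand side, expand each moment into Boolean cumulants via \eqref{eqn:32a}, and are led to the signed sum $\sum_{\rho\,:\,\pi\sqsubseteq\rho}\Moeb_n(\rho,1_n)$, which you must then evaluate to $(-1)^{1+|\pi|}$; your Kreweras-complement reduction, the block-multiplicativity step, and the Catalan generating-function computation all check out (the case $k=1$ of your final identity falls outside the $B=\{1\}\cup S\cup\{k\}$ parametrization, since the binomial sum then vanishes, but the identity is trivially true there and only that direct value is needed). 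Both arguments hinge on the same two observations --- that $1,n\in V_o$ forces the constraint ``$V_o$ is a block'' to propagate automatically between $\pi$ and $\rho$ across the relation $\sqsubseteq$, and the $\sqsubseteq/\ll$ duality of Lemma \ref{lem:210} --- so the approaches are structurally parallel mirror images; the paper's direction is the more economical one because the combinatorial input it needs is an unsigned cardinality that is already available, whereas yours requires the extra signed evaluation $\sum_{\sigma\ll 1_k}\Moeb_k(0_k,\sigma)=(-1)^{k+1}$ (equivalently, computing the Boolean cumulants of the sequence whose free cumulants are $(-1)^{m-1}\Cat_{m-1}$), which is the genuinely new computational content of your version.
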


$\ $

\begin{proof}
Fix for the moment a partition $\pi \in NC(n)$ such that $V_o \in \pi$.
For every block $V \neq V_o$ of $\pi$, we use the definition of 
Boolean cumulants in order to express 
$\beta_{ |V|; \uPhi } ( \, (x_1, \ldots , x_n) \mid V )$ as a sum 
indexed by $\Int ( |V| )$, and then we cross-multiply the resulting 
sums.  The reader should have no difficulty to verify that, upon 
doing this cross-multiplication, one arrives to the formula
\begin{equation}   \label{eqn:44b}
\prod_{  \begin{array}{c}
{\scriptstyle V \in \pi,}  \\
{\scriptstyle V \neq V_o} \end{array} } \, \beta_{ |V| }  
\bigl( \, (x_1, \ldots , x_n) \mid V \bigr) 
= \sum_{ \begin{array}{c}
{\scriptstyle \rho \sqsubseteq \pi \ such}  \\
{\scriptstyle that \ V_o \in \rho}
\end{array} } \, (-1)^{|\rho | - | \pi |} 
\ \prod_{ \begin{array}{c}
{\scriptstyle V \in \rho,}   \\  
{\scriptstyle V \neq V_o}
\end{array} }
\, \varphi \bigl( \, (x_1, \ldots , x_n) \mid V \bigr),
\end{equation}
where $\sqsubseteq$ is the partial order relation on $NC(n)$ 
reviewed in Section 2.2.

We now let $\pi$ run in the index set shown on the left-hand side 
of (\ref{eqn:44a}).  By summing in Equation (\ref{eqn:44b}) over this 
range for $\pi$, we find that
$$\sum_{ \begin{array}{c}
		{\scriptstyle \pi \in NC(n) \ such}  \\
		{\scriptstyle that \ V_o \in \pi}
		\end{array} } \ (-1)^{ | \pi | - 1} \ \prod_{V \in \pi, \, V \neq V_o}
	\ \beta_{ |V| }  \bigl( \, (a_1, \ldots , a_n) \mid V \bigr)$$
$$= \sum_{ \begin{array}{c}
		{\scriptstyle \pi \in NC(n) \ such}  \\
		{\scriptstyle that \ V_o \in \pi}
		\end{array} } \sum_{ \begin{array}{c}
		{\scriptstyle \rho \sqsubseteq \pi \ such}  \\
		{\scriptstyle that \ V_o \in \rho}
		\end{array} } \ (-1)^{|\rho | - 1} \ \prod_{V \in \rho, \, V \neq V_o}
	\ \varphi \bigl( \, (a_1, \ldots , a_n) \mid V \bigr).$$
By exchanging sums indexed by $\rho$ and $\pi$, we can continue the above with
\[
= \sum_{ \begin{array}{c}
{\scriptstyle \rho \in NC(n) \ such}  \\
{\scriptstyle that \ V_o \in \rho}
\end{array} }  \ (-1)^{|\rho | - 1} |\{\pi \in NC(n) : \rho \sqsubseteq \pi \}|
\cdot  \prod_{  \begin{array}{c}
{\scriptstyle V \in \rho}  \\
{\scriptstyle V \neq V_o}
\end{array} } 
\ \varphi_{ |V| } \bigl( \, (a_1, \ldots , a_n) \mid V \bigr).
\]

In order to conclude the proof, we are left to verify that for every 
$\rho \in NC(n)$ such that $V_o \in \rho$, one has
\[
(-1)^{|\rho | - 1} 
\left| \{\pi \in NC(n) : \rho \sqsubseteq \pi \}\right| 
= \Moeb_n ( \rho , 1_n).
\]
To this end, we invoke Lemma \ref{lem:210}, which gives us that 
\[
\left| \{\pi \in NC(n) : \rho \sqsubseteq \pi \}\right| 
= \left| \{\pi\in NC(n): \pi\ll \rho\}\right| .
\]
It was noticed in Section 2 that the latter cardinality is equal to 
the product of Catalan numbers $\prod_{V\in\rho} C_{|V|-1}$, and upon 
multiplying this with $(-1)^{|\rho | - 1}$, one arrives indeed to the 
required value $\Moeb_n ( \rho , 1_n)$.
\end{proof}

$\ $

\noindent
{\bf Proof of Proposition \ref{prop:41}.} 
The case when $n=1$ is clear (both sides of Equation 
(\ref{eqn:41a}) are equal to $\chi_1 (x_1)$).  For $n \geq 2$,
we start from the expression for 
$\kappa_n^{(c)} (x_1, \ldots , x_n)$
found in Lemma \ref{lem:43}, where we write the sum on the 
right-hand side of Equation (\ref{eqn:43a}) as a double sum
of the form
\begin{equation}   \label{eqn:41b}
\sum_{ \begin{array}{c}  
      {\scriptstyle V_o \subseteq \{ 1, \ldots , n \}}  \\
      {\scriptstyle with \ V_o \ni 1,n}
      \end{array} }
\ \sum_{ \begin{array}{c}  
      {\scriptstyle \pi \in NC(n) \ such } \\
      {\scriptstyle that \ V_o \in \pi}
      \end{array} } 
\ \mbox{[term indexed by $V_o$ and $\pi$].}
\end{equation}
Then for every $V_o \subseteq \{ 1, \ldots , n \}$ such that 
$V_o \ni 1,n$ we replace the second sum in (\ref{eqn:41b}) by 
using Lemma \ref{lem:44}.  When doing this replacement we 
arrive to a double sum of the form
\[
\sum_{ \begin{array}{c}  
      {\scriptstyle V_o \subseteq \{ 1, \ldots , n \}}  \\
      {\scriptstyle with \ V_o \ni 1,n}
      \end{array} }
\ \sum_{ \begin{array}{c}  
      {\scriptstyle \rho \in NC(n) \ such } \\
      {\scriptstyle that \ V_o \in \rho}
      \end{array} } 
\ \mbox{[term indexed by $V_o$ and $\rho$],}
\]
and converting the latter double sum into one sum indexed 
by $\{ \rho \in NC(n) : \rho \ll 1_n \}$ leads to the 
required formula (\ref{eqn:41b}).
\hfill  $\square$

$\ $

\section{Non-crossing partitions with symmetries,  \\
and a variation on c-free cumulant functionals}  

The ``typical'' combinatorial approach to any brand of cumulants 
goes by identifying a coherent sequence of {\em lattices of partitions},
which are used as index sets for the various summation formulas 
describing the cumulants in question.  This applies in particular to 
the free cumulants and to the Boolean cumulants reviewed in Sections 
3.1 and 3.2, where the relevant lattices of partitions are 
$\{ NC(n) : n \in \bN \}$ and respectively
$\{ \Int (n) : n \in \bN \}$.  The approach with partitions is 
good because it offers streamlined ideas on how to proceed -- for 
instance one typically uses (following a method initiated by Rota 
in the 1960's) a notion of ``multiplicative functions'' on the relevant 
sequence of lattices, then one looks for a formula for 
``cumulants with products as entries'', where an essential ingredient 
in the formula is the use of lattice operations. 

In this section we review the approach to cumulants via partitions 
in the two extended frameworks we are considering: c-free cumulants 
and infinitesimal free cumulants, and we point out a relevant fact 
which seems to have been overlooked in the c-free case.

\subsection{Lattices of partitions for infinitesimal free cumulants.}

$\ $

\noindent
We first go over the (better understood) case of infinitesimal 
free cumulants.  In order to approach these cumulants via the study 
of multiplicative functions on a sequence of lattices, one uses an 
``analogue of type B'' for the $NC(n)$'s; this is a family of lattices 
denoted as $\NCB (n)$ which were introduced in \cite{Re1997}.
In order to depict a partition $\sigma \in \NCB (n)$, one starts by 
marking on a circle $2n$ points, labelled as $1, \ldots , n$ and 
$-1 , \ldots , -n$ (as exemplified, for $n=5$, in Figure 2 below).  
Then $\sigma$ must achieve a non-crossing partition of these $2n$ 
points, with the additional symmetry requirement that if $U$ is 
a block of $\sigma$ then $-U := \{ -i : i \in U \}$ is a block 
of $\sigma$ as well.

Note that a block $U$ of a $\sigma \in NC^{(B)} (n)$ either is 
such that $U = -U$, in which case we say that $U$ is a 
{\em zero-block} of $\sigma$, or is such that 
$U \cap (-U) = \emptyset$.  The non-crossing condition forces 
every $\sigma \in \NCB (n)$ to have at most one zero-block; 
moreover, there exists a natural bijection between
$\{ \sigma \in \NCB (n) : \sigma \mbox{ has a zero-block} \}$
and
$\{ \sigma \in \NCB (n) : \sigma \mbox{ has no zero-block} \}$,
which is implemented by a suitable version of the Kreweras 
complementation map.

In the papers \cite{BiGoNi2003} and then \cite{FeNi2010} 
it was pointed out that, when used on the lattices 
$\NCB (n)$, the general machinery of Rota 
leads to the infinitesimal free cumulant functionals
$\kappa_n ' : \cA^n \to \bC$ associated to an I-ncps
$( \cA , \varphi , \varphi ' )$.  The same considerations apply, 
without any changes, to the framework of 
$( \cV, \uPhi , \uPhi ' )$ used in Section 3.3.  Since the 
families of equations (\ref{eqn:34a}) and (\ref{eqn:34b}) shown 
in Section 3.3 did not, at the face of it, call on the lattices
$\NCB (n)$, it is of relevance to mention here that 
the double sum ``$\sum_{\pi \in NC(n)} \sum_{V \in \pi}$'' 
appearing in these families of equations comes from processing
a single sum indexed by $\{ \sigma \in \NCB (n) : \sigma 
\mbox{ has a zero-block} \}$.  For instance, Equation 
(\ref{eqn:34a}) corresponds to a formula which looks like this:
\begin{equation}   \label{eqn:5a}
\varphi_n ' (x_1, \ldots , x_n )
= \sum_{ \begin{array}{c}
{\scriptstyle \sigma \in \NCB (n)}   \\
{\scriptstyle with \ zero-block \ Z}
\end{array} } 
\kappa_{|Z|/2} ' \bigl( (x_1, \ldots , x_n) \mid \Abs (Z) \bigr)
\times
\end{equation}
\[
\mbox{$\ $} \hspace{3cm} \times 
\prod_{ \begin{array}{c}
{\scriptstyle pairs \ U, -U \in \sigma} \\
{\scriptstyle such \ that \ U \neq (-U) }
\end{array}  }  \, 
\kappa_{|U|; \uPhi} \bigl( (x_1, \ldots , x_n) \mid \Abs (U) \bigr),
\]
where $\Abs : \{ 1, \ldots , n \} \cup \{ -1, \ldots , -n \} 
\to \{ 1, \ldots , n \}$ is the absolute value map sending 
$\pm i$ to $i$ for $1 \leq i \leq n$.  (For the details of how one 
passes back-and-forth between Equations (\ref{eqn:34a}) and 
(\ref{eqn:5a}) we refer the reader to Section 6 of \cite{FeNi2010}.)

$\ $

\begin{center}
		\begin{tikzpicture}[scale=2.5,cap=round,>=latex]
		
		\draw[thick] (0cm,0cm) circle(1cm);
		
		\foreach \x in {18,54,...,342} {
			\filldraw[black] (\x:1cm) circle(0.4pt);
		}
		\draw[black] (162:1cm)--(90:1cm);
		\draw[black] (90:1cm)--(342:1cm);
		\draw[black] (342:1cm)--(270:1cm);	
		\draw[black] (270:1cm)--(162:1cm);
		
		\draw[black] (18:1cm) to[out=180,in=270] (54:1cm);	
		\draw[black] (234:1cm) to[out=90,in=0] (198:1cm);	
		
		
		\foreach \x/\xtext in {
			198/-5,
			234/-4,
			270/-3,
			306/-2,
			342/-1,
			18/5,
			54/4,
			90/3,
			126/2,
			162/1
		}
		\draw (\x:1.25cm) node[] {$\xtext$};
		
		\end{tikzpicture}
		
{\bf Figure 2.} 
{\em The partition 
$\{ \, \{ 1,3,-1,-3 \}, \{ 2 \}, \{ -2 \}, \{ 4,5 \} , 
\{-4,-5\} \, \} \in \NCB (5)$.}
\end{center}

$\ $

\subsection{Lattices of partitions for conditionally free cumulants.}

$\ $

\noindent
Now let us look at the $c$-free framework.  In order to approach 
cumulants via the study of multiplicative functions on a 
sequence of lattices, one uses here some lattices $\NCBopp (n)$ 
which were identified in \cite{Ca2000}.  The drawing of a 
$\sigma \in \NCBopp (n)$ starts by marking on a circle $2n$ 
points, labelled as $1, \ldots , n$ and $-n , \ldots , -1$ (as 
exemplified, for $n=5$, in Figure 3 below).  Then $\sigma$ must 
achieve a non-crossing partition of these $2n$ 
points, with the additional symmetry requirement that if $U$ is 
a block of $\sigma$ then $-U := \{ -i : i \in U \}$ is a block 
of $\sigma$ as well.  Note that the description of how we obtain 
our $\sigma$ is strikingly similar to the description which 
immediately preceded Figure 2 -- the only difference (and the 
reason for using the name ``$\NCBopp (n)$'') is that the points 
with negative labels $-1, \ldots , -n$ now appear 
{\em in reverse order} as we travel around the circle.

Similar to how things went for partitions in $NC^{(B)}(n)$, a block 
$U$ of a partition $\sigma$ in $NC^{(B-opp)} (n)$ either is such 
that $U = -U$, in which case we say that $U$ is a {\em zero-block} 
of $\sigma$, or is such that $U \cap (-U) = \emptyset$.  But
unlike how things went for $NC^{(B)} (n)$, a partition 
$\sigma \in NC^{(B-opp)} (n)$ may have multiple zero-blocks (for 
instance the second partition shown in Figure 3 has two such blocks).

In the paper \cite{Ca2000} it was shown that, when used
on the lattices $\NCBopp (n)$, the general machinery of Rota 
leads to the identification of a family of cumulants 
$( \kappa^{(cc)}_n : \cA^n \to {\mathbb C} )_{n=1}^{\infty}$ 
associated to a C-ncps $( \cA , \varphi, \chi )$, which can be 
used for describing $c$-freeness for subalgebras of $\cA$.  
A puzzling detail appearing at this point is that the 
functionals $\kappa^{(cc)}_n$ are not the same as the
$\kappa^{(c)}_n$'s coming from \cite{BoLeSp1996}, which were 
reviewed in Section 3.4.  The main goal of the present section 
is to resolve this puzzle by pointing out a neat direct 
connection between the $\kappa_n^{(c)}$'s and 
$\kappa_n^{(cc)}$'s, in Proposition \ref{prop:54} below.
We start towards that by recording a few elementary observations
about the structure of the partitions in $NC^{(B-opp)} (n)$.

\vspace{10pt}

\begin{center}
	\begin{tikzpicture}[scale=2.5,cap=round,>=latex]
	
	\draw[thick] (0cm,0cm) circle(1cm);

	\foreach \x in {18,54,...,342} {
		\filldraw[black] (\x:1cm) circle(0.4pt);
	}
	\draw[black] (162:1cm)--(90:1cm);
	\draw[black] (90:1cm)--(270:1cm);
	\draw[black] (270:1cm)--(198:1cm);	
	\draw[black] (198:1cm)to[out=0,in=0](162:1cm);
	
	\draw[black] (18:1cm) to[out=180,in=270] (54:1cm);	
	\draw[black] (306:1cm) to[out=90,in=180] (342:1cm);	
	
	
	\foreach \x/\xtext in {
		198/-1,
		234/-2,
		270/-3,
		306/-4,
		342/-5,
		18/5,
		54/4,
		90/3,
		126/2,
		162/1
	}
	\draw (\x:1.25cm) node[] {$\xtext$};
	
	\begin{scope}[shift={(3,0)}]
		\draw[thick] (0cm,0cm) circle(1cm);

		\foreach \x in {18,54,...,342} {
			\filldraw[black] (\x:1cm) circle(0.4pt);
		}
		\draw[black] (162:1cm)--(90:1cm);
		\draw[black] (90:1cm)--(270:1cm);
		\draw[black] (270:1cm)--(198:1cm);	
		\draw[black] (198:1cm) to[out=0,in=0] (162:1cm);

	\draw[black] (18:1cm) to[out=180,in=270] (54:1cm);	
	\draw[black] (18:1cm) to[out=180,in=180] (342:1cm);	
	\draw[black] (306:1cm) to[out=90,in=180] (342:1cm);		
	\draw[black] (306:1cm)--(54:1cm);	
		
		\foreach \x/\xtext in {
			198/-1,
			234/-2,
			270/-3,
			306/-4,
			342/-5,
			18/5,
			54/4,
			90/3,
			126/2,
			162/1
		}
		\draw (\x:1.25cm) node[] {$\xtext$};
		
	\end{scope}
	
	\end{tikzpicture}	
	
{\bf Figure 3.} 
{\em The partitions 
$\{ \, \{ 1,3,-1,-3 \}, \{ 2 \}, \{ -2 \}, \{ 4,5 \} , 
\{-4,-5\} \, \}$ (left)}

{\em and 
$\{ \, \{ 1,3,-1,-3 \}, \{ 2 \}, \{ -2 \}, 
\{ 4,5, -4,-5\} \, \}$ (right) in $\NCBopp (5)$.} 
\end{center}

\vspace{10pt}

\begin{remark}  \label{rem:51}
Let $\sigma$ be a partition in $NC^{(B-opp)} (n)$.

\vspace{6pt}

(1) If $U$ is a block of $\sigma$ such that 
$U \cap \{ 1, \ldots , n \} \neq \emptyset \neq
U \cap \{ -1, \ldots , -n \}$,
then $U$ must be a zero-block.  Indeed, let 
$i,j \in \{ 1, \ldots , n \}$ be such that $i, -j \in U$.  If 
$i=j$, then $i \in U \cap (-U)$, which forces $U = -U$.  If 
$i \neq j$, then we look at the four points $i,j,-i,-j$ drawn 
around the circle and we observe that if $U$ and $-U$ would 
be distinct blocks of $\sigma$, then there would be crossings 
between them (e.g. if $i<j$, then the four points we're 
looking at come in the order $i,j,-j,-i$, with $i,-j \in U$ 
and $j,-i \in -U$).  So this case, too, leads to the conclusion 
that $U = -U$.

\vspace{6pt}

(2) The contrapositive of (1) is that every non-zero-block of 
$\sigma$ either is contained in $\{ 1, \ldots , n \}$ or is 
contained in $\{ -1, \ldots , -n \}$.  Thus the non-zero blocks 
of $\sigma$ come in pairs $U, -U$, with one of $U, -U$ 
contained in $\{ 1, \ldots , n \}$ and the other contained 
in $\{ -1, \ldots , -n \}$.
\end{remark}

\begin{defrem}  \label{def:52}
Let $\cV$ be a vector space over $\bC$, and consider (analogous 
to the considerations of Section 3.4) a triple $( \cV , \uPhi, \uChi )$ 
where $\uPhi = ( \varphi_n )_{n=1}^{\infty}$ and 
$\uChi = ( \chi_n )_{n=1}^{\infty}$ are families of multilinear
functionals on $\cV$.  We will use the name of 
{\em alternative c-free cumulants} associated to 
$( \cV , \uPhi, \uChi )$ for the family of multilinear functionals 
$\uKappa^{(cc)} = ( \kappa^{(cc)}_n )_{n=1}^{\infty}$ 
which is uniquely determined by the requirement that
\begin{equation}   \label{eqn:52a}
\chi_n (x_1 , \ldots , x_n) = 
\end{equation}
\[
\sum_{\sigma \in NC^{(B-opp)} (n)} 
\prod_{  \begin{array}{c}
        {\scriptstyle U \in \sigma,}  \\
        {\scriptstyle U \subseteq \{ 1, \ldots , n \} }
\end{array} } \kappa_{ |U| ; \uPhi } 
( \, (x_1, \ldots , x_n) \mid U )
\prod_{  \begin{array}{c}
        {\scriptstyle Z \in \sigma,}  \\
        {\scriptstyle Z = -Z}
\end{array} } \kappa^{(cc)}_{ |Z|/2 } 
( \, (x_1, \ldots , x_n) \mid Z \cap \{ 1, \ldots , n \} ),
\]
holding for all $n \in \bN$ and $x_1, \ldots , x_n \in \cV$,
and where $( \kappa_{n ; \uPhi} )_{n=1}^{\infty}$ 
are the free cumulants associated to $( \cV , \uPhi )$.

The fact that $\uKappa^{(cc)}$ can indeed be defined 
by using Equation (\ref{eqn:52a}) is easily seen, in a similar
way to how it was seen that the c-free cumulants $\uKappa^{(c)}$
associated to $( \cV , \uPhi, \uChi )$ are correctly defined by 
Equation (\ref{eqn:33a}) of Section 3.4.  In the case at hand, 
one isolates on the right-hand side of (\ref{eqn:52a}) the term 
$\kappa_n^{(cc)} (x_1, \ldots , x_n)$ which is indexed by the 
partition $\sigma \in NC^{(B-opp)}$ with only one block, 
and then one proceeds by induction.

Before stating the proposition which relates $\uKappa^{(cc)}$ 
to $\uKappa^{(c)}$ we record another remark about $\NCBopp$, 
concerning the natural ``absolute value map''
$\Abs : \NCBopp (n) \to NC(n)$.
\end{defrem}

\begin{remnotation}    \label{rem:53}
Let $n$ be a positive integer, and for every subset 
$U \subseteq \{ 1, \ldots ,n \} \cup \{ -1, \ldots , -n \}$,
let us agree to denote $\Abs (U) := \{ |i| : i \in U \}$.

\vspace{6pt}

(1) It is immediate that if $\sigma$ is a partition in 
$\NCBopp (n)$ and if $U_1, U_2$ are two blocks of $\sigma$,
then either $\Abs (U_1) = \Abs (U_2)$ or
$\Abs (U_1) \cap \Abs (U_2) = \emptyset$. This implies that
the set of sets 
\[
\Abs ( \sigma ) := \{ \Abs (U) : U \in \sigma \}
\]
is a partition of $\{ 1, \ldots , n \}$.  It is, moreover, 
easily seen that $\Abs ( \sigma )$ must belong to $NC(n)$.
Indeed, for any two distinct blocks $V_1, V_2$ of 
$\Abs ( \sigma )$ it is possible to pick two distinct blocks 
$U_1, U_2$ of $\sigma$ such that $V_1 \subseteq U_1$ and 
$V_2 \subseteq U_2$; so a crossing between $V_1$ and $V_2$
would entail a crossing between $U_1$ and $U_2$, which is 
not possible. 

\vspace{6pt}

(2) Let $\sigma$ be in $\NCBopp (n)$, and consider the 
partition $\pi = \Abs ( \sigma ) \in NC(n)$.  We note that
if $Z$ is a zero-block of $\sigma$, then $W := \Abs (Z)$ 
has to be an outer block of $\pi$.  Indeed, if $W$ was to 
be nested inside some other block $V$ of $\pi$, then upon
writing $V = \Abs (U)$ for some $U \in \sigma$ one would 
immediately find crossings between $Z$ and $U$, which is 
not possible.

\vspace{6pt}

(3) Parts (1) and (2) of this remark provide us with a map
\begin{equation}   \label{eqn:53a}
\sigma \mapsto 
\Bigl( \,  \Abs ( \sigma ), 
\,  \{ \Abs (Z) : Z \in \sigma, \ Z = -Z \} \, \Bigr)
\end{equation}
going from $\NCBopp (n)$ to the set
$\{ ( \pi , \cS )  : \pi \in NC(n), \, \cS \subseteq 
\Out ( \pi ) \}$, where we used the notation 
$\Out ( \pi )$ for the set of all outer blocks of a 
partition $\pi \in NC(n)$.  We leave it as an exercise to the 
reader to check that the map (\ref{eqn:53a}) is a bijection, 
with inverse described as follows: given $\pi \in NC(n)$ and 
a set $\cS$ (possibly empty) of outer blocks of $\pi$, we put 
\[
\sigma := \{ W \cup (-W) :  W \in \cS \}
\cup \{ V : V \in \pi \setminus \cS \}
\cup \{ -V : V \in \pi \setminus \cS \}
\]
(where $\pi \setminus \cS$ denotes the set of blocks of $\pi$
not taken in $\cS$).
\end{remnotation}

\begin{proposition}    \label{prop:54}
Let $\cV$ be a vector space over $\bC$, and consider 
a triple $( \cV , \uPhi, \uChi )$ where 
$\uPhi = ( \varphi_n )_{n=1}^{\infty}$ and 
$\uChi = ( \chi_n )_{n=1}^{\infty}$ are in $\fMcV$.
Let $\uKappa^{(c)} = ( \kappa^{(c)}_n )_{n=1}^{\infty}$ 
be the c-free cumulants associated to $( \cV , \uPhi, \uChi )$
as in Section 3.4, 
and let $\uKappa^{(cc)} = ( \kappa^{(cc)}_n )_{n=1}^{\infty}$ 
be the alternative c-free cumulants considered in 
Definition \ref{def:52}.  One has
\begin{equation}   \label{eqn:54a}
\kappa^{(cc)}_n  =  \kappa^{(c)}_n - \kappa_{n; \uPhi},
\ \ n \in \bN,
\end{equation}	
where $( \kappa_{n ; \uPhi} )_{n=1}^{\infty}$ 
are the free cumulants associated to $( \cV , \uPhi )$.
\end{proposition}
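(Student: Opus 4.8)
The plan is to transport the defining relation~(\ref{eqn:52a}) for $\uKappa^{(cc)}$ across the bijection recorded in Remark~\ref{rem:53}(3), and then to match the outcome against the defining relation~(\ref{eqn:33a}) for $\uKappa^{(c)}$.

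Concretely, fix $n \in \bN$ and $x_1, \ldots , x_n \in \cV$, and use the bijection $\sigma \mapsto ( \pi , \cS )$ with $\pi = \Abs ( \sigma ) \in NC(n)$ and $\cS = \{ \Abs (Z) : Z \in \sigma , \ Z = -Z \} \subseteq \Out ( \pi )$ to rewrite the sum over $\NCBopp (n)$ in~(\ref{eqn:52a}) as a double sum over $\pi \in NC(n)$ and $\cS \subseteq \Out ( \pi )$. The key observation is that, under this bijection, the blocks $U$ of $\sigma$ with $U \subseteq \{ 1, \ldots , n \}$ are precisely the blocks $V \in \pi \setminus \cS$ --- and indeed $U = V$, so $|U| = |V|$ and $(x_1, \ldots , x_n) \mid U = (x_1, \ldots , x_n) \mid V$ --- while the zero-blocks $Z$ of $\sigma$ are precisely the sets $W \cup (-W)$ for $W \in \cS$, for which $|Z|/2 = |W|$ and $Z \cap \{ 1, \ldots , n \} = W$. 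Hence~(\ref{eqn:52a}) becomes
\[
\chi_n (x_1, \ldots , x_n) = \sum_{\pi \in NC(n)} \ \sum_{\cS \subseteq \Out ( \pi )} \ \prod_{V \in \pi \setminus \cS} \kappa_{|V| ; \uPhi} \bigl( (x_1, \ldots , x_n) \mid V \bigr) \ \prod_{W \in \cS} \kappa^{(cc)}_{|W|} \bigl( (x_1, \ldots , x_n) \mid W \bigr) .
\]

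Next, for a fixed $\pi$ I would carry out the inner sum over $\cS$. Since $\cS \subseteq \Out ( \pi )$, every inner block of $\pi$ belongs to $\pi \setminus \cS$ for all $\cS$ and so contributes a factor $\kappa_{|V| ; \uPhi}$ independent of $\cS$, while the remaining factors are distributed over the outer blocks of $\pi$; the sum over $\cS$ then collapses, by the distributive law, to a product over $W \in \Out ( \pi )$ of the two-term factors $\kappa_{|W| ; \uPhi} \bigl( (x_1, \ldots , x_n) \mid W \bigr) + \kappa^{(cc)}_{|W|} \bigl( (x_1, \ldots , x_n) \mid W \bigr)$. This yields
\[
\chi_n (x_1, \ldots , x_n) = \sum_{\pi \in NC(n)} \ \prod_{V \in \pi , \ V \ \mathrm{inner}} \kappa_{|V| ; \uPhi} \bigl( (x_1, \ldots , x_n) \mid V \bigr) \ \prod_{W \in \pi , \ W \ \mathrm{outer}} \Bigl( \kappa_{|W| ; \uPhi} + \kappa^{(cc)}_{|W|} \Bigr) \bigl( (x_1, \ldots , x_n) \mid W \bigr) .
\]
Comparing this with~(\ref{eqn:33a}), and using that the system~(\ref{eqn:33a}) has a unique solution (one isolates the term indexed by $1_n$ and inducts on $n$, exactly as recalled in Section 3.4), I conclude that the family $( \kappa_{n ; \uPhi} + \kappa^{(cc)}_n )_{n=1}^{\infty}$ coincides with $\uKappa^{(c)}$; this is precisely the asserted identity~(\ref{eqn:54a}).

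The step I expect to require the most care is the first one. One must check, from the explicit inverse of the bijection described in Remark~\ref{rem:53}(3), that the block contents are transported correctly: that a non-zero block $U \subseteq \{ 1, \ldots , n \}$ of $\sigma$ equals --- not merely has the same absolute value as --- the corresponding block $V$ of $\pi$, and that the entries attached to a zero-block $Z$ in~(\ref{eqn:52a}), namely those indexed by $Z \cap \{ 1, \ldots , n \}$, coincide with the entries indexed by the block $W \in \cS$. One should also note that of each non-zero pair $U, -U$ of $\sigma$ only the member contained in $\{ 1, \ldots , n \}$ enters the first product of~(\ref{eqn:52a}), so no double counting occurs. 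Once this bookkeeping is settled, the remaining manipulations are routine.
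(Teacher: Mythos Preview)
Your proposal is correct and follows essentially the same approach as the paper's own proof: transport (\ref{eqn:52a}) across the bijection of Remark~\ref{rem:53}(3), collapse the inner sum over $\cS \subseteq \Out(\pi)$ via the distributive law into a product of two-term factors $\kappa_{|W|;\uPhi} + \kappa^{(cc)}_{|W|}$ over outer blocks, and then invoke uniqueness of the solution to (\ref{eqn:33a}). The paper carries out exactly these three steps, with the only cosmetic difference that it names the sum $\lambda_n := \kappa^{(cc)}_n + \kappa_{n;\uPhi}$ before matching against (\ref{eqn:33a}).
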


\begin{proof}
The bijection observed in Remark \ref{rem:53}(3) can be used as a 
change of variable in the Equation (\ref{eqn:52a}) defining 
$\uKappa^{(cc)}$, which then takes the form:
\begin{equation}   \label{eqn:54b}
\chi_n (x_1 , \ldots , x_n) = 
\end{equation}
\[
\sum_{\pi \in NC(n)}
\sum_{\cS \subseteq \Out ( \pi )}
\prod_{V \in \pi \setminus \cS} 
\kappa_{ |V| ; \uPhi } ( \, (x_1, \ldots , x_n) \mid V )
\cdot \prod_{W \in \cS}
\kappa^{(cc)}_{ |W| } ( \, (x_1, \ldots , x_n) \mid W ),
\]
holding for all $n \in \bN$ and $x_1, \ldots , x_n \in \cV$.
It is immediate that if on the right-hand side of (\ref{eqn:54b}) 
we fix a $\pi \in NC(n)$ and we only perform the sum over 
$\cS \subseteq \Out ( \pi )$, what results is the product
\begin{equation}   \label{eqn:54c}
\prod_{  \begin{array}{c}
        {\scriptstyle V \in \pi}  \\
        {\scriptstyle V \, \text{inner}}
         \end{array} } 
\kappa_{ |V| ; \uPhi } ( (x_1, \ldots , x_n) \mid V )
\cdot \prod_{  \begin{array}{c}
              {\scriptstyle W \in \pi}  \\
              {\scriptstyle W \, \text{outer}}
               \end{array} } 
\bigl( 
\kappa_{ |W| ; \uPhi } ( (x_1, \ldots , x_n) \mid W )
+ \kappa^{(cc)}_{ |W| } ( (x_1, \ldots , x_n) \mid W ) 
\bigr).
\end{equation}
 
For every $n \in \bN$ let us put 
$\lambda_n := \kappa_n^{(cc)} + \kappa_{n; \uPhi}$,
and let $\uLambda := ( \lambda_n )_{n=1}^{\infty} \in \fMcV$.
Upon replacing (\ref{eqn:54c}) inside
the right-hand side of Equation (\ref{eqn:54b}), we find that 
\begin{equation}   \label{eqn:54d}
\chi_n (x_1 , \ldots , x_n) = 
\end{equation}
\[
\sum_{\pi \in NC(n)}
\prod_{  \begin{array}{c}
        {\scriptstyle V \in \pi}  \\
        {\scriptstyle V \, \text{inner}}
         \end{array} } 
\kappa_{ |V| ; \uPhi } ( (x_1, \ldots , x_n) \mid V )
\cdot \prod_{  \begin{array}{c}
              {\scriptstyle W \in \pi}  \\
              {\scriptstyle W \, \text{outer}}
               \end{array} } 
\lambda_{ |W| } ( \, (x_1, \ldots , x_n) \mid W ),
\]
holding for every $n \in \bN$ and $x_1, \ldots , x_n \in \cV$.  We have 
thus obtained that $\uLambda$ satisfies the family of equations 
(\ref{eqn:33a}) which were used to define $\uKappa^{(c)}$.  It follows 
that $\uLambda = \uKappa^{(c)}$, which concludes the proof. 
\end{proof}

\begin{remark}   \label{rem:55}
(1) Let $( \cA, \varphi , \chi )$ be a C-ncps, and let 
$\cA_1, \ldots , \cA_k$ be unital subalgebras of $\cA$.
Proposition \ref{prop:54} explains in a rather neat way why the
functionals $\kappa^{(cc)}_n$ can indeed be used in the description 
of c-free independence, in a similar way to how the $\kappa^{(c)}_n$ 
are used.  Indeed, in the presence of the background condition that 
$\cA_1, \ldots , \cA_k$ are freely independent with respect to $\varphi$
(which imposes the vanishing of the mixed free cumulants with respect 
to $\varphi$), Proposition \ref{prop:54} assures us that the vanishing 
of mixed cumulants $\kappa_n^{(cc)}$ is equivalent to the one of mixed 
cumulants $\kappa_n^{(c)}$.

\vspace{6pt}

(2) $\uKappa^{(cc)}$ is useful because it can be related to the 
lattice operations on $\NCBopp (n)$.  This allows, for instance, a
nicely streamlined treatment of the formula for cumulants with 
products as entries (as shown in Theorem 2.6 of \cite{Ca2000}).

\vspace{6pt}

(3) The formula (\ref{eqn:52a}) which was used for introducing 
$\uKappa^{(cc)}$ in Definition \ref{def:52} can be re-written in 
a way which only uses partitions with zero-blocks on the right-hand 
side.  Indeed, it is immediate that the sub-sum over 
$\{ \sigma \in \NCBopp (n) : \sigma$ has no zero-blocks$\}$ 
on the right-hand side of (\ref{eqn:52a}) simply gives 
$\varphi_n (x_1, \ldots , x_n)$.  With this observation,
Equation (\ref{eqn:52a}) can be put in the form
\begin{equation}   \label{eqn:55a}
( \chi_n -\varphi_n ) (x_1 , \ldots , x_n) = 
\sum_{ \begin{array}{c}
{\scriptstyle \sigma \in \NCBopp (n)}  \\
{\scriptstyle with \ zero-blocks}
\end{array} }
\prod_{  \begin{array}{c}
        {\scriptstyle Z \in \sigma,}  \\
        {\scriptstyle Z = -Z}
\end{array} } \kappa^{(cc)}_{ |Z|/2 } 
( \, (x_1, \ldots , x_n) \mid \Abs (Z)  )  \times
\end{equation}
\[
\mbox{$\ $} \hspace{3cm} \times
\prod_{  \begin{array}{c}
        {\scriptstyle pairs \, U, -U \in \sigma,}  \\
        {\scriptstyle such \ that \, U \neq -U}
\end{array} } \kappa_{ |U| ; \uPhi } 
( \, (x_1, \ldots , x_n) \mid \Abs (U) );
\]
this is strikingly similar to the formula (\ref{eqn:5a}) 
which was reviewed in Section 5.1 in the framework of 
infinitesimal free cumulants.

The apparent resemblance between Equations (\ref{eqn:5a}) 
and (\ref{eqn:55a}) is, in some sense, a manifestation of 
the resemblance between the two types of pictures shown in 
Figures 2 and 3, for partitions in $\NCB (n)$ and in $\NCBopp (n)$.  
We should point out here the intriguing additional detail that
the lattices $\NCB (n)$ and $\NCBopp (n)$ have the same cardinality 
-- they are both counted by the binomial coefficient $2n$-choose-$n$. 
Tantalizing as this may be, we are not aware of any result that 
would relate c-free independence to infinitesimal free independence 
via a direct connection between $\NCB (n)$ and $\NCBopp (n)$; an 
example of a rather natural bijection $\NCB (n) \to \NCBopp (n)$, 
based on systems of parentheses, was explained to one of us by 
Vic Reiner \cite{Re2009}, but that does not seem to convert well 
into considerations on non-commutative random variables.
\end{remark}

$\ $

\section{Proof of Theorem \ref{thm:17}}  

\begin{notation}   \label{def:61}
{\em (Framework of the section.)}

\noindent
Throughout this section we fix a vector space $\cV$ over $\bC$ 
and two families of multilinear functionals 
$\uPhi , \uChi \in \fMcV$, where
$\uPhi = ( \varphi_n )_{n=1}^{\infty}$ and
$\uChi = ( \chi_n )_{n=1}^{\infty}$, as in Notation \ref{def:15}.
We assume that $\uPhi$ is tracial, in the sense indicated in that
same notation.  We also fix a linear map 
$\Delta : \cV \to \cV \otimes \cV$, and 
we consider the transformation $\Deltastar : \fMcV \to \fMcV$ 
constructed from $\Delta$ in the way described in Notation 
\ref{def:16}.  We then put 
$\uPhi ' := \Deltastar \left( \uBeta_{\uChi} \right)$, where 
$\uBeta_{\uChi} = ( \beta_{n; \uChi} )_{n=1}^{\infty} \in \fMcV$
is the family of Boolean cumulants of $\uChi$.  Our goal for the 
section is to prove the equality claimed in Theorem \ref{thm:17}:
\[
\uKappa ' = \Deltastar ( \, \uKappa^{(c)} \, ),
\]
where $\uKappa ' = ( \kappa_n ')_{n=1}^{\infty} \in \fMcV$ are 
the infinitesimal free cumulants associated to 
$( \cV , \uPhi , \uPhi ')$, while
$\uKappa^{(c)} = ( \kappa_n^{(c)} )_{n=1}^{\infty}$ are the 
c-free cumulants associated to $( \cV , \uPhi , \uChi )$.
In view of how $\Deltastar$ is defined, proving the latter 
equality amounts to proving that for every $n \in \bN$ one has
\begin{equation}   \label{eqn:61a}
\kappa_n ' =  \sum_{m=1}^n
\kappa_{n+1}^{(c)} \circ \Gamma_{n+1}^m \circ \Delta_n^{(m)},
\end{equation}
with $\Gamma_{n+1} : \cV^{\otimes (n+1)} \to \cV^{\otimes (n+1)}$ 
and $\Delta_n^{(m)} : \cV^{\otimes n} \to \cV^{\otimes (n+1)}$ as
in (1) and (2) of Notation \ref{def:16}.  In Equation (\ref{eqn:61a}), 
$\kappa_n '$ and $\kappa_{n+1}^{(c)}$ are treated as linear 
functionals on $\cV^{\otimes n}$ and respectively $\cV^{\otimes (n+1)}$ 
(rather than multilinear functionals on $\cV^n$ and $\cV^{n+1}$).

In order to handle the left-hand side of Equation (\ref{eqn:61a}), it 
is convenient to introduce the following notation.
\end{notation}

\begin{notation} \label{def:62}
(1) For every $n \in \bN$ and $m \in \{ 1, \ldots , n \}$, we denote
\begin{equation}   \label{eqn:62a}
\gamma_n^{(m)} := \beta_{n+1 ; \uChi} \circ \Gamma_{n+1}^m 
\circ \Delta_n^{(m)}.
\end{equation}
We will view $\gamma_n^{(m)}$, as needed, either as a linear 
functional $\cV^{\otimes n} \to \bC$ or as a multilinear 
functional $\cV^n \to \bC$.

\vspace{6pt}

(2) For every $n \in \bN$, $m \in \{ 1, \ldots , n \}$ and 
$\pi \in NC(n)$ we will denote by $\gamma_{\pi}^{(m)}$ the 
multilinear functional $\cV^n \to \bC$ defined as follows:
\begin{equation}   \label{eqn:62b}
\gamma_{\pi}^{(m)} (x_1, \ldots , x_n) := 
\gamma_{|V_o|}^{(r)} ( ( x_1, \ldots , x_n) \mid V_o ) \cdot
\prod_{  \begin{array}{c}
         {\scriptstyle V \in \pi} \\
         {\scriptstyle V \neq V_o}
         \end{array}  } 
\varphi_{ |V| } ( ( x_1, \ldots , x_n) \mid V ), 
\end{equation}
where $V_o$ denotes the block of $\pi$ which contains the 
number $m$, and $r$ denotes the ``rank of $m$ inside $V_o$''
-- that is, upon writing $V_o = \{ i_1, \ldots , i_p \}$ 
with $i_1 < \cdots < i_p$, one has $m = i_r$.

\vspace{6pt}

\noindent
[Note that part (2) of this notation is an extension of part (1),
since $\gamma_n^{(m)}$ may be retrieved as $\gamma_{\pi}^{(m)}$
for $\pi = 1_n \in NC(n)$.]
\end{notation}

\begin{lemma}   \label{lem:63}
For every $n \in \bN$, one has
\begin{equation}   \label{eqn:63a}
\kappa_n ' = \sum_{\pi \in NC(n)} \sum_{m=1}^n
\Moeb_n ( \pi , 1_n) \, \gamma_{\pi}^{(m)}.
\end{equation}
\end{lemma}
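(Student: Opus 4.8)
The plan is to start from the explicit formula for infinitesimal free cumulants recorded in Equation (\ref{eqn:34b}) of Section 3.3, and rewrite its right-hand side using the hypothesis $\uPhi' = \Deltastar(\uBeta_{\uChi})$. By definition of $\Deltastar$ (Notation \ref{def:16}) and of $\gamma_n^{(m)}$ (Notation \ref{def:62}(1)), the functional $\varphi_n'$ appearing in (\ref{eqn:34b}) is precisely $\sum_{m=1}^{n} \gamma_n^{(m)}$, viewed as a multilinear functional on $\cV^n$. So the first step is to substitute, for each term indexed by a pair $(\pi, V_o)$ in (\ref{eqn:34b}), the expression $\varphi_{|V_o|}'\bigl((x_1,\ldots,x_n)\mid V_o\bigr) = \sum_{s=1}^{|V_o|} \gamma_{|V_o|}^{(s)}\bigl((x_1,\ldots,x_n)\mid V_o\bigr)$, where the inner sum runs over the positions $s$ inside the block $V_o$.

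The second step is a reindexing: a triple $(\pi, V_o, s)$ — where $V_o\in\pi$ and $s\in\{1,\ldots,|V_o|\}$ — corresponds bijectively to a pair $(\pi, m)$ with $m\in\{1,\ldots,n\}$, via the rule that $m$ is the $s$-th smallest element of the block $V_o$ of $\pi$ containing it. Under this correspondence, the block $V_o$ of $\pi$ is recovered as the block containing $m$, and $s$ is recovered as the rank $r$ of $m$ inside that block — which is exactly the data packaged into $\gamma_{\pi}^{(m)}$ in Notation \ref{def:62}(2). Hence the term indexed by $(\pi, V_o, s)$ in the expanded sum, namely
\[
\Moeb_n(\pi,1_n)\cdot \gamma_{|V_o|}^{(s)}\bigl((x_1,\ldots,x_n)\mid V_o\bigr)\cdot \prod_{\substack{V\in\pi\\ V\neq V_o}} \varphi_{|V|}\bigl((x_1,\ldots,x_n)\mid V\bigr),
\]
is literally $\Moeb_n(\pi,1_n)\,\gamma_{\pi}^{(m)}(x_1,\ldots,x_n)$. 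Summing over all $(\pi,m)$ then yields exactly the right-hand side of (\ref{eqn:63a}).

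This lemma is essentially bookkeeping rather than a deep step: the content is just unwinding the definition of $\Deltastar$ together with the known formula for $\kappa_n'$, and matching up indices. The only point that requires a moment's care — and which I would expect to be the ``main obstacle'', such as it is — is verifying that the $\gamma_n^{(m)}$ of Notation \ref{def:62}(1) does indeed reproduce the $m$-th summand of $\psi_n = \varphi_{n+1}\circ\widetilde{\Delta}_n$ when $\uPhi' = \Deltastar(\uBeta_{\uChi})$, i.e.\ checking that $\varphi_n' = \sum_{m=1}^n \beta_{n+1;\uChi}\circ\Gamma_{n+1}^m\circ\Delta_n^{(m)}$ as multilinear functionals. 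This follows immediately from Equations (\ref{eqn:16b}) and the definition $\widetilde{\Delta}_n = \sum_{m=1}^n \Gamma_{n+1}^m\circ\Delta_n^{(m)}$ in Notation \ref{def:16}(3), together with the identification of $\beta_{n+1;\uChi}$ as a linear functional on $\cV^{\otimes(n+1)}$. With that in hand, the substitution-and-reindex argument above completes the proof; I would note explicitly that no use of traciality of $\uPhi$ is needed for this particular lemma (it enters only later, when manipulating the right-hand side of (\ref{eqn:61a})).
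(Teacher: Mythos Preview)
Your proposal is correct and follows essentially the same approach as the paper's own proof: start from formula (\ref{eqn:34b}), substitute the expansion of $\varphi_{|V_o|}'$ coming from $\uPhi' = \Deltastar(\uBeta_{\uChi})$, and reindex the resulting triple sum $\sum_{\pi}\sum_{V_o\in\pi}\sum_{s=1}^{|V_o|}$ as $\sum_{\pi}\sum_{m=1}^n$ via the rank-in-block bijection, recognizing each term as $\Moeb_n(\pi,1_n)\,\gamma_{\pi}^{(m)}$. Your remark that traciality of $\uPhi$ is not used here is also correct and consistent with the paper, which only invokes traciality later in Lemma \ref{lem:67}.
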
 

\begin{proof}  We start from the formula (\ref{eqn:34b}) which defines 
$\kappa_n ' ( x_1, \ldots , x_n)$, and on the right-hand side of that 
formula we replace the quantity
$\varphi_{ | V_o | } ' \bigl( \, (x_1, \ldots , x_n) \mid V_o \bigr)$
by using the definition of $\varphi_{|V_o|} '$, which is
\[
\varphi_{|V_o|} ' = \beta_{|V_o|+1; \uChi} \circ \widetilde{\Delta}_{|V_o|}
= \sum_{j=1}^{|V_o|}
\beta_{|V_o|+1; \uChi} \circ \Gamma_{|V_o|+1; \uChi}^j 
\circ \Delta_{|V_o|}^{(j)}.
\]
It is convenient to re-write the latter sum in the equivalent form
\begin{equation}  \label{eqn:63b}
\sum_{m \in V_o}
\beta_{|V_o|+1; \uChi} \circ \Gamma_{|V_o|+1; \uChi}^{r(m)}
\circ \Delta_{|V_o|}^{(r(m))},
\end{equation} 
where $r(m)$ indicates the rank of $m$ within the block $V_o$ (as indicated
in Notation \ref{def:62}(2)).  Upon substituting (\ref{eqn:63b}) in
(\ref{eqn:34b}), one arrives to an equation of the form
\begin{equation}  \label{eqn:63c}
\kappa_n ' (x_1, \ldots , x_n) 
= \sum_{\pi \in NC(n)} \sum_{V_o \in \pi} \sum_{m \in V_o} 
\Moeb_n ( \pi , 1_n) \cdot \term ( \pi , V_o , m),
\end{equation}
and the reader should have no difficulty to verify that the quantity 
``$\term ( \pi , V_o , m)$'' appearing in (\ref{eqn:63c}) is nothing but 
the $\gamma_{\pi}^{(m)} (x_1, \ldots , x_n)$ from Notation \ref{def:62}(2).  
Finally, the double sum $\sum_{V_o \in \pi} \sum_{m \in V_o}$ in 
(\ref{eqn:63c}) can be re-written as a plain sum $\sum_{m=1}^n$, which leads 
to the formula (\ref{eqn:63a}) stated in the lemma.
\end{proof}

We next introduce a notation which captures the multilinear functionals
used for the description of c-free cumulants in Proposition \ref{prop:41}.

\begin{notation} \label{def:64}
For $n \in \bN$ and $\pi \in NC(n)$ such that $\pi \ll 1_n$ 
we define $\eta_{\pi} : \cV^n \to \bC$ by
\begin{align*}
\eta_{\pi} (x_1,\ldots, x_n)
= \beta_{|W_o| ; \uChi} 
\left( (x_1,\ldots,x_n) | W_o \right)
\cdot
\prod_{ \begin{array}{c} 
        {\scriptstyle V \in \pi,}   \\
        {\scriptstyle V \neq W_o}
        \end{array}  } 
\varphi_{|V|} \left((x_1,\ldots,x_n)|V\right) ,
\end{align*}
where $W_o$ is the block of $\pi$ which contains the numbers
$1$ and $n$.
\end{notation}

\begin{lemma}   \label{lem:65}
For every $n \in \bN$, the right-hand side of Equation (\ref{eqn:61a}) 
can be written as
\begin{equation}   \label{eqn:65a}
\sum_{ \begin{array}{c}
       {\scriptstyle \rho \in NC(n+1),}  \\
       {\scriptstyle \rho \ll 1_{n+1} }  
       \end{array}  }
\sum_{m=1}^n
\Moeb_n ( \rho , 1_{n+1} ) \ \eta_{\rho} \circ
\Gamma_{n+1}^{(m)} \circ \Delta_n^{(m)}.
\end{equation}
\end{lemma}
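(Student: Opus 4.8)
The plan is to derive this identity directly from the explicit formula for c-free cumulants in Proposition \ref{prop:41}, after translating that formula into the notation of Notation \ref{def:64}.

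First I would rewrite Proposition \ref{prop:41}, applied with $n+1$ in place of $n$, as an identity of multilinear functionals. For $\rho \in NC(n+1)$ with $\rho \ll 1_{n+1}$, Remark \ref{def:26}(2) tells us that $\rho$ has a unique outer block and that this block contains both $1$ and $n+1$; hence the block $V_o(\rho)$ appearing in Proposition \ref{prop:41} is exactly the block $W_o$ occurring in Notation \ref{def:64}, and the summand indexed by $\rho$ on the right-hand side of \eqref{eqn:41a} is precisely $\Moeb_{n+1}(\rho, 1_{n+1}) \, \eta_{\rho}$. Thus Proposition \ref{prop:41} yields
\[
\kappa_{n+1}^{(c)} = \sum_{ \rho \in NC(n+1), \, \rho \ll 1_{n+1} } \Moeb_{n+1}(\rho, 1_{n+1}) \, \eta_{\rho} ,
\]
an equality which we read, via the identifications of Notation \ref{def:16}(4), between linear functionals on $\cV^{\otimes(n+1)}$.

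Next I would substitute this into the right-hand side of \eqref{eqn:61a}. For each $m \in \{ 1, \ldots , n \}$ the map $\Gamma_{n+1}^{m} \circ \Delta_n^{(m)} : \cV^{\otimes n} \to \cV^{\otimes(n+1)}$ is linear, so precomposition with it commutes with finite linear combinations of functionals, giving
\[
\kappa_{n+1}^{(c)} \circ \Gamma_{n+1}^{m} \circ \Delta_n^{(m)} = \sum_{ \rho \in NC(n+1), \, \rho \ll 1_{n+1} } \Moeb_{n+1}(\rho, 1_{n+1}) \, \bigl( \eta_{\rho} \circ \Gamma_{n+1}^{m} \circ \Delta_n^{(m)} \bigr) .
\]
Summing over $m$ from $1$ to $n$ and interchanging the two finite summations produces exactly the expression \eqref{eqn:65a}, which is what was to be shown.

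No genuine difficulty arises here: all of the combinatorial content has already been carried out in Proposition \ref{prop:41}. The only points that call for a word of care are the identification of the outer block $V_o(\rho)$ with the block $W_o$ of Notation \ref{def:64}, and keeping consistent track of when each functional is being viewed as a linear map on a tensor power versus a multilinear map, so that distributing the composition over the sum over $\rho$ is legitimate.
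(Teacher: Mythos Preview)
Your proof is correct and follows exactly the same approach as the paper's own proof: both simply invoke Proposition \ref{prop:41} (applied at level $n+1$) to express $\kappa_{n+1}^{(c)}$ as the sum $\sum_{\rho \ll 1_{n+1}} \Moeb_{n+1}(\rho,1_{n+1})\,\eta_\rho$, and then substitute into the right-hand side of \eqref{eqn:61a}. Your write-up merely spells out a few routine points (the identification $V_o(\rho)=W_o$ and the linearity justifying the distribution of the composition) that the paper leaves implicit.
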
 

\begin{proof} Proposition \ref{prop:41} tells us that 
\[
\kappa_{n+1}^{(c)} = 
\sum_{ \begin{array}{c}
       {\scriptstyle \rho \in NC(n+1),}  \\
       {\scriptstyle \rho \ll 1_{n+1} }  
       \end{array}  }
\Moeb_n ( \rho , 1_{n+1} ) \ \eta_{\rho}.
\]
Substituting this on the right-hand side of Equation (\ref{eqn:61a})
leads to (\ref{eqn:65a}).
\end{proof}

Returning to the formula (\ref{eqn:61a}) that needs to 
be proved: we now have both its sides expressed as sums, 
in Lemmas \ref{lem:63} and \ref{lem:65}.  Our proof
of (\ref{eqn:61a}) will go by showing that the sums appearing 
in the two said lemmas can be identified term by term.  In order 
to do the identification of the indexing sets, we will use the 
bijection described as follows.

\begin{remnotation}   \label{rem:66}
Let $n \in \bN$ and $m \in \{ 1, \ldots , n \}$ be given.
We consider the natural bijections 
\begin{equation}  \label{eqn:66a}
\{ \rho \in NC(n+1) : \rho \ll 1_{n+1} \} \to 
\left\{ \widehat{\rho} \in NC(n+1) \begin{array}{lc}
\vline & \mbox{$m$ and $m+1$ } \\
\vline & \mbox{belong to the}  \\
\vline & \mbox{same block of $\widehat{\rho}$}
\end{array}  \right\} \to NC(n),
\end{equation}  
where:

\noindent
-- The first map in (\ref{eqn:66a}) does a forward cyclic 
translation by $m$; that is, this map sends $\rho$ to 
$\widehat{\rho} = \{ \tau_m (V) : V \in \rho \}$,
with $\tau_m (k) = m+k \mbox{ mod} (n+1)$, 
$1 \leq k \leq n+1$.

\noindent
-- The second map in (\ref{eqn:66a}) merges together the 
numbers $m$ and $m+1$ in the block of $\widehat{\rho}$ 
which contains them.

We will use the notation
\[
F_n^{(m)} : \{ \rho \in NC(n+1) : \rho \ll 1_{n+1} \} \to NC(n)
\]
for the bijection obtained by composing the two maps from
(\ref{eqn:66a}).

It is useful to observe that for every $\rho \in NC(n+1)$
such that $\rho \ll 1_{n+1}$, one has
\begin{equation}   \label{eqn:66b}
\Moeb_n \bigl( F_n^{(m)} ( \rho) ,1_n \bigr) =
\Moeb_{n+1} (\rho,1_{n+1}).
\end{equation}
Indeed, if we follow the arrows 
$\rho \mapsto \widehat{\rho} \mapsto \pi = F_n^{(m)} ( \rho )$ 
in (\ref{eqn:66a}), one first has that 
$\Moeb_{n+1} ( \rho,1_{n+1}) 
= \Moeb_{n+1} ( \widehat{\rho},1_{n+1})$,
because the cyclic translation by $m$ gives an automorphism of
$NC(n+1)$ which preserves the values of the M\"obius function.
Then one has the equality 
$\Moeb_{n+1} ( \widehat{\rho},1_{n+1}) = \Moeb_n (\pi,1_n)$, 
which is seen by writing the explicit formulas of 
$\Moeb_{n+1} ( \widehat{\rho},1_{n+1})$ and of 
$\Moeb_n (\pi,1_n)$ (as in Remark \ref{def:24}), and by observing 
that the block structure of $K_{n+1} (\, \widehat{\rho} \, )$ only 
differs from the one of $K_n (\pi)$ by a singleton-block at $m$.
\end{remnotation}

\begin{lemma}   \label{lem:67}
Let $n \in \bN$ and $m \in \{ 1, \ldots , n \}$ be given.  Let us 
also fix a partition $\rho \in NC(n+1)$ such that $\rho \ll 1_{n+1}$,
and let us denote $\pi := F_n^{(m)} ( \rho ) \in NC(n)$.  We then have
\begin{equation}   \label{eqn:67a}
\gamma_{\pi}^{(m)} = \eta_{\rho} \circ \Gamma_{n+1}^m \circ 
\Delta_n^{(m)}
\end{equation}
(equality of multilinear functionals on $\cV^n$).
\end{lemma}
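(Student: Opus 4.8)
The plan is to evaluate both multilinear functionals in \eqref{eqn:67a} on a fixed tuple $x_1,\ldots,x_n\in\cV$; since $\Delta$ is linear and all functionals involved are multilinear, it suffices to treat the case $\Delta x_m=x_m'\otimes x_m''$ a simple tensor, the general case following by bilinearity. The first step is to write out $\Gamma_{n+1}^m\circ\Delta_n^{(m)}$ explicitly. Exactly as in the illustration following Notation \ref{def:16}(3), it carries $x_1\otimes\cdots\otimes x_n$ to the $(n+1)$-fold tensor
\[
z_1\otimes\cdots\otimes z_{n+1}:=x_m''\otimes x_{m+1}\otimes\cdots\otimes x_n\otimes x_1\otimes\cdots\otimes x_{m-1}\otimes x_m' .
\]
Equivalently, if $w_1\otimes\cdots\otimes w_{n+1}:=\Delta_n^{(m)}(x_1\otimes\cdots\otimes x_n)$ — so $w_m=x_m'$, $w_{m+1}=x_m''$, and elsewhere $w$ is $(x_1,\ldots,x_n)$ with a gap opened at $m$ — then $z_j=w_{\tau_m(j)}$, where $\tau_m$ is the cyclic shift of Remark and Notation \ref{rem:66}; in particular $z_1=x_m''$ and $z_{n+1}=x_m'$ occupy positions $1$ and $n+1$.

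The second step is to expand $\eta_\rho(z_1,\ldots,z_{n+1})$ via Notation \ref{def:64}: it is the product of the $\beta_{\,\cdot\,;\uChi}$-factor attached to the unique outer block $W_o$ of $\rho$ (the one containing $1$ and $n+1$) with the $\varphi$-factors attached to the remaining blocks of $\rho$. I want to match this, factor by factor, with the defining product of $\gamma_\pi^{(m)}(x_1,\ldots,x_n)$ from Notation \ref{def:62}(2), using the bijection $F_n^{(m)}$. Tracing its two steps — cyclic translation by $m$ to $\widehat{\rho}$, then fusion of the positions $m$ and $m+1$ — produces a dictionary of blocks: $W_o$ corresponds to the block $V_o$ of $\pi$ containing $m$, with $|V_o|=|W_o|-1$ and with the rank $r$ of $m$ inside $V_o$ equal to the rank of $m$ inside the block $\tau_m(W_o)$ of $\widehat{\rho}$; and the remaining blocks of $\rho$ correspond bijectively — via $\tau_m$ followed by the relabeling that fuses $m$ with $m+1$ — to the blocks of $\pi$ other than $V_o$.

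The third step checks the two kinds of matched factors. For a block $V\neq W_o$ of $\rho$: reading $z$ along $V$ means reading $w$ along $\tau_m(V)$, but listed in an order that is a single cyclic rotation of the increasing order of $\tau_m(V)$ (the shift $\tau_m$ is monotone apart from one wrap-around). Since such a $\tau_m(V)$ meets neither $m$ nor $m+1$, the corresponding $w$-entries are precisely the $x$-entries indexed by the matched block $V'$ of $\pi$, up to that same cyclic rotation; traciality of $\uPhi$ then gives that $\varphi_{|V|}$ of the $z$-subtuple equals $\varphi_{|V'|}\bigl((x_1,\ldots,x_n)\mid V'\bigr)$. For $W_o$: since $1,n+1\in W_o$, reading $z$ along $W_o$ produces a tuple that already begins with $z_1=x_m''$ and ends with $z_{n+1}=x_m'$, the entries in between being the $x$-entries indexed by $V_o\setminus\{m\}$, listed in the rotated order ``the part of $V_o$ after $m$, then the part before $m$''. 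That is exactly the output of $\Gamma_{|V_o|+1}^{r}\circ\Delta_{|V_o|}^{(r)}$ applied to $(x_1,\ldots,x_n)\mid V_o$, so $\beta_{|W_o|;\uChi}$ of that tuple is by definition $\gamma_{|V_o|}^{(r)}\bigl((x_1,\ldots,x_n)\mid V_o\bigr)$. Multiplying the matched factors yields \eqref{eqn:67a}.

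The main obstacle is bookkeeping: three reindexings are superimposed — the gap/split at position $m$ created by $\Delta_n^{(m)}$, the cyclic shift by $m$ created by $\Gamma_{n+1}^m$, and the fusion of positions $m$ and $m+1$ built into $F_n^{(m)}$ — and the delicate point is to recognize that the non-outer blocks of $\rho$ are exactly those whose $\tau_m$-image may straddle the seam between positions $n+1$ and $1$, so that the traciality hypothesis on $\uPhi$ is used for them and only for them, whereas the outer block $W_o$ matches the $\beta_{\uChi}$-factor on the nose, the relevant rotation being already part of the definition of $\gamma_{|V_o|}^{(r)}$.
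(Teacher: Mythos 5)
Your proposal is correct and follows essentially the same route as the paper's proof: set up the block dictionary induced by $F_n^{(m)}$ (outer block $W_o\leftrightarrow V_o$, remaining blocks matched via $\tau_m$ plus the fusion relabelling), observe that the $\beta_{\uChi}$-factor matches on the nose because the rotation is built into $\gamma_{|V_o|}^{(r)}$, and absorb the cyclic rotations in the $\varphi$-factors using traciality of $\uPhi$. The only difference is presentational: you carry out in full generality the index bookkeeping that the paper chooses to display on a representative example before summarizing the general mechanism.
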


\begin{proof} 
The verification of Equation (\ref{eqn:67a}) is done by a mere unfolding
of the definitions of the functionals indicated on the two sides of
the equation.  As this unfolding only presents 
difficulties of notational 
nature, we believe it is more beneficial to the reader if we show it 
on a suitable concrete example which captures the relevant features of 
what is going on.  At the end of the proof we will elaborate on why 
the details of the verification are in fact general, rather than being 
specific to the example shown.

The concrete example we pick for illustration has 
\[
n=9, \, m=3, \, \rho = \bigl\{ \, \{ 1,4,9,10 \}, 
\, \{ 2,3 \}, \, \{ 5,6,8 \}, \, \{ 7 \} \, \bigr\} 
\in NC(10).
\]
Figure 4 below depicts the partition $\rho$, the partition
\[
\pi = F_9^{(3)} ( \rho ) = \bigl\{ \, \{ 1,7,8 \}, 
\, \{ 2,3,6 \}, \, \{ 4,5 \}, \, \{ 9 \} \, \bigr\} 
\in NC(9),
\]
and also the partition $\widehat{\rho} \in NC(10)$ which is used as 
an intermediate between $\rho$ and $\pi$ in Notation \ref{rem:66}.

$\ $

\begin{center}
  \setlength{\unitlength}{0.5cm}
(a)
  \begin{picture}(10,3)
  \thicklines
  \put(1,-1){\line(0,1){3}}
  \put(1,-1){\line(1,0){9}}
  \put(2,0){\line(0,1){2}}
  \put(2,0){\line(1,0){1}}
  \put(3,0){\line(0,1){2}}
  \put(4,-1){\line(0,1){3}}
  \put(5,0){\line(0,1){2}}
  \put(5,0){\line(1,0){3}}
  \put(6,0){\line(0,1){2}}
  \put(7,1){\line(0,1){1}}
  \put(8,0){\line(0,1){2}}
  \put(9,-1){\line(0,1){3}}
  \put(10,-1){\line(0,1){3}}
  \put(0.8,2.2){1}
  \put(1.8,2.2){2}
  \put(2.8,2.2){3}
  \put(3.8,2.2){4}
  \put(4.8,2.2){5}
  \put(5.8,2.2){6}
  \put(6.8,2.2){7}
  \put(7.8,2.2){8}
  \put(8.8,2.2){9}
  \put(9.8,2.2){10}
  \end{picture}
\hspace{0.8cm} (b)
  \begin{picture}(10,3)
  \thicklines
  \put(1,-1){\line(0,1){3}}
  \put(1,-1){\line(1,0){8}}
  \put(2,0){\line(0,1){2}}
  \put(2,0){\line(1,0){5}}
  \put(3,0){\line(0,1){2}}
  \put(4,0){\line(0,1){2}}
  \put(5,1){\line(0,1){1}}
  \put(5,1){\line(1,0){1}}
  \put(6,1){\line(0,1){1}}
  \put(7,0){\line(0,1){2}}
  \put(8,-1){\line(0,1){3}}
  \put(9,-1){\line(0,1){3}}
  \put(10,-1){\line(0,1){3}}
  \put(0.8,2.2){1}
  \put(1.8,2.2){2}
  \put(2.8,2.2){3}
  \put(3.8,2.2){4}
  \put(4.8,2.2){5}
  \put(5.8,2.2){6}
  \put(6.8,2.2){7}
  \put(7.8,2.2){8}
  \put(8.8,2.2){9}
  \put(9.8,2.2){10}
  \end{picture}               

\vspace{1cm}

(c)
  \begin{picture}(9,3)
  \thicklines
  \put(1,-1){\line(0,1){3}}
  \put(1,-1){\line(1,0){7}}
  \put(2,0){\line(0,1){2}}
  \put(2,0){\line(1,0){4}}
  \put(3,0){\line(0,1){2}}
  \put(4,1){\line(0,1){1}}
  \put(4,1){\line(1,0){1}}
  \put(5,1){\line(0,1){1}}
  \put(6,0){\line(0,1){2}}
  \put(7,-1){\line(0,1){3}}
  \put(8,-1){\line(0,1){3}}
  \put(9,-1){\line(0,1){3}}
  \put(0.8,2.2){1}
  \put(1.8,2.2){2}
  \put(2.8,2.2){3}
  \put(3.8,2.2){4}
  \put(4.8,2.2){5}
  \put(5.8,2.2){6}
  \put(6.8,2.2){7}
  \put(7.8,2.2){8}
  \put(8.8,2.2){9}
  \end{picture}

\vspace{1cm}

{\bf Figure 4.}  {\em (a) The partition
$\rho \ll 1_{10}$ used for illustration in this proof.}

{\em (b) The cyclic permutation 
$\widehat{\rho}$ of $\rho$ by $m=3$.  (c) The 
partition $\pi = F_9^{(3)} ( \rho ) \in NC(9)$.}
\end{center}

$\ $

Given a tuple $(x_1, \ldots , x_n) \in \cV^n$, the processing of
either side of Equation (\ref{eqn:67a}) will require the explicit
writing of $\Delta (x_m) \in \cV \otimes \cV$.  We will denote 
\[
\Delta (x_m) = \sum_{h=1}^s y_h \otimes z_h.
\]
In the concrete example used for illustration (with $n=9, m=3$), 
the block of $\pi$ which contains $m$ is $V_o = \{ 2,3,6 \}$, and 
the rank of $m$ in $V_o$ is $r=2$; hence 
the evaluation of the left-hand side of Equation (\ref{eqn:67a}) 
starts with
\begin{equation}   \label{eqn:67b}
\gamma_{\pi}^{(3)} (x_1, \ldots , x_9)
= \gamma_3^{(2)} (x_2, x_3, x_6) \cdot \varphi_3 (x_1, x_7, x_8)
\varphi_2 (x_4,x_5) \varphi_1 (x_9),
\end{equation}
where we then replace
\begin{align*}
\gamma_3^{(2)} (x_2, x_3, x_6) 
& = ( \beta_{4; \uChi} \circ \Gamma_4^2 \circ \Delta_3^{(2)} )
(x_2 \otimes x_3 \otimes x_6)
\mbox{ (cf. Notation \ref{def:62}(1))}                       \\
& = ( \beta_{4; \uChi} \circ \Gamma_4^2 )
(x_2 \otimes \Delta (x_3) \otimes x_6)                       \\
& = \sum_{h=1}^s \beta_{4; \uChi} \bigl( \, \Gamma_4^2 
(x_2 \otimes y_h \otimes z_h \otimes x_6) \bigr)             \\
& = \sum_{h=1}^s \beta_{4; \uChi} 
(z_h \otimes x_6 \otimes x_2 \otimes y_h)
= \sum_{h=1}^s \beta_{4; \uChi} (z_h , x_6 , x_2 , y_h)
\end{align*}
(with $\beta_{4; \uChi}$ interchangeably viewed as a linear 
functional on $\cV^{\otimes 4}$ or as a multilinear 
functional on $\cV^4$).  Hence the evaluation started in 
(\ref{eqn:67b}) comes to 
\begin{equation}   \label{eqn:67c}
\gamma_{\pi}^{(3)} (x_1, \ldots , x_9) =
\sum_{h=1}^s \beta_{4; \uChi} (z_h , x_6 , x_2 , y_h)
\cdot \varphi_3 (x_1, x_7, x_8)
\varphi_2 (x_4,x_5) \varphi_1 (x_9).
\end{equation}

Moving now to the corresponding evaluation on the right-hand 
side of Equation (\ref{eqn:67a}), we have
\[
( \eta_{\rho} \circ \Gamma_{10}^3 \circ \Delta_9^{(3)} )
(x_1, \ldots , x_9)
= ( \eta_{\rho} \circ \Gamma_{10}^3 )
(x_1 \otimes x_2 \otimes \Delta (x_3) \otimes x_4 \otimes 
\cdots \otimes x_9)
\]
\[
= \sum_{h=1}^s  \eta_{\rho} \bigl( \Gamma_{10}^3 
(x_1 \otimes x_2 \otimes y_h \otimes z_h \otimes x_4 \otimes 
\cdots \otimes x_9) \bigr)
\]
\[
= \sum_{h=1}^s  \eta_{\rho} (z_h \otimes x_4 \otimes
\cdots \otimes x_9 \otimes x_1 \otimes x_2 \otimes y_h )
= \sum_{h=1}^s  \eta_{\rho} 
(z_h , x_4 , \ldots , x_9, x_1, x_2 , y_h ).
\]
In the latter expression, we use the definition of 
$\eta_{\rho}$ from Notation \ref{def:64}; when doing 
so, it is instructive to re-draw the picture of $\rho$ 
from Figure 4, with the labels 
$1,2, \ldots, 10$ being replaced by labels 
``$z_h, x_4, \ldots , x_2, y_h$'':

\begin{center}
  \setlength{\unitlength}{0.5cm}
  \begin{picture}(10,3)
  \thicklines
  \put(1,-1){\line(0,1){3}}
  \put(1,-1){\line(1,0){9}}
  \put(2,0){\line(0,1){2}}
  \put(2,0){\line(1,0){1}}
  \put(3,0){\line(0,1){2}}
  \put(4,-1){\line(0,1){3}}
  \put(5,0){\line(0,1){2}}
  \put(5,0){\line(1,0){3}}
  \put(6,0){\line(0,1){2}}
  \put(7,1){\line(0,1){1}}
  \put(8,0){\line(0,1){2}}
  \put(9,-1){\line(0,1){3}}
  \put(10,-1){\line(0,1){3}}
  \put(0.8,2.2){$z_h$}
  \put(1.8,2.2){$x_4$}
  \put(2.8,2.2){$x_5$}
  \put(3.8,2.2){$x_6$}
  \put(4.8,2.2){$x_7$}
  \put(5.8,2.2){$x_8$}
  \put(6.8,2.2){$x_9$}
  \put(7.8,2.2){$x_1$}
  \put(8.8,2.2){$x_2$}
  \put(9.8,2.2){$y_h$}
  \end{picture}
\end{center}

\vspace{0.75cm}

\noindent
The result of the evaluation for the right-hand side of Equation 
(\ref{eqn:67a}) then comes to
\begin{equation}   \label{eqn:67d}
( \eta_{\rho} \circ \Gamma_{10}^3 \circ \Delta_9^{(3)} )
(x_1, \ldots , x_9)
\end{equation}
\[
= \sum_{h=1}^s  \beta_{4; \uChi} (z_h , x_6 , x_2 , y_h )
\cdot \varphi_2 (x_4,x_5) \varphi_3 (x_7, x_8, x_1)
\varphi_1 (x_9).
\]
The expression obtained on the right-hand 
side of Equation (\ref{eqn:67d}) coincides with the one on 
the right-hand side of Equation (\ref{eqn:67c}), 
modulo the detail that the arguments $x_1, x_7, x_8$ were 
cyclically rotated in (\ref{eqn:67d}) (which doesn't change,
however, the value of $\varphi_3 (x_1, x_7, x_8)$, due to 
the hypothesis that $\uPhi$ is tracial).  This 
completes the verification of Equation (\ref{eqn:67a}) in 
the concrete example picked for illustration.

\vspace{6pt} 

We conclude our argument with a discussion about what 
is the general structure of the expressions encountered in 
(\ref{eqn:67c}) and (\ref{eqn:67d}), and what were the 
relations between the block structure of $\pi$ and of $\rho$ 
which produced the equality between these expressions.  We hope
that, upon examination, the reader will agree that this 
discussion is not specific to the special example considered 
above for illustration, but can be made whenever we consider 
partitions $\rho \in NC (n+1)$ and 
$\pi = F_n^{(m)} ( \rho ) \in NC(n)$ 
as described in the statement of the lemma.

-- We first note that the processing of either side of 
Equation (\ref{eqn:67a}) leads to an expression which is a 
sum of products.  The number $s$ of terms in the sum is picked
from an explicit writing of $\Delta (x_m)$, and every term has 
one factor which is a Boolean cumulant of $\uChi$, multiplied by
several factors which are moments of $\uPhi$.

-- The factors which are Boolean cumulants of $\uChi$ are found
by looking at the block $V_o = \{ i_1 < \cdots < i_p \} $
of $\pi$ which contains the number $m$ and, on the other hand,
at the unique outer block $W_o$ of $\rho$.  The sets $V_o$ and 
$W_o$ are related: one obtains $V_o$ out of $W_o$ via a cyclic 
rotation by $m$ followed by merging of $m$ with $m+1$, as 
explained in Notation \ref{rem:66}.  Both the processing using 
$V_o$ in (\ref{eqn:67c}) and the one using $W_o$ in 
(\ref{eqn:67d}) involve the same Boolean cumulants, which 
are 
\footnote{ It is fortunate that for these factors we don't 
need to do any kind of permutation in the arguments of the Boolean 
cumulants that appear.  Indeed, Boolean cumulants are not 
invariant under cyclic permutations of variables -- so even 
if we required $\uChi$ to be tracial, this feature would not be 
passed onto $\uBeta_{\uChi}$.}
of
the form
\begin{center}
$\beta_{p+1 ; \uChi} 
(z_h, x_{i_{r+1}}, x_{i_p}, x_{i_1}, 
\ldots , x_{i_{p-1}}, y_h)$, with $1 \leq h \leq s$.
\end{center}

-- The factors which are moments of $\uPhi$ are found
by looking at blocks $V \neq V_o$ of $\pi$ (for (\ref{eqn:67c}))
and at blocks $W \neq W_o$ of $\rho$ (for (\ref{eqn:67d})).
One has a natural bijective correspondence between such $V$'s 
and $W$'s, where every $V$ is obtained from the corresponding 
$W$ by a cyclic permutation followed by suitable relabelling.
(For instance, in the example of Figure 4: the block $W = \{ 2,3 \}$
of $\rho$ is cyclically permuted to $\{ 5,6 \}$ and then relabelled
to become the block $V = \{ 4,5 \}$ of $\pi$.)  For a $V$ and 
$W$ that correspond to each other: the definitions of the functionals
$\gamma_{\pi}^{(m)}$ and $\eta_{\rho}$ are made in such a way that
the choice of components selected out of a tuple 
$(x_1, \ldots , x_n) \in \cV^n$ is the same when we look at $V$ 
in (\ref{eqn:67c}) and when we look at $W$ in (\ref{eqn:67d}), 
modulo a possible cyclic permutation of the arguments.  This possible
cyclic permutation of the arguments is, however, taken care by the 
the hypothesis that $\uPhi$ is tracial.
\end{proof}

\begin{ad-hoc}
{\bf Conclusion of the proof of Theorem \ref{thm:17}.}
In view of Lemmas \ref{lem:63} and \ref{lem:65}, we are 
left to show that, for every $n \in \bN$, one has 
\begin{equation}   \label{eqn:68a}
\sum_{\pi \in NC(n)} \sum_{m=1}^n
\Moeb_n ( \pi , 1_n) \, \gamma_{\pi}^{(m)} =
\sum_{ \begin{array}{c}
       {\scriptstyle \rho \in NC(n+1),}  \\
       {\scriptstyle \rho \ll 1_{n+1} }  
       \end{array}  }
\sum_{m=1}^n
\Moeb_{n+1} ( \rho , 1_{n+1} ) \, \eta_{\rho} \circ
\Gamma_{n+1}^{(m)} \circ \Delta_n^{(m)}.
\end{equation}
And indeed, for any fixed $n \in \bN$ and $m \in \{ 1, \ldots , n \}$
we see that
\[
\sum_{\pi \in NC(n)} 
\Moeb_n ( \pi , 1_n) \, \gamma_{\pi}^{(m)}
= \sum_{ \begin{array}{c}
       {\scriptstyle \rho \in NC(n+1),}  \\
       {\scriptstyle \rho \ll 1_{n+1} }  
       \end{array}  }
\Moeb_n ( F_n^{(m)} ( \rho ) , 1_n ) 
\ \gamma_{ F_n^{(m)} ( \rho ) }^{(m)} 
\]
(via the change of summation variable 
``$\pi = F_n^{(m)} ( \rho )$'', based on Remark \ref{rem:66}),
which can be continued with
\[
= \sum_{ \begin{array}{c}
       {\scriptstyle \rho \in NC(n+1),}  \\
       {\scriptstyle \rho \ll 1_{n+1} }  
       \end{array}  }
\Moeb_{n+1} ( \rho , 1_{n+1} ) \ \eta_{\rho} \circ
\Gamma_{n+1}^{(m)} \circ \Delta_n^{(m)}               
\]
(by Lemma \ref{lem:67} and the equality 
of M\"obius functions observed in (\ref{eqn:66b})).
Summing over $m \in \{ 1, \ldots , n \}$ in the latter equalities
leads to (\ref{eqn:68a}).
\hfill $\square$
\end{ad-hoc}

$\ $

\section{Proof of Theorem \ref{thm:14}}

In this section we re-connect with the framework of
Section 1.2 of the Introduction, and we explain how 
Theorem \ref{thm:14} is a consequence of Theorem \ref{thm:17}.

$\ $

\begin{notation}   \label{def:71}
We fix throughout the section a positive integer $k$.  We put 
$\cV := \bC^k$, we fix a basis $e_1, \ldots , e_k$ of $\cV$,
and we consider the linear map $\Delta : \cV \to \cV \otimes \cV$
determined by the requirement that
\begin{equation}   \label{eqn:71a}
\Delta (e_i) = e_i \otimes e_i, \ \ 1 \leq i \leq k.
\end{equation}
We then consider the space $\fMcV$ of families of multilinear 
functionals defined as in Notation \ref{def:15}, and the 
transformation $\Deltastar : \fMcV \to \fMcV$ constructed by 
starting from $\Delta$ in the way shown in Notation \ref{def:16}.
It is immediate that, in the special case considered here, 
$\Deltastar$ acts as follows: given 
$\uPhi = ( \varphi_n )_{n=1}^{\infty} \in \fMcV$, one has 
$\Deltastar ( \uPhi ) = \uPsi = ( \psi_n )_{n=1}^{\infty}$, 
with $\psi_n$ determined via the requirement that for every 
$i_1, \ldots i_n \in \{ 1, \ldots ,k \}$ one has
\begin{equation}   \label{eqn:71b}
\psi_n (e_{i_1}, \ldots , e_{i_n} )
= \sum_{m=1}^n \varphi_{n+1} ( e_{i_m}, \ldots , e_{i_n},
e_{i_1}, \ldots , e_{i_m}).
\end{equation} 
\end{notation}

\begin{remnotation}   \label{rem:72}
In the present section, the notations for families of 
cumulants from Section 3 get to be used in two distinct ways.
On the one hand, we can consider families of functionals 
$\uPhi, \uChi , \uPhi ' \in \fMcV$; in connection to such
families we can define the four brands of cumulants discussed
in Section 3, which will be denoted here with appropriate
indices such as
\begin{equation}   \label{eqn:72a}
\left\{  \begin{array}{l}
\uKappa_{\uPhi} 
= ( \, \kappa_{n; \uPhi} \, )_{n=1}^{\infty}, 
\mbox{ for the free cumulants of $\uPhi$; }        \\
\uBeta_{\uChi} 
= ( \, \beta_{n; \uChi} \, )_{n=1}^{\infty},
\mbox{ for the Boolean cumulants of $\uChi$; }      \\
                                                    \\
\uKappa^{(c)}_{ ( \uPhi , \uChi )} 
= ( \, \kappa^{(c)}_{n; ( \uPhi , \uChi )} \, )_{n=1}^{\infty}, 
\mbox{ for the c-free cumulants of $\uPhi$ and $\uChi$; }        \\
\uKappa_{ ( \uPhi , \uPhi ' )} ' 
= ( \, \kappa_{n; ( \uPhi , \uPhi ')} ' \, )_{n=1}^{\infty}, 
\mbox{ for the infinitesimal free cumulants of 
       $\uPhi$ and $\uPhi '$. }      
\end{array}  \right.
\end{equation}
On the other hand, we can consider linear functionals 
$\mu , \nu , \mu '$ on the algebra of non-commutative 
polynomials $\bC \langle X_1, \ldots , X_k \rangle$,
with $\mu (1) = \nu (1) = 1$ and $\mu ' (1) = 0$.
In connection to such functionals: since
$\bC \langle X_1, \ldots , X_k \rangle$ is a unital 
algebra (where a linear functional naturally induces 
multilinear functionals on all the powers
$\bC \langle X_1, \ldots , X_k \rangle^n$) we can 
also define families of cumulants, with notations
such as
\begin{equation}   \label{eqn:72b}
\left\{  \begin{array}{l}
\uKappa_{\mu} 
= ( \, \kappa_{n; \mu} \, )_{n=1}^{\infty}, 
\mbox{ for the free cumulants of $\mu$; }                   \\
\uBeta_{\nu} 
= ( \, \beta_{n; \nu} \, )_{n=1}^{\infty},
\mbox{ for the Boolean cumulants of $\nu$; }                \\
                                                            \\
\uKappa^{(c)}_{ ( \mu , \nu )} 
= ( \, \kappa^{(c)}_{n; ( \mu , \nu )} \, )_{n=1}^{\infty}, 
\mbox{ for the c-free cumulants of $\mu$ and $\nu$; }        \\
\uKappa_{ ( \mu , \mu ' )} ' 
= ( \, \kappa_{n; ( \mu , \mu ')} ' \, )_{n=1}^{\infty}, 
\mbox{ for the infinitesimal free cumulants of 
       $\mu$ and $\mu '$. }      
\end{array}  \right.
\end{equation}

One has an obvious connection between the families of cumulants
listed in (\ref{eqn:72a}) and in (\ref{eqn:72b}), as recorded in 
the next lemma.
\end{remnotation}

\begin{lemma}  \label{lem:73}
Let $\mu , \nu , \mu ' : 
\bC \langle X_1, \ldots , X_k \rangle \to \bC$
be linear functionals with $\mu (1) = \nu (1) = 1$ and 
$\mu ' (1) = 0$. Consider the families of functionals 
$\uPhi , \uChi$ and $\uPhi '$ in $\fMcV$ defined via the 
requirement that for every $n \in \bN$ and 
$i_1, \ldots , i_n \in \{ 1, \ldots , k \}$ we have
\begin{equation}  \label{eqn:73a}
\left\{   \begin{array}{ll}
\varphi_n (e_{i_1}, \ldots , e_{i_n})   & =  
        \mu(X_{i_1} \cdots X_{i_n}),             \\
                                        &        \\
\chi_n (e_{i_1}, \ldots , e_{i_n})      & =  
    \nu(X_{i_1} \cdots X_{i_n}), \mbox{ and}     \\
                                        &        \\
\varphi'_n (e_{i_1}, \ldots , e_{i_n})  & = 
       \mu'(X_{i_1} \cdots X_{i_n}).
\end{array}  \right.
\end{equation}
Then, for every $n \in \bN$ and 
$i_1, \ldots , i_n \in \{ 1, \ldots , k \}$,
we have the following equalities of cumulants:

\vspace{6pt}

(1) $\kappa_{n ; \mu } 
(X_{i_1}, \ldots , X_{i_n}) =
\kappa_{n ; \uPhi}
(e_{i_1}, \ldots , e_{i_n})$.

\vspace{6pt}

(2) $\beta_{n ; \nu } 
(X_{i_1}, \ldots , X_{i_n}) =
\beta_{n ; \uChi}
(e_{i_1}, \ldots , e_{i_n})$.

\vspace{6pt}

(3) $\kappa_{n ; ( \mu , \nu )}^{(c)} 
(X_{i_1}, \ldots , X_{i_n}) =
\kappa_{n ; ( \uPhi , \uChi )}^{(c)}
(e_{i_1}, \ldots , e_{i_n})$.

\vspace{6pt}

(4) $\kappa_{n ; ( \mu , \mu ') } ' 
(X_{i_1}, \ldots , X_{i_n}) =
\kappa_{n ; ( \uPhi , \uPhi ' )} '
(e_{i_1}, \ldots , e_{i_n})$.
\end{lemma}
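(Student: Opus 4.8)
The plan is to verify each of the four equalities by the same mechanism: both sides are obtained from the \emph{same} combinatorial summation formula, applied either in the algebra $\bC\langle X_1,\ldots,X_k\rangle$ or in the couple/triple structure $(\cV,\ldots)$ built from $\cV=\bC^k$. The bridge between the two worlds is the observation that, via the identification of $\bC\langle X_1,\ldots,X_k\rangle$ with the tensor algebra of $\cV=\bC^k$ sending $X_{i_1}\cdots X_{i_n}$ to $e_{i_1}\otimes\cdots\otimes e_{i_n}$, the hypothesis \eqref{eqn:73a} says precisely that the multilinear functional attached to $\mu$ by the ``motivating example'' recipe $\varphi_n^{\mu}(y_1,\ldots,y_n):=\mu(y_1\cdots y_n)$ agrees, on basis tuples, with the given $\varphi_n$; and since both are multilinear, they agree everywhere. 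Thus $\uPhi$ is \emph{exactly} the family of multilinear functionals that the algebra $\bC\langle X_1,\ldots,X_k\rangle$ induces from $\mu$, and similarly for $\uChi\leftrightarrow\nu$, $\uPhi'\leftrightarrow\mu'$.

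Granting that, the argument is essentially a tautology. First I would prove (1): the free cumulants $\kappa_{n;\mu}$ are, by definition, the unique family satisfying the moment-cumulant relation \eqref{eqn:31a} with $\varphi_n$ replaced by $\mu(X_{i_1}\cdots X_{i_n})$; but \eqref{eqn:73a} rewrites this exactly as the defining relation \eqref{eqn:31a} for $\uKappa_{\uPhi}$, so by uniqueness the two families coincide on all basis tuples, and hence (by multilinearity) everywhere. Part (2) is identical with \eqref{eqn:32a} in place of \eqref{eqn:31a}. For (3), the c-free cumulants $\kappa^{(c)}_{n;(\mu,\nu)}$ are defined by \eqref{eqn:33a}; the right-hand side of \eqref{eqn:33a} involves only the functionals $\chi_n$ (now $\nu(X_{i_1}\cdots X_{i_n})$), the free cumulants $\kappa_{n;\uPhi}$, and the $\kappa^{(c)}_n$ themselves — and part (1) has already matched the $\kappa_{n;\uPhi}$ term across the two settings — so again the same recursion determines both families and forces agreement. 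Part (4) is the same, using \eqref{eqn:34a} together with parts (1) (to match the $\kappa_{n;\uPhi}$ appearing there); the only functional entering anew on the right side is $\varphi_n'=\mu'(X_{i_1}\cdots X_{i_n})$, which \eqref{eqn:73a} identifies with the $\uPhi'$-side.

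The ``main obstacle,'' such as it is, is purely bookkeeping: one must be careful that the notational conventions in \eqref{eqn:31b}, \eqref{eqn:32b}, \eqref{eqn:34b}, \eqref{eqn:33a} for restricting a tuple to a block (the operation $(x_1,\ldots,x_n)\mid V$ of \eqref{eqn:3a}) commute with the identification $X_{i_j}\leftrightarrow e_{i_j}$ — which they manifestly do, since restricting to a block simply selects a subtuple and both sides select the same subtuple. A second, equally trivial point is that all four cumulant families are honestly determined by a triangular recursion indexed by $n$ (with the top term $\kappa_n(\ldots)$ isolated on the right, as the paper notes for \eqref{eqn:31a} and \eqref{eqn:33a}), so ``same defining relation $\Rightarrow$ same family'' is legitimate. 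In short, I would phrase the proof as: for each of the four brands, write the defining moment–cumulant (or cumulant–cumulant) relation, substitute \eqref{eqn:73a}, observe it becomes verbatim the defining relation for the corresponding $\cV$-side family, and invoke uniqueness; induction on $n$ handles (1)$\Rightarrow$(3) and (1)$\Rightarrow$(4) where free cumulants of $\uPhi$ appear as building blocks. I expect no genuine difficulty, only the need to state the identification of $\bC\langle X_1,\ldots,X_k\rangle$ with the tensor algebra of $\bC^k$ cleanly once, and then let it do all the work.
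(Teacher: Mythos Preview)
Your proposal is correct and follows exactly the same approach as the paper, which dispatches the lemma in one sentence: ``This is because, in each of the equations listed in the conclusion of the lemma, the same moment-cumulant formulas are used on both sides of the equation.'' Your write-up simply unpacks that sentence carefully (matching each defining relation \eqref{eqn:31a}, \eqref{eqn:32a}, \eqref{eqn:33a}, \eqref{eqn:34a} across the identification \eqref{eqn:73a} and invoking uniqueness), so there is no divergence in method.
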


\begin{proof}  This is because, in each of the 
equations listed in the conclusion of the lemma,
the same moment-cumulant formulas are used on 
both sides of the equation.
\end{proof}

\begin{lemma}  \label{lem:74}
Consider the framework and notations 
of Lemma \ref{lem:73}.  Suppose we 
have $\mu ' = \Psi_k ( \nu )$, in the 
sense of Definition \ref{def:11}.
Then it follows that 
$\uPhi ' = \Deltastar ( \uBeta_{\uChi} )$, with 
$\Deltastar$ as reviewed in Notation \ref{def:71}.
\end{lemma}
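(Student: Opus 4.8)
The plan is to unwind both sides of the claimed equality $\uPhi ' = \Deltastar ( \uBeta_{\uChi} )$ directly, using the explicit description of $\Deltastar$ recorded in Notation \ref{def:71} together with the dictionary between the two incarnations of Boolean cumulants supplied by Lemma \ref{lem:73}(2). Write $\uBeta_{\uChi} = ( \beta_{n; \uChi} )_{n=1}^{\infty}$ and set $\uPsi := \Deltastar ( \uBeta_{\uChi} ) = ( \psi_n )_{n=1}^{\infty}$. Applying formula (\ref{eqn:71b}) with $\uPhi$ replaced by $\uBeta_{\uChi}$ (equivalently, with $\varphi_{n+1}$ replaced by $\beta_{n+1; \uChi}$), one obtains, for every $n \in \bN$ and $i_1, \ldots, i_n \in \{ 1, \ldots, k \}$,
\[
\psi_n ( e_{i_1}, \ldots , e_{i_n} ) = \sum_{m=1}^n \beta_{n+1; \uChi} ( e_{i_m}, \ldots , e_{i_n}, e_{i_1}, \ldots , e_{i_m} ).
\]

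Next I would rewrite each summand on the right by invoking Lemma \ref{lem:73}(2). For a fixed $m$ the tuple of indices occurring inside the $m$-th term is $( i_m, i_{m+1}, \ldots , i_n, i_1, \ldots , i_m )$, an $(n+1)$-tuple of elements of $\{ 1, \ldots, k \}$; since Lemma \ref{lem:73}(2) is valid for arbitrary length and arbitrary index tuples, it gives
\[
\beta_{n+1; \uChi} ( e_{i_m}, \ldots , e_{i_n}, e_{i_1}, \ldots , e_{i_m} ) = \beta_{n+1; \nu} ( X_{i_m}, \ldots , X_{i_n}, X_{i_1}, \ldots , X_{i_m} ).
\]
Substituting this into the preceding display yields
\[
\psi_n ( e_{i_1}, \ldots , e_{i_n} ) = \sum_{m=1}^n \beta_{n+1; \nu} ( X_{i_m}, \ldots , X_{i_n}, X_{i_1}, \ldots , X_{i_m} ).
\]

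At this point I would recognize the right-hand side as precisely the defining expression (\ref{eqn:11a}) for $\bigl( \Psi_k ( \nu ) \bigr) ( X_{i_1} \cdots X_{i_n} )$ from Definition \ref{def:11}. Since by hypothesis $\mu ' = \Psi_k ( \nu )$, this equals $\mu ' ( X_{i_1} \cdots X_{i_n} )$, which by the defining relation (\ref{eqn:73a}) for $\uPhi '$ in Lemma \ref{lem:73} is exactly $\varphi_n ' ( e_{i_1}, \ldots , e_{i_n} )$. As the simple tensors $e_{i_1} \otimes \cdots \otimes e_{i_n}$ span $\cV^{\otimes n}$ and $\psi_n, \varphi_n '$ are both multilinear, we conclude that $\psi_n = \varphi_n '$ for every $n \in \bN$, i.e. $\Deltastar ( \uBeta_{\uChi} ) = \uPhi '$, as claimed.

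There is no genuine obstacle here: the argument is a bookkeeping exercise consisting of lining up three formulas — the explicit form of $\Deltastar$ from Notation \ref{def:71}, the Boolean-cumulant dictionary of Lemma \ref{lem:73}(2), and the definition of $\Psi_k$ in Definition \ref{def:11}. The one point deserving a moment's care is that Lemma \ref{lem:73}(2) must be applied to the cyclically rearranged $(n+1)$-tuple of indices $( i_m, \ldots , i_n, i_1, \ldots , i_m )$ rather than to the original tuple $( i_1, \ldots , i_n )$; this is harmless, since that lemma holds verbatim for every $n$ and every choice of indices.
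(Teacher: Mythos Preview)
Your proof is correct and follows essentially the same approach as the paper's own argument: define $\uPsi := \Deltastar(\uBeta_{\uChi})$, use Equation~(\ref{eqn:71b}) to expand $\psi_n(e_{i_1},\ldots,e_{i_n})$, convert each term via Lemma~\ref{lem:73}(2), recognize the result as $\mu'(X_{i_1}\cdots X_{i_n})$ by Definition~\ref{def:11}, and finish with~(\ref{eqn:73a}) and multilinearity. The chain of equalities is identical to the paper's, and your closing remark about applying Lemma~\ref{lem:73}(2) to the cyclically rearranged tuple is a helpful clarification.
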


\begin{proof}
Let $\Deltastar ( \uBeta_{\uChi} )
:= \uPsi = ( \psi_n )_{n=1}^{\infty}$.
For every $n \in \bN$ and 
$i_1, \ldots , i_n \in \{ 1, \ldots , k \}$
we write
\[
\psi_n (e_{i_1}, \ldots , e_{i_n} )
= \sum_{m=1}^n  \beta_{n+1 ; \uChi}
( e_{i_m}, \ldots , e_{i_n}, e_{i_1}, \ldots , e_{i_m} )
\mbox{ (by Equation (\ref{eqn:71b}))}
\]
\[ 
= \sum_{m=1}^n  \beta_{n+1 ; \nu}
( X_{i_m}, \ldots , X_{i_n}, X_{i_1}, \ldots , X_{i_m} )
\mbox{ (by Lemma \ref{lem:73}(2))}
\]
\[
= \mu ' (X_{i_1} \cdots X_{i_n} )
\mbox{ (by Equation (\ref{eqn:11a}) in 
        Definition \ref{def:11}) }
\]
\[
= \varphi_n ' (e_{i_1}, \ldots , e_{i_n})
\mbox{ (by the definition of $\varphi_n '$ in Eqn.
(\ref{eqn:73a})). }
\]
By multilinearity, it follows that 
$\psi_n = \varphi_n '$ for all $n \in \bN$. Hence
$\uPhi ' = \uPsi = \Deltastar ( \uBeta_{\uChi} )$.
\end{proof}

\begin{ad-hoc}
{\bf Proof of Theorem \ref{thm:14}.}
Recall that we have the following data:
$k$ is a positive integer, $\mu, \nu$ are in 
$\Dalg (k)$, and we let 
$\mu' := \Psi_k ( \nu ) \in \Dalg ' (k)$. 
We assume $\mu $ to be tracial.  We have to prove that 
the infinitesimal free cumulants of $( \mu , \mu' )$
are related to the $c$-free cumulants of $( \mu , \nu )$
by the formula 
\[
\kappa_{n; ( \mu , \mu' )} ' ( X_{i_1}, \ldots , X_{i_n} ) 
= \sum_{m=1}^n
\kappa^{(c)}_{n+1 ; ( \mu , \nu )}
( X_{i_m}, \ldots , X_{i_n}, X_{i_1}, \ldots , X_{i_m} ),
\] 
holding for every $n \in \bN$ and 
$i_1, \ldots , i_n \in \{ 1, \ldots , k \}$.

Consider the families of functionals 
$\uPhi, \uChi, \uPhi ' \in \fMcV$ defined as in 
Lemma \ref{lem:73}.  Note that the hypothesis of $\mu$ being
tracial immediately implies that $\uPhi$ is tracial as well. 
On the other hand, Lemma \ref{lem:74} gives us the fact 
that $\uPhi ' = \Deltastar ( \uBeta_{\uChi} )$.
Theorem \ref{thm:17} then applies to 
$\uPhi, \uChi, \uPhi '$, which leads to a formula relating
the infinitesimal free cumulants $\kappa_{n ; ( \uPhi , \uPhi ')}$ 
to the c-free cumulants $\kappa_{n ; ( \uPhi , \uChi )}^{(c)}$.
We are only left to write that, for every $n \in \bN$ and
$i_1, \ldots , i_n \in \{ 1, \ldots , k \}$:
\begin{align*}
\kappa_{n ; ( \mu ,  \mu ')} '
(X_{i_1}, \ldots , X_{i_n})  
& = \kappa_{n ; ( \uPhi , \uPhi ')} '
(e_{i_1}, \ldots , e_{i_n}) 
\mbox{ (by Lemma \ref{lem:74}(4)) }                       \\
& = \sum_{m=1}^n \kappa^{(c)}_{n+1 ; ( \uPhi , \uChi )} 
(e_{i_m}, \ldots , e_{i_n}, e_{i_1}, \ldots , e_{i_m}) 
\mbox{ (by Theorem \ref{thm:17})}                         \\
& = \sum_{m=1}^n \kappa^{(c)}_{n+1 ; ( \mu , \nu )}
(X_{i_m}, \ldots , X_{i_n}, X_{i_1}, \ldots , X_{i_m}),
\end{align*}
where at the latter equality we used
Lemma \ref{lem:74}(3).
\hfill  $\square$
\end{ad-hoc}

$\ $

\section{c-free and infinitesimally free
products and additive convolutions}

In this section we continue with the framework of algebraic 
distributions in $\Dalg (k)$ and $\Dalg ' (k)$, and with 
the various families of cumulants associated to such 
distributions, with 
notations as in (\ref{eqn:72b}) of the preceding section.
We will review the variations of the notions of free product 
and free additive convolution that are relevant for the 
present paper, and we will show how Theorems \ref{thm:12} 
and \ref{thm:13} follow from the formula about cumulants 
obtained in Theorem \ref{thm:14}.  
Since the latter formula only addresses cumulants 
for tuples of the special form $( X_{i_1}, \ldots , X_{i_n} )$ 
(rather than covering cumulants for 
tuples $(P_1, \ldots , P_n)$ with general 
$P_1, \ldots , P_n \in \bC \langle X_1, \ldots , X_k \rangle$)
it is useful to start by recording the following lemma.

$\ $

\begin{lemma}   \label{lem:81}
Let $k$ be a positive integer. 

\vspace{6pt}
 
(1) Let $\mu_1, \mu_2$ be in $\Dalg (k)$, and suppose that 
the free cumulant functionals of $\mu_1$ agree with those of 
$\mu_2$ on all tuples
$\{ ( X_{i_1}, \ldots , X_{i_n} ) : n \in \bN, 
1 \leq i_1, \ldots , i_n \leq k \}$.  Then $\mu_1 = \mu_2$

\vspace{6pt}
 
(2) Let $\mu, \nu_1, \nu_2$ be in $\Dalg (k)$, and suppose 
that the c-free cumulants of $( \mu , \nu_1 )$ agree with those 
of $( \mu , \nu_2 )$ on all tuples 
$\{ ( X_{i_1}, \ldots , X_{i_n} ) : n \in \bN, 
1 \leq i_1, \ldots , i_n \leq k \}$.  Then $\nu_1 = \nu_2$.

\vspace{6pt}
 
(3) Let $\mu \in \Dalg (k)$ and $\mu_1', \mu_2' \in \Dalg ' (k)$,
and suppose the infinitesimal free cumulants of 
$( \mu , \mu_1' )$ agree with those of $( \mu , \mu_2' )$ on 
all tuples $\{ ( X_{i_1}, \ldots , X_{i_n} ) : n \in \bN, 
1 \leq i_1, \ldots , i_n \leq k \}$.  Then $\mu_1 ' = \mu_2 '$.
\end{lemma}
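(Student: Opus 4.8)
The plan is to reduce all three parts to one observation: a linear functional on $\bC \langle X_1, \ldots, X_k \rangle$ is determined by its values on the monomials $X_{i_1} \cdots X_{i_n}$, and each of the moment--cumulant formulas recalled in Section 3 expresses such a value purely in terms of cumulants evaluated on sub-tuples of $(X_{i_1}, \ldots, X_{i_n})$, which are themselves tuples of the variables $X_1, \ldots, X_k$. In particular the right-hand sides of (\ref{eqn:31a}), (\ref{eqn:33a}) and (\ref{eqn:34a}) never reintroduce lower moments of $\mu$, $\nu$ or $\mu'$ --- only the relevant families of cumulants --- so a single, non-inductive pass suffices.

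For part (1), fix $n$ and $i_1, \ldots, i_n \in \{ 1, \ldots, k \}$. Formula (\ref{eqn:31a}) applied to $(X_{i_1}, \ldots, X_{i_n})$ writes $\mu_j(X_{i_1} \cdots X_{i_n})$ as a sum over $\pi \in NC(n)$ of products of factors $\kappa_{|V|; \mu_j}\bigl( (X_{i_1}, \ldots, X_{i_n}) \mid V \bigr)$. Every such sub-tuple is again of the form $(X_{j_1}, \ldots, X_{j_m})$, so by hypothesis these factors coincide for $j = 1$ and $j = 2$; hence $\mu_1$ and $\mu_2$ agree on every monomial, and since $\mu_1(1) = \mu_2(1) = 1$ we conclude $\mu_1 = \mu_2$.

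For part (2), note first that the free cumulants $\kappa_{n; \mu}$ depend only on $\mu$, hence are the same in the instances of (\ref{eqn:33a}) for $(\mu, \nu_1)$ and for $(\mu, \nu_2)$. Applying (\ref{eqn:33a}) to $(X_{i_1}, \ldots, X_{i_n})$ writes $\nu_j(X_{i_1} \cdots X_{i_n})$ as a sum over $\pi \in NC(n)$ of products of factors $\kappa_{|V|; \mu}(\cdots)$ over inner blocks $V$ and factors $\kappa^{(c)}_{|W|; (\mu, \nu_j)}(\cdots)$ over outer blocks $W$, all evaluated on sub-tuples of the variables $X_1, \ldots, X_k$; the first kind of factor is common to $j = 1$ and $j = 2$, and the second kind coincides by hypothesis. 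Thus $\nu_1$ and $\nu_2$ agree on every monomial, so $\nu_1 = \nu_2$. Part (3) is the same argument with (\ref{eqn:34a}) in place of (\ref{eqn:33a}): there the double sum over $\pi \in NC(n)$ and $V_o \in \pi$ involves only the free cumulants $\kappa_{n; \mu}$ (common) and the infinitesimal free cumulants $\kappa'_{n; (\mu, \mu'_j)}$ (equal by hypothesis), again on sub-tuples of $X_1, \ldots, X_k$; hence $\mu'_1$ and $\mu'_2$ agree on every monomial, and together with $\mu'_1(1) = \mu'_2(1) = 0$ this gives $\mu'_1 = \mu'_2$.

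There is essentially no obstacle here; the one point worth stating explicitly is that (\ref{eqn:31a}), (\ref{eqn:33a}) and (\ref{eqn:34a}) genuinely recover moments from cumulants, their right-hand sides containing no hidden occurrence of a lower moment of the functional being reconstructed --- this is exactly what legitimizes the single pass and the restriction to tuples of $X_1, \ldots, X_k$.
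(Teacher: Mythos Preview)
Your proof is correct and follows essentially the same approach as the paper: in each part you invoke the appropriate moment--cumulant formula ((\ref{eqn:31a}), (\ref{eqn:33a}), (\ref{eqn:34a})) on tuples $(X_{i_1},\ldots,X_{i_n})$, observe that every sub-tuple appearing on the right-hand side is again a tuple of variables, and conclude that the moments on monomials agree. The paper's proof is briefer but makes exactly the same argument, spelling out only part (3) as a representative case.
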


\begin{proof}  Each of the three parts of the lemma follows by 
an immediate application of the suitable moment-cumulant 
formula.  For instance: in the case of statement (3), the 
formula (\ref{eqn:34a}) used in the definition of infinitesimal 
free cumulants gives us that both 
$\mu_1 ' ( X_{i_1} \cdots X_{i_n} )$ and
$\mu_2 ' ( X_{i_1} \cdots X_{i_n} )$ are equal to
\[
\sum_{\pi \in NC(n)} \sum_{V_o \in \pi} 
\kappa_{ | V_o | } '
\bigl( \, (X_{i_1}, \ldots , X_{i_n}) \mid V_o \bigr)
\cdot  \prod_{  \begin{array}{c}
{\scriptstyle V \in \pi}  \\
{\scriptstyle V \neq V_o}
\end{array} } \ \kappa_{ |V|} 
( \, (X_{i_1}, \ldots , X_{i_n}) \mid V ), \\
\]
where the $\kappa_{|V|}$s are free cumulant of $\mu$,
while $\kappa_{ | V_o | } '
\bigl( \, (X_{i_1}, \ldots , X_{i_n}) \mid V_o \bigr)$
is the common value of the infinitesimal free 
cumulants of $( \mu , \mu_1 ')$ and 
$( \mu , \mu_2 ')$ on the tuple 
$( X_{i_1}, \ldots , X_{i_n} ) \mid V_o$.  Thus $\mu_1 '$
and $\mu_2 '$ agree on all monomials 
$X_{i_1} \cdots X_{i_n}$ with  $n \in \bN$ and 
$i_1, \ldots , i_n \in \{ 1, \ldots , k \}$, 
and it follows that $\mu_1' = \mu_2'$, as required.
\end{proof}

$\ $

\subsection{Free product operations and the proof 
of Theorem \ref{thm:13}}

$\ $

\noindent
In this subsection we review the necessary extensions of the notion
of free product of distributions, and we show how Theorem \ref{thm:13}
follows from Theorem \ref{thm:14}.

For our purposes, it is most convenient to treat all versions of 
free products in terms of cumulants.  This starts with the standard 
free product operation, as described for instance in Lecture 6 of 
\cite{NiSp2006}.  Indeed, the free product $\mu_1 \star \mu_2$ of 
two distributions $\mu_1 \in \Dalg (k)$ and 
$\mu_2 \in \Dalg ( \ell )$ (for some $k, \ell \in \bN$) can be 
identified as the distribution 
$\widetilde{\mu} \in \Dalg ( k + \ell )$ which is uniquely 
determined by the requirement that its free cumulants fulfill 
the following condition: for every $n \in \bN$ and 
$i_1, \ldots , i_n \in \{ 1, \ldots , k + \ell \}$, one has
\begin{equation}   \label{eqn:8a}
\kappa_{n ; \widetilde{\mu}} 
(X_{i_1}, \ldots , X_{i_n}) = 
\left\{   \begin{array}{ll}
\kappa_{n; \mu_1} (X_{i_1}, \ldots , X_{i_n}), 
& \mbox{ if $i_1, \ldots , i_n \leq k$ }             \\
\kappa_{n; \mu_2} (X_{i_1 -k}, \ldots , X_{i_n -k}), 
& \mbox{ if $k < i_1, \ldots , i_n \leq k + \ell$ }  \\
0,  & \mbox{ otherwise.}
\end{array}  \right.
\end{equation}
For the proof of the fact that $\widetilde{\mu} := \mu_1 \star \mu_2$
satisfies the conditions (\ref{eqn:8a}), we refer to Lecture 11 of 
\cite{NiSp2006}.  The uniqueness of a distribution which satisfies 
(\ref{eqn:8a}) follows directly from Lemma \ref{lem:81}(1).
Let us also note here that when the formulas connecting moments to 
free cumulants are used in conjunction to Equation (\ref{eqn:8a}), 
it follows in particular that 
\begin{equation}   \label{eqn:8b}
\mu_1 \star \mu_2 \mid 
\bC \langle X_1, \ldots , X_k \rangle = \mu_1,
\end{equation}
and also that $\mu_1 \star \mu_2 \mid 
\bC \langle X_{k+1}, \ldots , X_{k + \ell} \rangle$ is obtained 
from $\mu_2$ via the relabelling $X_i \mapsto X_{k+i}$, 
$1 \leq i \leq \ell$.

The versions of free product operations used for c-free and for
infinitesimal free independence can be identified by using the 
same blueprint as above.  More precisely, we can proceed as 
follows.

\vspace{10pt}

\begin{prop-and-def}   \label{prop:82}
Let $k, \ell$ be positive integers.

\vspace{6pt}

(1) Let $\mu_1, \nu_1 \in \Dalg (k)$ and 
$\mu_2, \nu_2 \in \Dalg ( \ell )$.  We denote 
$\widetilde{\mu} := \mu_1 \star \mu_2 \in \Dalg (k + \ell )$.
There exists a distribution $\widetilde{\nu} \in \Dalg ( k + \ell )$, 
uniquely determined, such that the c-free cumulants of 
$( \widetilde{\mu}, \widetilde{\nu} )$ fulfill the 
following condition: for every $n \in \bN$ and 
$i_1, \ldots , i_n \in \{ 1, \ldots , k + \ell \}$, one has
\begin{equation}   \label{eqn:82a}
\kappa_{n ; ( \widetilde{\mu} , \widetilde{\nu} )}^{(c)} 
(X_{i_1}, \ldots , X_{i_n}) = 
\left\{   \begin{array}{ll}
\kappa_{n; ( \mu_1, \nu_1)}^{(c)} (X_{i_1}, \ldots , X_{i_n}),
& \mbox{ if $i_1, \ldots , i_n \leq k$ }             \\
\kappa_{n; ( \mu_2, \nu_2 )}^{(c)}
(X_{i_1 -k}, \ldots , X_{i_n -k}), 
& \mbox{ if $k < i_1, \ldots , i_n \leq k + \ell$ }  \\
0,  & \mbox{ otherwise.}
\end{array}  \right.
\end{equation}
The couple $( \widetilde{\mu} , \widetilde{\nu} )$ is called 
the {\em c-free product} of the couples $( \mu_1, \nu_1 )$ and 
$( \mu_2 , \nu_2 )$; we will use for it the notation
\begin{equation}  \label{eqn:82b}
( \widetilde{\mu} , \widetilde{\nu} )
= ( \mu_1 , \nu_1 ) \freeprodc ( \mu_2 , \nu_2 ).
\end{equation}

\vspace{6pt}

(2) Let $\mu_1 \in \Dalg (k)$, $\mu_1 ' \in \Dalg ' (k)$ and 
$\mu_2 \in \Dalg ( \ell )$, $\mu_2 ' \in \Dalg ' ( \ell )$.
We denote $\widetilde{\mu} := \mu_1 \star \mu_2 \in \Dalg (k + \ell )$.
There exists a distribution
$\widetilde{\mu} ' \in \Dalg ' ( k + \ell )$, 
uniquely determined, such that the infinitesimal free cumulants of 
$( \widetilde{\mu}, \widetilde{\mu} ' )$ fulfill the 
following condition: for every $n \in \bN$ and 
$i_1, \ldots , i_n \in \{ 1, \ldots , k + \ell \}$, one has
\begin{equation}   \label{eqn:82c}
\kappa_{n ; ( \widetilde{\mu} , \widetilde{\mu} ' )} '
(X_{i_1}, \ldots , X_{i_n}) = 
\left\{   \begin{array}{ll}
\kappa_{n; ( \mu_1, \mu_1 ')} ' (X_{i_1}, \ldots , X_{i_n}), 
& \mbox{ if $i_1, \ldots , i_n \leq k$ }             \\
\kappa_{n; ( \mu_2, \mu_2 ' )} '
(X_{i_1 -k}, \ldots , X_{i_n -k}), 
& \mbox{ if $k < i_1, \ldots , i_n \leq k + \ell$ }  \\
0,  & \mbox{ otherwise.}
\end{array}  \right.
\end{equation}
The couple $( \widetilde{\mu} , \widetilde{\mu} ')$ is called 
the {\em infinitesimal free product} of the couples 
$( \mu_1, \mu_1 ' )$ and $( \mu_2 , \mu_2 ' )$; we will use 
for it the notation
\begin{equation}   \label{eqn:82d}
( \widetilde{\mu} , \widetilde{\mu} ' ) 
= ( \mu_1 , \mu_1 ' ) \freeprodB ( \mu_2 , \mu_2 ' ).
\end{equation}
\end{prop-and-def}

\begin{proof}  (1) The existence of $\widetilde{\nu}$ is a 
special case of the general result about free product of 
C-ncps spaces proved in \cite{BoLeSp1996}.  Uniqueness follows 
from Lemma \ref{lem:81}(2).

(2) The existence of $\widetilde{\mu} '$ is a special case of 
the general construction of a free product of I-ncps spaces which 
is e.g. discussed in Section 2 of \cite{FeNi2010}.  Uniqueness 
follows from Lemma \ref{lem:81}(3).
\end{proof}

\begin{remark}   \label{rem:83}
(1) Analogously to what we had for $\mu_1 \star \mu_2$ in
Equation (\ref{eqn:8b}), the cumulant relations stated in 
Proposition \ref{prop:82}(1) imply in particular that 
$\widetilde{\nu} \mid \bC \langle X_1, \ldots , X_k \rangle = \nu_1$, 
and those from Proposition \ref{prop:82}(2) imply that 
$\widetilde{\mu} ' \mid \bC \langle X_1, \ldots , X_k \rangle = \mu_1 '$.  
Similar relations (modulo relabelling of $X_1, \ldots , X_{\ell}$ as
$X_{k+1}, \ldots , X_{k + \ell}$) hold in connection to the 
restrictions of $\widetilde{\nu}$ and $\widetilde{\mu} '$ to 
$\bC \langle X_{k+1}, \ldots , X_{k+ \ell} \rangle$.

\vspace{6pt}

(2) In Proposition \ref{prop:82} it was sufficient to prescribe
how the cumulant functionals
$\kappa_{n ; ( \widetilde{\mu} , \widetilde{\nu} )}^{(c)}$ and
$\kappa_{n ; ( \widetilde{\mu} , \widetilde{\mu} ' )} '$
act on tuples $(X_{i_1}, \ldots , X_{i_n} )$.  The action of
these functionals on general tuples $(P_1, \ldots , P_n) \in 
\Bigl( \, \bC \langle X_1, \ldots , X_k \rangle \, \Bigr)^n$
could also be explicitly described, if needed, by reducing 
(via multilinearity) to the case when $P_1, \ldots , P_n$ are 
monomials, and then by invoking the suitable formulas for 
cumulants with products as entries.  In order to get the latter
formulas for functionals
$\kappa_{n ; ( \widetilde{\mu} , \widetilde{\nu} )}^{(c)}$
one can combine Theorem 2.6 of \cite{Ca2000} with Proposition 
\ref{prop:54} of the present paper, while for functionals
$\kappa_{n ; ( \widetilde{\mu} , \widetilde{\mu} ' )} '$
one can use Propositions 3.15 and 4.3 from \cite{FeNi2010}.
\end{remark}

\begin{ad-hoc}
{\bf Proof of Theorem \ref{thm:13}.}
Recall that we have the following data: $k, \ell$ are positive integers
and we are given distributions $\mu_1, \nu_1  \in \Dalg (k)$,
$\mu_2, \nu_2 \in \Dalg ( \ell )$, such that $\mu_1,\mu_2$ are tracial.
We consider the free products
\begin{equation}   \label{eqn:84a}
( \mu_1 , \nu_1 ) \freeprodc ( \mu_2 , \nu_2 ) 
= ( \widetilde{\mu} , \widetilde{\nu} )
\in \Dalg ( k + \ell ) \times \Dalg ( k + \ell ), \mbox{ and}
\end{equation}
\begin{equation}   \label{eqn:84b}
( \mu_1 , \Psi_k ( \nu_1 ) ) \freeprodB 
( \mu_2 , \Psi_{\ell} ( \nu_2 ) ) 
= ( \widetilde{\mu} , \widetilde{\mu} ' )
\in \Dalg ( k + \ell ) \times \Dalg ' ( k + \ell ), 
\end{equation}
where the common $\widetilde{\mu}$ appearing in (\ref{eqn:84a}) and
(\ref{eqn:84b}) is the free product $\mu_1 \star \mu_2$.  
We have to prove that
$\Psi_{k + \ell} ( \widetilde{\nu} ) =  
\widetilde{\mu} '$.  In view of Lemma \ref{lem:81}(3), it will suffice
to verify that the equality
\begin{equation}   \label{eqn:84c}
\kappa_{n; 
( \widetilde{\mu} , \Psi_{k + \ell} ( \widetilde{\nu} ) )} '
(X_{i_1} , \ldots , X_{i_n}) =  
\kappa_{n; ( \widetilde{\mu} , \widetilde{\mu} ' )} '
(X_{i_1} , \ldots , X_{i_n})
\end{equation}
holds for all $n \in \bN$
and $1 \leq i_1, \ldots , i_n \leq k + \ell$.

For the verification of (\ref{eqn:84c}) we distinguish
three cases:

\vspace{6pt}

\noindent
{\em Case 1.} All of $i_1, \ldots , i_n$ are in 
$\{ 1, \ldots , k \}$.

\vspace{6pt}

\noindent
{\em Case 2.} All of $i_1, \ldots , i_n$ are in 
$\{ k+1, \ldots , k + \ell \}$.

\vspace{6pt}

\noindent
{\em Case 3.} We are not in Case 1 or Case 2.

\vspace{6pt}

In Case 1, the needed verification goes as follows:
\[
\kappa_{n; 
( \widetilde{\mu} , \Psi_{k + \ell} ( \widetilde{\nu} ) )} '
(X_{i_1} , \ldots , X_{i_n}) 
\]
\[
= \sum_{m=1}^n
\kappa^{(c)}_{n+1 ; ( \widetilde{\mu} , \widetilde{\nu} )}
(X_{i_m} , \ldots , X_{i_n}, X_{i_1}, \ldots , X_{i_m}) 
\ \ \mbox{ (by Theorem \ref{thm:14})}
\]
\[
= \sum_{m=1}^n
\kappa^{(c)}_{n+1 ; ( \mu_1 , \nu_1 )}
(X_{i_m} , \ldots , X_{i_n}, X_{i_1}, \ldots , X_{i_m}) 
\ \ \mbox{ (by Proposition \ref{prop:82}(1))}
\]
\[
= \kappa_{n ; ( \mu_1, \Psi_k ( \nu_1 ) )} '
(X_{i_1} , \ldots , X_{i_n}) 
\ \ \mbox{ (by Theorem \ref{thm:14})}
\]
\[
= \kappa_{n; ( \widetilde{\mu} , \widetilde{\mu} ' )} '
(X_{i_1} , \ldots , X_{i_n}),
\]
where at the latter equality we invoke Proposition 
\ref{prop:82}(2) in connection to how 
$( \widetilde{\mu} , \widetilde{\mu}  ' )$
is defined in (\ref{eqn:84b}).

\vspace{6pt}

The verifications of Cases 2 and 3 are analogous, where 
in Case 2 we must at some point shift the indices to
$i_1 -k , \ldots , i_n - k \in \{ 1, \ldots , \ell \}$, 
while in Case 3 both sides of the required equality 
(\ref{eqn:84c}) come out as equal to $0$.
\hfill  $\square$
\end{ad-hoc}

$\ $

We note that, when expressing the content of 
Theorem \ref{thm:13} directly in terms of the notions 
of c-free and infinitesimally free independence, one gets 
the following statement.

\begin{corollary}   \label{cor:85}
Let $k, \ell$ be positive integers, consider the algebra 
of polynomials $\cA =$
$ \bC \langle X_1, \ldots , X_{k+ \ell} \rangle$ and its 
subalgebras 
\[
\cA_1 = \bC \langle X_1, \ldots , X_k \rangle, 
\ \ \cA_2 = \bC \langle X_{k+1}, \ldots , X_{k + \ell} \rangle. 
\]
Let $\widetilde{\mu}, \widetilde{\nu}$ be in $\Dalg (k + \ell )$
and let $\widetilde{\mu} ' = \Psi_{k+ \ell} ( \widetilde{\nu} )
\in \Dalg ' (k + \ell )$.  If the subalgebras $\cA_1$ and $\cA_2$ 
are c-freely independent in the C-ncps 
$( \cA , \widetilde{\mu} , \widetilde{\nu} )$, then it follows 
that they are infinitesimally free independent in the I-ncps
$( \cA , \widetilde{\mu} , \widetilde{\mu} ' )$.
\end{corollary}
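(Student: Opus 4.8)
The plan is to translate both the hypothesis and the conclusion into statements about (c-free, respectively infinitesimal free) products of the restricted distributions, and then to invoke Theorem~\ref{thm:13}. Write $\mu_1,\nu_1$ for the restrictions $\widetilde\mu\mid\cA_1$, $\widetilde\nu\mid\cA_1$, viewed as elements of $\Dalg(k)$, and $\mu_2,\nu_2\in\Dalg(\ell)$ for the restrictions to $\cA_2$ (after the relabelling $X_{k+i}\mapsto X_i$). Being restrictions of the tracial $\widetilde\mu$, the distributions $\mu_1,\mu_2$ are tracial, which is what is needed to apply Theorem~\ref{thm:13}; in particular the corollary should be read with $\widetilde\mu$ assumed tracial.

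The core of the argument is a dictionary lemma: for $\widetilde\mu,\widetilde\nu\in\Dalg(k+\ell)$ the subalgebras $\cA_1,\cA_2$ are c-freely independent in $(\cA,\widetilde\mu,\widetilde\nu)$ if and only if $(\widetilde\mu,\widetilde\nu)=(\mu_1,\nu_1)\freeprodc(\mu_2,\nu_2)$, and similarly $\cA_1,\cA_2$ are infinitesimally free in $(\cA,\widetilde\mu,\widetilde\mu')$ for $\widetilde\mu'\in\Dalg'(k+\ell)$ if and only if $(\widetilde\mu,\widetilde\mu')$ is the infinitesimal free product of its restrictions. The ``$\Leftarrow$'' directions are read off the defining cumulant prescriptions \eqref{eqn:82a} and \eqref{eqn:82c}: the case $i_1,\dots,i_n\le k$ of \eqref{eqn:8a} applied to $\widetilde\mu=\mu_1\star\mu_2$, together with its mirror for $\cA_2$, gives free independence of $\cA_1,\cA_2$ with respect to $\widetilde\mu$, while the ``otherwise'' lines of \eqref{eqn:82a}, \eqref{eqn:82c} give the vanishing of all mixed $\kappa^{(c)}$- (resp. $\kappa'$-) cumulants with entries alternately from $\cA_1$ and $\cA_2$ — and these two facts together are precisely c-free (resp. infinitesimal free) independence. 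For ``$\Rightarrow$'': c-free independence already includes free independence of $\cA_1,\cA_2$ with respect to $\widetilde\mu$, hence $\widetilde\mu=\mu_1\star\mu_2$ and the free cumulants of $\widetilde\mu$ on any tuple $(X_{i_1},\dots,X_{i_n})$ with all $i_j\le k$ coincide with those of $\mu_1$; an induction on $n$ from the defining relation \eqref{eqn:33a} (isolating the term indexed by $1_n$, and using that the moments of $\widetilde\nu$ on such tuples are the moments of $\nu_1$) then shows that $\kappa^{(c)}_{n;(\widetilde\mu,\widetilde\nu)}$ restricted to $\cA_1$-tuples equals $\kappa^{(c)}_{n;(\mu_1,\nu_1)}$, and symmetrically for $\cA_2$; combined with the vanishing of mixed $\kappa^{(c)}$-cumulants this is exactly the prescription \eqref{eqn:82a}, so $(\widetilde\mu,\widetilde\nu)=(\mu_1,\nu_1)\freeprodc(\mu_2,\nu_2)$ by the uniqueness part of Proposition and Definition~\ref{prop:82}(1). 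The infinitesimal case is identical, with \eqref{eqn:34a} in place of \eqref{eqn:33a}.

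With the lemma available the proof is a short chain: c-free independence of $\cA_1,\cA_2$ in $(\cA,\widetilde\mu,\widetilde\nu)$ gives $(\widetilde\mu,\widetilde\nu)=(\mu_1,\nu_1)\freeprodc(\mu_2,\nu_2)$; Theorem~\ref{thm:13} then yields $(\mu_1,\Psi_k(\nu_1))\freeprodB(\mu_2,\Psi_\ell(\nu_2))=(\widetilde\mu,\Psi_{k+\ell}(\widetilde\nu))=(\widetilde\mu,\widetilde\mu')$; and the infinitesimal direction of the dictionary lemma turns this last identity back into the statement that $\cA_1,\cA_2$ are infinitesimally free in $(\cA,\widetilde\mu,\widetilde\mu')$, as required.

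I expect the only genuine work to be the ``$\Rightarrow$'' half of the dictionary lemma — the inductive verification that the diagonal c-free (resp. infinitesimal free) cumulants of the ambient triple are computed from the restrictions alone — where one must keep track of the fact that c-free independence already supplies free independence with respect to $\widetilde\mu$, so that the free-cumulant factors on the right-hand side of \eqref{eqn:33a} behave correctly. Everything else is a matter of unwinding definitions and citing Theorem~\ref{thm:13} and Proposition and Definition~\ref{prop:82}.
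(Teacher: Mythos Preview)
Your proof is correct and follows the same route as the paper's: translate c-free independence into the c-free product identity $(\widetilde\mu,\widetilde\nu)=(\mu_1,\nu_1)\freeprodc(\mu_2,\nu_2)$, invoke Theorem~\ref{thm:13}, and translate the resulting infinitesimal free product identity back into infinitesimal free independence --- you simply spell out the dictionary step (your ``lemma'') in more detail than the paper, which just asserts it. Your remark that $\widetilde\mu$ must be tracial for Theorem~\ref{thm:13} to apply is a valid catch: the paper's statement of the corollary omits this hypothesis.
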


\begin{proof}
Let $\mu_1 , \nu_1 \in \Dalg (k)$ be the restrictions of 
$\widetilde{\mu}, \widetilde{\nu}$ to $\cA_1$, and let 
$\mu_2 , \nu_2 \in \Dalg ( \ell )$ be obtained from the 
restrictions of $\widetilde{\mu}, \widetilde{\nu}$ to $\cA_2$ 
by the natural shift of indices (for instance 
$\mu_2 (X_{i_1} \cdots X_{i_n}) :=
\widetilde{\mu} (X_{i_1 + k} \cdots X_{i_n + k})$ for every
$n \in \bN$ and $1 \leq i_1, \ldots , i_n \leq \ell$).
The hypothesis that $\cA_1$ and $\cA_2$ 
are c-freely independent in the C-ncps 
$( \cA , \widetilde{\mu} , \widetilde{\nu} )$ amounts 
to the fact that 
$( \widetilde{\mu} , \widetilde{\nu} ) =
( \mu_1 , \nu_1 ) \freeprodc ( \mu_2 , \nu_2 )$.  But then 
Theorem \ref{thm:13} tells us that 
$( \mu_1 , \Psi_k ( \nu_1 ) ) \freeprodB 
( \mu_2 , \Psi_{\ell} ( \nu_2 ) ) 
= ( \widetilde{\mu} , \widetilde{\mu} ' )$, and the latter 
equation converts into the conclusion about 
the infinitesimal free independence of $\cA_1$ and $\cA_2$ 
in the I-ncps $( \cA , \widetilde{\mu} , \widetilde{\mu} ' )$.
\end{proof}

$\ $

\subsection{ Operations of free additive convolution
and the proof of Theorem \ref{thm:12}}

$\ $

\noindent
For every $k \in \bN$ one has an operation $\boxplus$ of 
free additive convolution on $\Dalg (k)$, which reflects
the operation of adding two freely independent $k$-tuples 
of elements in a noncommutative probability space.  When 
going to free cumulants, the definition of $\boxplus$ 
takes the form of a ``linearization'' property: for 
$\mu_1, \mu_2 \in \Dalg (k)$, one has
\begin{equation}   \label{eqn:8c}
\kappa_{n; \mu_1 \boxplus \mu_2}
= \kappa_{n; \mu_1} + \kappa_{n; \mu_2}
\end{equation}
(equality of multilinear functionals on 
$\bC \langle X_1, \ldots , X_k \rangle^n$), 
holding for every $n \in \bN$ -- see e.g Lecture 16 in 
\cite{NiSp2006}.  In a context, such as the one of the present 
paper, where cumulants are the primary object of the discussion, 
the linearization property from Equation (\ref{eqn:8c}) can in 
fact be used in order to define the operation $\boxplus$.  The 
extensions of $\boxplus$ to c-free and to infinitesimal free 
probability can be approached in the same way, as indicated next.

$\ $

\begin{prop-and-def}   \label{prop:86}
Let $k$ be a positive integer.

\vspace{6pt}

(1) Let $\mu_1, \mu_2, \nu_1, \nu_2$ be in $\Dalg (k)$. 
We denote $\mu_1 \boxplus \mu_2 =: \mu \in \Dalg (k)$.
There exists a distribution $\nu \in \Dalg (k)$, 
uniquely determined, such that for every $n \in \bN$ 
and $i_1, \ldots , i_n \in \{ 1, \ldots , k \}$
one has
\begin{equation}   \label{eqn:86a}
\kappa_{n ; ( \mu, \nu )}^{(c)} 
(X_{i_1}, \ldots , X_{i_n} ) =
\kappa_{n ; ( \mu_1 , \nu_1 )}^{(c)} 
(X_{i_1}, \ldots , X_{i_n} ) 
+ \kappa_{n ; ( \mu_2 , \nu_2 )}^{(c)} 
(X_{i_1}, \ldots , X_{i_n} ).
\end{equation}
The couple $( \mu, \nu )$ is called 
the {\em c-free additive convolution} of the couples 
$( \mu_1, \nu_1 )$ and 
$( \mu_2 , \nu_2 )$; we will use for it the notation
\begin{equation}  \label{eqn:86b}
( \mu, \nu ) = ( \mu_1 , \nu_1 ) \boxplusc ( \mu_2 , \nu_2 ).
\end{equation}

\vspace{6pt}

(2) Let $\mu_1, \mu_2$ be in $\Dalg (k)$ and let 
$\mu_1 ' \mu_2 '$ be in $\Dalg ' (k)$.
We denote $\mu_1 \boxplus \mu_2 =: \mu \in \Dalg (k)$.
There exists a distribution $\mu ' \in \Dalg ' (k)$, 
uniquely determined, such that for every $n \in \bN$ 
and $i_1, \ldots , i_n \in \{ 1, \ldots , k \}$
one has
\begin{equation}   \label{eqn:86c}
\kappa_{n ; ( \mu, \mu ')} ' (X_{i_1}, \ldots , X_{i_n} ) 
= \kappa_{n ; ( \mu_1 , \mu_1 ' )} '
(X_{i_1}, \ldots , X_{i_n} ) 
+ \kappa_{n ; ( \mu_2 , \mu_2 ' )} '
(X_{i_1}, \ldots , X_{i_n} ).
\end{equation}
The couple $( \mu, \mu ')$ is called 
the {\em infinitesimally free additive convolution} of the 
couples $( \mu_1, \nu_1 )$ and 
$( \mu_2 , \nu_2 )$; we will use for it the notation
\begin{equation}  \label{eqn:86d}
( \mu, \mu ' ) = ( \mu_1 , \mu_1 ' ) 
\boxplusB ( \mu_2 , \mu_2 ' ).
\end{equation}
\end{prop-and-def}

\begin{proof}  The existence statement in both (1) and (2) of this 
proposition follow, by a standard argument, from the corresponding
parts of Proposition \ref{prop:82}.  (For instance in part (1),
one considers the free product 
$( \widetilde{\mu} , \widetilde{\nu} ) = 
( \mu_1 , \nu_1 ) \freeprodc ( \mu_2 , \nu_2 ) \in 
\Dalg (2k) \times \Dalg (2k)$,
and defines $\nu$ via the prescription that
$\nu (X_{i_1} \cdots X_{i_n}) =
\widetilde{\nu} \bigl( \, 
(X_{i_1}+X_{i_1 +k}) \cdots
(X_{i_n}+X_{i_n +k}) \, \bigr)$,
for all $n \in \bN$ and $1 \leq i_1 , \ldots , i_n \leq k$.)
The uniqueness statements concerning the required $\nu$ and $\mu '$ 
follow from parts (2) and respectively (3) of Lemma \ref{lem:81}.
\end{proof}

$\ $

\begin{ad-hoc}
{\bf Proof of Theorem \ref{thm:12}.}
Recall that we have the following data: $k$ is a positive integer
and $\mu_1, \nu_1, \mu_2, \nu_2$ are in $\Dalg (k)$, with 
$\mu_1, \mu_2$ tracial.  We denote 
\begin{equation}   \label{eqn:87a} 
( \mu_1 , \nu_1 ) \boxplusc 
( \mu_2 , \nu_2 ) = ( \mu , \nu ) 
\in \Dalg (k) \times \Dalg (k) \mbox{ and }
\end{equation}
\begin{equation}   \label{eqn:87b} 
( \mu_1 , \Psi_k ( \nu_1 ) ) \boxplusB 
( \mu_2 , \Psi_k ( \nu_2 ) ) 
= ( \mu , \mu ' ) 
\in \Dalg (k) \times \Dalg ' (k),
\end{equation}
where the common ``$\mu$'' appearing in (\ref{eqn:87a}) and in 
(\ref{eqn:87b}) is $\mu = \mu_1 \boxplus \mu_2$.
We have to prove that $\mu ' = \Psi_k ( \nu )$.  In view of 
Lemma \ref{lem:81}(3), it will suffice to verify that the 
infinitesimal free cumulants associated to $(\mu ,\mu')$ and to 
$(\mu , \Psi_{k}(\nu))$ agree on all tuples of the form 
$(X_{i_1}, \ldots , X_{i_n})$.  In order to do this verification, 
we can proceed as follows: we start from the equality
\begin{equation}   \label{eqn:87c} 
\kappa_{n; (\mu ,\mu')} ' (X_{i_1},\ldots,X_{i_n}) 
= \kappa_{n; (\mu_1 , \Psi_k ( \nu_1 )}' (X_{i_1},\ldots,X_{i_n})              
+ \kappa_{n; (\mu_2 , \Psi_k ( \nu_2 )}' (X_{i_1},\ldots,X_{i_n})
\end{equation}
(which holds by Equation (\ref{eqn:87b}) and the additivity 
property of $\boxplusB$), and we expand both cumulants on the 
right-hand side of (\ref{eqn:87c}) as sums ``$\sum_{m=1}^n$'' 
in the way indicated by Theorem \ref{thm:14}.  The sum of the 
two resulting ``$\sum_{m=1}^n$'' can be further processed by 
using the fact that  the functionals 
$\kappa_{n+1 ; (\mu , \nu)}^{(c)}$ and
$\kappa_{n+1 ; (\mu_1 , \nu_1)}^{(c)} +
\kappa_{n+1 ; (\mu_2 , \nu_2)}^{(c)}$ agree on tuples of the 
form $(X_{i_m}, \ldots , X_{i_n}, X_{i_1}, \ldots , X_{i_m})$,
with $1 \leq m \leq n$.  In this we arrive to the equality 
\[
\kappa_{n; (\mu ,\mu')} ' (X_{i_1},\ldots,X_{i_n}) 
= \sum_{m=1}^{n} \kappa_{n+1 ; (\mu , \nu)}^{(c)} 
(X_{i_m},\ldots,X_{i_n},X_{i_1},\ldots,X_{i_m});
\]
the latter sum is indeed equal to
$\kappa_{n ; (\mu , \Psi_{k}(\nu)) } ' (X_{i_1},\ldots,X_{i_n})$,
by another application of Theorem \ref{thm:14}.
\hfill  $\square$
\end{ad-hoc}

$\ $

\end{document}